\theoremstyle{plain}
\newtheorem{theorem}{Theorem}[section]
\newtheorem{lemma}[theorem]{Lemma}
\newtheorem{corollary}[theorem]{Corollary}
\newtheorem{definition}[theorem]{Definition}
\newtheorem{notation}[theorem]{Notation}
\newtheorem{conjecture}[theorem]{Conjecture}
\def\wpo{\mathbb{wpo}}
\def\X{\mathbb{X}}
\def\Y{\mathbb{Y}}
\def\Z{\mathbb{Z}}
\DeclareMathAlphabet{\roundT}{OMS}{cmsy}{m}{n}
\newcommand{\T}{\roundT{T}}
\title{An order-theoretic characterization of the Howard-Bachmann-hierarchy}
\author{Jeroen Van der Meeren\thanks{jeroen.vandermeeren@ugent.be, 
Department of Mathematics, Ghent University, Krijgslaan 281, 9000 Gent, Belgium}, Michael Rathjen\thanks{rathjen@maths.leeds.ac.uk, Department of Pure Mathematics, University of Leeds, Leeds LS2 9JT, England}, Andreas Weiermann\thanks{andreas.weiermann@ugent.be, Department of Mathematics, Ghent University, Krijgslaan 281, 9000 Gent, Belgium}
}
\date{}
\begin{document}

\maketitle
\thispagestyle{empty}

\begin{abstract}



In this article we provide an intrinsic characterization of the famous Ho\-ward-Bach\-mann ordinal in terms of a natural well-partial-ordering by showing that this ordinal can be realized as a maximal order type of a class of generalized trees with respect to a homeomorphic embeddability relation.
We use our calculations to draw some conclusions about some corresponding subsystems of second order arithmetic. All these subsystems deal with versions of light-face $\Pi^1_1$-comprehension.
\end{abstract}

\section{Introduction}\label{intro}

The famous Howard-Bachmann ordinal $\eta_0$ (which in the literatur is also denoted by $\psi\varepsilon_{\Omega+1}$, $\vartheta\varepsilon_{\Omega+1}$, $\theta\varepsilon_{\Omega+1}0$, $d\varepsilon_{\Omega+1}$) belongs to the most well-established arsenal of proof-theoretic ordinals of natural theories for developing significant parts of (impredicative) mathematics. Of course $\eta_0$ is much bigger than $\varepsilon_0$, the proof-theoretic ordinal of first order Peano arithmetic, and it is also bigger than $\Gamma_0$, the proof-theoretic ordinal of predicative analysis. The ordinal $\eta_0$ is the proof-theoretic ordinal of the first order theory $ID_1$, which extends $PA$ by schemes for smallest fixed points of non-iterated positive inductive definitions. The ordinal $\eta_0$ is also the proof-theoretic ordinal of the theory $KP\omega$ which formalizes an admissible universe containing $\omega$, and $\eta_0$ is also the proof theoretic ordinal of $ACA_0+(\Pi^1_1$--\,$CA)^-$ which formalizes lightface $\Pi^1_1$-comprehension and of the theory $RCA_0+(BI)$ which extends $RCA_0$ by a scheme of bar induction.

All these theories are considered to be impredicative. For example the theory $ACA_0+(\Pi^1_1$--\,$CA)^-$ allows the formation of new sets of natural numbers by using a comprehension formula which may contain a set quantifier ranging over the set which is just defined using the comprehension under consideration. An ordinal analysis for each of these impredicative theories turned out to be difficult. In technical terms this is usually reflected by proof calculi with rules where the complexity of the antecedent is greater than the complexity of the succedent. Typically this is a stumbling block for cut elimination and only very sophisticated methods like Buchholz' operator-controlled derivations can circumvent it.

It is important to notice that proof-theoretic ordinals are more than just shere set-theoretic ordinals. They usually come equipped with a first order structure with certain built-in functions which generate the ordinal in question in some natural and perspicuous way. On a more combinatorial level such an ordinal is then usually re\-presented by a certain primitive recursive set of terms with a primitive recursive well-ordering relation. A precise description on what the characteristics of a proof-theoretic ordinal are runs under the "natural well-ordering problem" which is known to be notoriously difficult. Investigations on proof-theoretic ordinals have been undertaken by many people. To name a few: Aczel, Arai, Bachmann, Bridges, Buchholz, Feferman, Gerben, Girard, Gordeev, Isles, J\"ager, Kino, Levitz, Okada, Pfeiffer, Pohlers, Probst, Rathjen, Sch\"utte, Setzer, Strahm, Takeuti, Veblen and Weiermann.

\medskip 

A very important aspect of investigations on proof-theoretic ordinals goes back to Diana Schmidt who first recognized the connection between these ordinals and order-theoretic properties of the functions generating them. She characterized completely the order types which could be generated from the ordinal $0$ by applying a monotonic increasing function. A monotonic increasing binary function generates out of the singleton set containing the ordinal $0$ no order type larger than $\varepsilon_0$. Functions of bigger arities produce easily ordinals bigger than $\Gamma_0$ and in fact ordinals of size comparable to the small Veblen ordinal $\vartheta \Omega^\omega$ but by no means an ordinal which is of size comparable to $\eta_0$.

Diana Schmidt moreover showed that studying bounds on closure ordinals can best be achieved by determining maximal order types of well-partial-orderings which reflect monotonicity properties of the functions in question. With regard to this research program she classified maximal order types for various classes of labelled trees. The ordinals obtained in this way are all around the small Veblen ordinal $\vartheta \Omega^\omega$ and way below $\eta_0$ and it was for some time not clear whether $\eta_0$ can be characterized in terms of closure ordinals.

In Weiermann \cite{boundsfortheclosureordinalsofessentiallymonotonicincreasingfunctions}, it was shown that $\eta_0$ could indeed be characterized as a closure ordinal of so-called essentially monotonic increasing functions. Since then it has been open whether a corresponding order-theoretic characterization in terms of maximal order types is possible. Weiermann's proof made essential use of the linearity of ordinals and did not generalize to partially ordered structures.

\medskip 

Extending Schmidt's work in \cite{ordertheoreticcharacterizationofschutteveblen}, the third author provided in a first step, an order-theoretic characterization for the large Veblen ordinal $\vartheta \Omega^\Omega$.
Quite recently, the authors of this paper were able to provide in \cite{WellpartialorderingsandthebigVeblennumber} much more convincing methods and results which were suitable for being extended to larger ordinals as well. This recent approach is already far reaching but still misses essential ingredients for a order-theoretic characterization of $\eta_0$. This paper will contribute to this problem.

\medskip

It should also be noticed that H. Friedman defined in 1985 tree-embeddability relations with a so-called gap-condition which generated ordinals of size $\psi\Omega_\omega$, an ordinal which is much bigger than $\eta_0$. So in principal it seemed plausible that it is possible to single out a natural subordering of Friedman's ordering which would match with $\eta_0$.
This paper provides also a positive answer to this challenge: the defined well-partial-order of maximal order type $\eta_0$ can be seen as a natural subordering of Friedman's tree-ordering.

Moreover, we believe that our analysis will be a starting point for classifying
the maximal order types of the full Friedman's gap-ordering on trees. We expect that analyzing Friedman's embeddability relations will be rather hard and difficult and we hope that the result provided in this paper
yields a roadmap for a more general result. We confine ourselves to $\eta_0$ since this ordinal is somehow the first serious step into impredicativity. To elaborate a little bit more on Friedman's gap-ordering: we believe that the maximal order type of Friedman's trees on $n$ labels can be described using the $n^{\text{th}}$ regular cardinal number $\Omega_n$. Hence, the uncountable cardinal numbers play a very important role in classifying the strength of Friedman's well-partial-order.

\medskip

In section \ref{sec:Preliminaries}, we give some preliminaries that are needed for later sections. In section \ref{sec:an order-theoretic approach of the Howard-Bachmann ordinal number}, we yield a well-partial-order of maximal order type $\eta_0$, that can be seen as a natural subordering of Friedman's tree-ordering. Section \ref{sec:Tree-structures below the Howard-Bachmann ordinal number} is needed to obtain the results in section \ref{sec:an order-theoretic approach of the Howard-Bachmann ordinal number}. In the last two sections, section \ref{sec:the proof-theoretical ordinals of RCA0 + Pi11CA^- and RCA0* + Pi11CA^-} and \ref{sec:independence results}, results from a pure proof-theoretical point of view are studied. We determine bounds on the proof-theoretical ordinals of theories corresponding to our well-partial-orders, from which we obtain unprovability results concerning the well-partial-orderedness of these partial orders. All these theories deal with versions of light-face $\Pi^1_1$-comprehension.



\section{Preliminaries}\label{sec:Preliminaries}
\subsection{Well-partial-orderings}\label{sec:Well-partial-orderings}

Well-partial-orderings are ordered structures that are used in different fields of mathematics. For example in Gr\"obnerbases \cite{Grobnerbases} and rewrite theory \cite{provingterminationfortermrewritingsystems}. Moreover, they are well-known objects among logicians. 
Well-partial-orderings can be seen as ge\-neralizations of well-orderings, an important notion used in ordinal analysis \cite{prooftheoryofimpredicativesubsystemsofanalysis,Pohlersboek1,Pohlersboek2}. They are the underlying concepts of the theorem of Higman \cite{higman}, the theorem of Kruskal \cite{Kruskal}, Fra\"iss\'e's order type conjecture \cite{laver} and Friedman's gap-embeddability relation on trees \cite{simpsonfinitetrees}. In \cite{rathjenweiermann}, the second and third author did a complete proof-theoretical analysis of the theorem of Kruskal.

\begin{definition} A \textbf{well-partial-ordering} (hereafter $\wpo$) is a partial ordering $(X,\leq_X)$ such that for every infinite sequence $(x_i)_{i=1}^{+\infty}$ of elements in $X$, there exists two indices $i$ and $j$ such that $i<j$ and $x_i \leq_X x_j$. We denote the $\wpo$ $(X,\leq_X)$ by $X$ if the ordering is clear from the context.
\end{definition}

So a $\wpo$ is a well-founded partial ordering that does not admit an infinite antichain. Equivalently, it is a partial order such that every extension is well-founded. In the literature, one is more familiar with the notion of a well-quasi-order. That is a quasi-order (no antisymmetry) with the same distinctive property. However, by canceling out an obvious equivalence relation, one gets a well-partial-order. Therefore, we can restrict ourselves only to $\wpo$'s. The interested reader can read more about well-partial-ordering in \cite{dejonghandparikh}, the most ground-breaking paper on this subject.

\begin{definition} The \textbf{maximal order type} of the $\wpo$ $(X,\leq_X)$ is defined as 
\begin{align*}
\text{$\sup\{\alpha$: }& \text{$\preceq$ is an extension of $\leq_X$, $\preceq$ is a well-ordering on $X$}\\
&\text{and $otype(X,\preceq)=\alpha\}$}.
\end{align*}
 We denote this ordinal as $o(X,\leq_X)$ or as $o(X)$ if the ordering on $X$ is clear from the context. 
\end{definition}

The maximal order type of a $\wpo$ is an important characteristic of that $\wpo$. E.g. one can use it in determining the exact proof-theoretical strength of the $\wpo$ under consideration. In \cite{dejonghandparikh}, it is proved that this supremum is actually a maximum, meaning that every $\wpo$ $X$ has at least one maximal linear extension.

\begin{theorem}[de Jongh and Parikh \cite{dejonghandparikh}]\label{dejonghandparikh1} Assume that $(X,\leq_X)$ is a $\wpo$. Then there exists a well-ordering $\preceq$ on $X$ which is an extension of $\leq_X$ and $otype(X,\preceq) = o(X,\leq_X)$.
\end{theorem}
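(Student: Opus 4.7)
The plan is to proceed by transfinite induction on the ordinal $\alpha := o(X,\leq_X)$, quantifying over all wpos with this maximal order type. The base case $\alpha = 0$ is trivial: $X$ must be empty, and the empty ordering is its own maximal extension. Before handling the inductive step, I would record the elementary facts that for any $x \in X$ the restriction of $\leq_X$ to $X \setminus \{x\}$ is again a wpo, that $o(X \setminus \{x\}) \leq o(X)$, and that the set of $\leq_X$-maximal elements is finite (an antichain in a wpo).

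For the successor case $\alpha = \beta+1$, I would use the fact that there exists a linear extension $\preceq'$ with $otype(X,\preceq') = \beta+1$. Its $\preceq'$-largest element $m$ is necessarily $\leq_X$-maximal, and the restriction of $\preceq'$ to $X \setminus \{m\}$ is a linear extension of order type $\beta$, so $o(X \setminus \{m\}) \geq \beta$. Conversely, if one had $o(X \setminus \{m\}) \geq \beta+1$, then appending $m$ on top of any such extension would produce a linear extension of $X$ of order type $\beta+2 > \alpha$, a contradiction. Hence $o(X \setminus \{m\}) = \beta$, and the induction hypothesis supplies a linear extension of $X \setminus \{m\}$ of type $\beta$; placing $m$ at the top yields the desired extension of $X$.

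The main obstacle is the limit case, where $\alpha$ is a limit ordinal. The successor argument breaks down precisely because inserting an element into a well-order of limit type need not increase the order type, so the contradiction used above evaporates: for every $\leq_X$-maximal $m$ one typically has $o(X \setminus \{m\}) = \alpha$ as well, and the induction hypothesis no longer applies. To overcome this, I would try to construct the extension directly by stages. Enumerating $X$ as $x_0, x_1, \ldots$ (countability follows from $o(X) < \aleph_1$, which holds whenever the maximal order type is reached by a supremum), one builds an increasing chain of finite linear orders $\preceq_n$ on $X_n := \{x_0,\dots,x_{n-1}\}$ extending $\leq_X \upharpoonright X_n$, while maintaining the invariant that $\preceq_n$ extends to some linear extension of $(X,\leq_X)$ of type cofinal in $\alpha$. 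A König-style compactness argument on the finitely branching tree of admissible partial orders provides a full linear extension $\preceq$ as a limit; a careful bookkeeping, choosing at stage $n$ a partial order that realises an extension of type at least $\beta_n$ for some fixed cofinal sequence $\beta_n \nearrow \alpha$, forces $otype(X,\preceq) \geq \alpha$, and together with $otype(X,\preceq) \leq o(X) = \alpha$ this gives equality.

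The hard part is verifying that the invariant can actually be preserved at every finite stage: one needs that, given a compatible $\preceq_n$ and any $\beta < \alpha$, some refinement to $X_{n+1}$ is still compatible with a linear extension of type $\geq \beta$. This is the technical heart of the argument and is what makes the limit case non-trivial; it relies essentially on the wpo property (not merely well-foundedness), since without the absence of infinite antichains one could get stuck having committed to a bad ordering on $X_n$.
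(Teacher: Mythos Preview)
The paper does not prove this theorem; it is cited from de Jongh and Parikh and used as a black box, so there is no in-paper argument to compare against. Your proposal, however, has a genuine gap in the limit case.

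First, the countability assumption is unjustified. You write that ``countability follows from $o(X) < \aleph_1$,'' but nothing in the hypothesis bounds $o(X)$ below $\aleph_1$: for instance $X = \omega_1 \sqcup \omega_1$ (disjoint union) is an uncountable wpo with $o(X) = \omega_1 \cdot 2$, a limit ordinal, and your enumeration argument cannot even start. Second, even for countable $X$ the K\"onig-type construction is incomplete: you explicitly defer the crucial step that the invariant can be maintained at each finite stage, and there is no evident reason why compatibility of $\preceq_n$ with \emph{some} extension of type $\geq \beta_n$ (with witnesses that may differ as $n$ grows) should produce in the limit a single extension of type $\geq \sup_n \beta_n = \alpha$. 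The standard de Jongh--Parikh argument avoids both problems by building the extension from below rather than from above: by transfinite recursion one removes a $\leq_X$-\emph{minimal} element of the remaining set chosen so that the maximal order type of what is left stays as large as possible (the minimal elements form a finite antichain, so this is a choice among finitely many options), and one shows directly that this greedy process enumerates $X$ in order type $o(X)$. No countability hypothesis and no successor/limit case split are needed.
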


In this paper, we are interested in studying the maximal order type of specific $\wpo$'s. The technique of reifications is very useful in obtaining upper bounds \cite{schuttesimpson} and lower bounds can be acquired from finding linearizations. Our technique will use the concept of left-sets $L(x)$ and quasi-embeddings. Of course, this is also interwoven with the previous mentioned techniques.

\begin{definition} Let $(X,\leq_X)$ be a partial order and $x \in X$. Define the left set $L_X(x)$ as the set $\{y \in X : x \not\leq_X y\}$ and $l_X(x):=o(L_X(x))$. We skip the subscript $X$ if this is clear from the context.
\end{definition}

\begin{theorem}[de Jongh and Parikh \cite{dejonghandparikh}]\label{dejonghandparikh2} Assume that $X$ is a partial ordering. If $L(x)$ is a $\wpo$ for every $x\in X$, then $X$ is a $\wpo$. (The converse is trivially true.) In this case, $o(X) = \sup\{l(x)+1: x \in X\}$.
\end{theorem}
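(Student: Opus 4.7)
The plan is to treat the two assertions separately.

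For the first claim, that $X$ is a $\wpo$, I would argue by contradiction. Suppose $(x_i)_{i \geq 1}$ is a bad sequence in $X$, i.e.\ $x_i \not\leq_X x_j$ whenever $i<j$. In particular $x_1 \not\leq_X x_j$ for every $j \geq 2$, so the tail $(x_j)_{j\geq 2}$ lies entirely inside $L(x_1)$ and is still bad there, contradicting the hypothesis that $L(x_1)$ is a $\wpo$.

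For the equality $o(X) = \sup\{l(x)+1 : x \in X\}$, I would prove the two inequalities in turn. For $o(X) \geq l(x)+1$, fix $x \in X$ and use Theorem~\ref{dejonghandparikh1} to obtain a maximal linear extension $\preceq_x$ of $L(x)$ of order type $l(x)$. Since $y \notin L(x)$ is equivalent to $x \leq_X y$, the set $X$ decomposes as $L(x) \sqcup \{x\} \sqcup U$, where $U := \{y \in X : x <_X y\}$. I would then define a linear order on $X$ by listing $L(x)$ in the order $\preceq_x$, then $x$, then $U$ in some linear extension of $\leq_X$ restricted to $U$. The only non-trivial compatibility check is that no $z \in U$ satisfies $z \leq_X y$ for some $y \in L(x)$, which follows by transitivity: $x <_X z \leq_X y$ would force $x \leq_X y$, contradicting $y \in L(x)$. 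The resulting linear extension of $\leq_X$ has order type at least $l(x)+1$, so $o(X) \geq l(x)+1$.

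For the reverse inequality, let $\preceq$ be any linear extension of $\leq_X$ on $X$, and for $x \in X$ set $S_x := \{y \in X : y \prec x\}$ with $\rho_x := otype(S_x, \preceq)$. The decisive observation is that $S_x$ is a $\preceq$-initial segment of $L(x)$. First, $S_x \subseteq L(x)$: if $y \prec x$ and $x \leq_X y$, then $x \preceq y$ since $\preceq$ extends $\leq_X$, contradicting $y \prec x$ by antisymmetry of $\preceq$. Second, $S_x$ is $\preceq$-downward closed in $L(x)$, since $z \prec y \prec x$ trivially yields $z \in S_x$. Because $\preceq$ restricted to $L(x)$ is a linear extension of $\leq_X$ restricted to $L(x)$, we obtain $\rho_x \leq otype(L(x), \preceq) \leq l(x)$. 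Since $otype(X,\preceq) = \sup\{\rho_x + 1 : x \in X\}$, taking the supremum over all linear extensions $\preceq$ yields $o(X) \leq \sup\{l(x)+1 : x \in X\}$.

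No step is a serious obstacle, but the conceptually crucial point is the initial-segment observation in the upper bound; without it one would be tempted to manually extend $\preceq$ restricted to $S_x$ to a linear extension of all of $L(x)$ via a Szpilrajn-type argument, which is feasible but unnecessarily awkward.
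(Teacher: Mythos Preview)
The paper does not supply its own proof of this theorem; it is stated with attribution to de~Jongh and Parikh and used as a black box throughout. So there is no paper-proof to compare against.

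That said, your argument is correct and is essentially the standard one. A couple of minor points worth making explicit: in the lower-bound direction, you should note that the linear extension you build on $X$ is automatically a well-ordering because you have already shown $X$ is a $\wpo$ (so every linear extension is well-founded); otherwise the definition of $o(X)$ as a supremum over well-ordered extensions would not immediately apply. In the upper-bound direction, your key observation that $S_x$ is a $\preceq$-initial segment of $L(x)$ is exactly the right move, and your remark that it short-circuits any Szpilrajn-style extension argument is apt.
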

 
Using this result, it can be easily seen that the maximal order type is equal to the height of the root in the tree of finite bad sequences. If one would have that $l(x) < \alpha$, for every $x \in X$, then $o(X) \leq \alpha$. Therefore, we are really interested in characterizing the left-sets of a $\wpo$. This, in combination with the notion of quasi-embeddings, will be the most important building blocks of the proofs in sections \ref{sec:Tree-structures below the Howard-Bachmann ordinal number} and \ref{sec:an order-theoretic approach of the Howard-Bachmann ordinal number}.
%

\begin{definition} Let $X$ and $Y$ two posets. A map $e:X \to Y$ is called a \textbf{quasi-embedding} if for all $x,x' \in X$ with $e(x)\leq_Y e(x')$ we have $x \leq _X x'$. 
\end{definition}

\begin{lemma}\label{quasi-embedding} If $X$ and $Y$ are posets and $e: X \to Y$ is a quasi-embedding and $Y$ is a wpo, then $X$ is a wpo and $o(X)\leq o(Y)$.
\end{lemma}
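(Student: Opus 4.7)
The plan is to handle the two conclusions separately. For the first, given any infinite sequence $(x_i)_{i \geq 1}$ in $X$, the sequence $(e(x_i))_{i \geq 1}$ is infinite in $Y$, so since $Y$ is a $\wpo$ there exist indices $i<j$ with $e(x_i) \leq_Y e(x_j)$, and the quasi-embedding property immediately yields $x_i \leq_X x_j$. Hence $X$ is a $\wpo$, and in particular Theorem \ref{dejonghandparikh2} applies to it.

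For the inequality $o(X) \leq o(Y)$, I would proceed by transfinite induction on the ordinal $o(Y)$, proving the statement uniformly in $X$ and $e$. The base case $o(Y)=0$ forces $Y=\emptyset$, hence $X=\emptyset$ and $o(X)=0$. For the inductive step the key observation is that, for every $x \in X$, the restriction of $e$ to $L_X(x)$ takes values in $L_Y(e(x))$ and remains a quasi-embedding: if $x \not\leq_X x'$ then $e(x) \not\leq_Y e(x')$, for otherwise the quasi-embedding condition would supply the contradictory $x \leq_X x'$. Since $L_Y(e(x))$ inherits being a $\wpo$ from $Y$ and satisfies $o(L_Y(e(x))) = l_Y(e(x)) < o(Y)$ by Theorem \ref{dejonghandparikh2}, the inductive hypothesis gives $l_X(x) = o(L_X(x)) \leq o(L_Y(e(x))) = l_Y(e(x))$ for every $x \in X$. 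Plugging this back into Theorem \ref{dejonghandparikh2} yields
\[
o(X) = \sup_{x \in X} \bigl( l_X(x)+1 \bigr) \leq \sup_{x \in X} \bigl( l_Y(e(x))+1 \bigr) \leq \sup_{y \in Y} \bigl( l_Y(y)+1 \bigr) = o(Y),
\]
as desired.

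The only real subtlety is spotting that the induction should be mounted on the ordinal $o(Y)$ and that the correct quantity to propagate through the induction is $l_X(x)$ compared against $l_Y(e(x))$. Once this scheme is identified, the stability of being a quasi-embedding under restriction to left sets is essentially just the contrapositive of the definition, so the remainder of the argument is a short bookkeeping of suprema.
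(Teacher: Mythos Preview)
Your argument is correct. The paper itself does not supply a proof of this lemma; it is stated as a standard background fact and left without justification, so there is no ``paper's own proof'' to compare against. Your two-step approach---first pulling back a good pair along $e$ to see that $X$ is a $\wpo$, then running a transfinite induction on $o(Y)$ using the observation that $e$ restricts to a quasi-embedding $L_X(x)\to L_Y(e(x))$ and invoking Theorem~\ref{dejonghandparikh2}---is one of the standard ways to establish the result and is entirely in keeping with how left sets are used elsewhere in the paper.

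A minor stylistic remark: one can also avoid the transfinite induction by noting that a quasi-embedding between partial orders is injective, pushing a maximal linear extension of $X$ forward to a linear extension of $e(X)\subseteq Y$ of the same order type, and then using $o(e(X))\le o(Y)$. Both routes are equally short; yours has the advantage of dovetailing with the $L(t)$-machinery the paper relies on later.
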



\subsection{Constructions on Well-partial-orderings}\label{sec:constructions on well-partial-orderings}

If we have $\wpo$, one can construct plenty of other well-partial-orderings from it. The two most important examples are disjoint unions and products.

\begin{definition} Let $X_0$ and $X_1$ be two partial orders. Define the \textbf{disjoint union} $X_0 + X_1$ as the set $\{(x,0): x \in X_0\}\cup \{(y,1): y \in X_1\}$ with the following ordering: 
\[(x,i) \leq (y,j) \Leftrightarrow i=j \text{ and } x \leq_{X_i} y.\]
We notate the element $(x,i)$ as $x$ if it is clear from the context in which set $x$ lies in. 
Define the \textbf{cartesian product} $X_0 \times X_1$ as the set $\{(x,y): x  \in X_0, y \in X_1\}$ with the following ordering:
\[(x,y)\leq (x',y') \Leftrightarrow x\leq_{X_0} x' \text{ and } y\leq_{X_1} y'.\]
With $X^n$ we denote the partial ordering $X \times \dots \times X$, where $X$ occurs $n$ times.
\end{definition}

In \cite{higman}, Higman studied one of the most well-known constructor on well-partial-orderings. It is in some sense a cornerstone in the theory of $\wpo$'s.

\begin{definition} Let $(X,\leq_X)$ be a partial order. Define $(X^*,\leq_X^*)$ as the partial ordering on the set $X^*$ of \textbf{finite sequences over $X$} ordered by
\begin{align*}
&(x_1,\dots,x_n) \leq_X^* (y_1,\dots,y_m)  \\
\Leftrightarrow& (\exists 1\leq i_1<\dots < i_n \leq m )(\forall j \in \{1,\dots,n\})  (x_{j} \leq_X y_{i_j}).
\end{align*}
We notate this partial ordering also as $(X^*,\leq^*)$ or even as $X^*$.
\end{definition}

In the next theorem, we state the connection of the maximal order type of these constructors on well-partial-orderings $X$ and the original order type $o(X)$.

\begin{theorem}[de Jongh and Parikh\cite{dejonghandparikh}, Schmidt\cite{dianaschmidt}]
\label{maximal order type sum product and higman}If $X_0$, $X_1$ and $X$ are $\wpo$'s, then $X_0 + X_1$, $X_0 \times X_1$ and $X^*$ are still $\wpo$'s, and
\begin{align*}
o(X_0 + X_1) &= o(X_0)\oplus o(X_1),\\
o(X_0 \times X_1) &= o(X_0) \otimes o(X_1),
\end{align*}
where $\oplus$ and $\otimes$ is the natural sum and product between ordinals, and 
\[o(X^*) = \left\{ 
\begin{array}{ll}
\omega^{\omega^{o(X)-1}} & \text{if $o(X)$ is finite,}\\
\omega^{\omega^{o(X)+1}}& \text{if $o(X)= \varepsilon + n$, with $\varepsilon$ an epsilon number and $n< \omega$,}\\
\omega^{\omega^{o(X)}} & \text{otherwise.}
\end{array}
\right.\]
\end{theorem}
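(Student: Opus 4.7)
The plan is to handle the three claims in increasing order of difficulty, all within the left-set framework provided by Theorem~\ref{dejonghandparikh2}.

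For the disjoint union, observe that for $(x,0) \in X_0 + X_1$ the left set decomposes as $L((x,0)) = L_{X_0}(x) + X_1$, and symmetrically for elements of $X_1$. A simultaneous induction on $o(X_0) \oplus o(X_1)$, applying Theorem~\ref{dejonghandparikh2} and using the recursive definition of the Hessenberg sum, simultaneously establishes that $X_0 + X_1$ is a well-partial-order and that its maximal order type equals $o(X_0) \oplus o(X_1)$. The product case is analogous, using $L((x,y)) = (L_{X_0}(x) \times X_1) \cup (X_0 \times L_{X_1}(y))$; here the subtlety is that this union is not disjoint, but the natural product $\otimes$ is precisely engineered to absorb the overcounting.

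For Higman's construction, I would first verify that $X^*$ is a well-partial-order by the Nash--Williams minimal bad sequence argument: a bad sequence in $X^*$ chosen to minimize lengths in turn yields, after passing to suitable suffixes, a bad sequence whose extracted first letters contradict the well-partial-orderedness of $X$. For the maximal order type, the upper bound proceeds by constructing a quasi-embedding of $X^*$ into the target ordinal $\omega^{\omega^{\alpha}}$, with $\alpha$ the exponent dictated by the case distinction. Fixing by Theorem~\ref{dejonghandparikh1} a maximal linearization inducing a quasi-embedding $g : X \to o(X)$, Schmidt's construction assigns to each word the Cantor Normal Form built from its $g$-values rearranged in non-increasing order; Lemma~\ref{quasi-embedding} then yields the upper bound. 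The lower bound comes from explicitly linearizing $X^*$ by essentially inverting this valuation, starting from a maximal linear extension of $X$ and ordering words lexicographically with respect to their sorted letter-sequences.

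The main obstacle is the case distinction in the formula for $o(X^*)$. The \emph{otherwise} case with exponent $o(X)$ is the generic situation; the other two cases correct for the way Cantor Normal Form degenerates at its boundaries. In the finite case, a CNF valuation of words over a chain of length $n$ can only produce exponents strictly below $n$, effectively shaving one off the ``outer'' exponent and yielding $\omega^{\omega^{n-1}}$ rather than $\omega^{\omega^n}$. In the $\varepsilon + n$ case, $\omega^{o(X)}$ collapses to the underlying epsilon number, so the naive bound $\omega^{\omega^{o(X)}}$ is actually $\omega^{\varepsilon}$, which is too small; the exponent must be pushed up by one to compensate for this collapse. Verifying that the valuation is sharp in each case, and building matching optimal linearizations that exhibit the lower bound, is the technical heart of Schmidt's refinement of de Jongh--Parikh \cite{dejonghandparikh,dianaschmidt}, and is where I expect most of the effort to lie.
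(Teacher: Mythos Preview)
The paper does not give its own proof of this theorem; it is stated with attribution to \cite{dejonghandparikh,dianaschmidt} and used as a black box throughout. So there is no in-paper argument to compare your proposal against.

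That said, your sketch for $X^*$ contains a genuine gap. The map sending a word to the Cantor normal form of its \emph{sorted} $g$-values is not a quasi-embedding from $(X^*,\leq^*)$ into an ordinal: it is not injective (any two permutations of the same word collapse), and in fact it fails the defining property. For instance, with $X=\{a,b\}$ an antichain and $g(a)=0$, $g(b)=1$, the words $(a,b)$ and $(b,a)$ receive the same value $\omega+1$, yet $(a,b)\not\leq^*(b,a)$. The valuation you describe is the right one for the \emph{multiset} order, not the Higman sequence order. The same objection applies to your lower-bound linearization: sorting first and then comparing lexicographically identifies permutations and hence does not extend $\leq^*$. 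The actual arguments in \cite{dejonghandparikh,dianaschmidt} proceed differently --- de Jongh--Parikh analyze $L_{X^*}((x_1,\dots,x_n))$ recursively via the left-set formula of Theorem~\ref{dejonghandparikh2}, and Schmidt uses reifications --- and neither relies on the sorted-CNF map you propose. Your treatment of the sum and product, and your intuition for why the three cases in the $X^*$ formula arise, are fine.
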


In section \ref{sec:an order-theoretic approach of the Howard-Bachmann ordinal number}, we will prove that a specific ordering $\T(\mathcal{B}(\cdot))$ is a well-partial-ordering with order type equal to the Howard-Bachman number. This ordering can be seen as a subordering of Friedman's famous ordering on finite trees and it uses the following constructor on $\wpo$'s.

\begin{definition}
Let $X$ be a partial order. Define $\mathcal{B}(X)$ as the partial order where the underlying set is the set of the finite structured binary trees with leaf-labels in $X$ and with the usual embeddability relation between trees. The internal nodes do not have labels. This means that if a tree $B \in \mathcal{B}(X)$ is embeddable in a tree $B' \in \mathcal{B}(X)$, then either $B$ and $B'$ are trees of one node with label $x \in X$ and $x' \in X$ respectively with $x \leq_X x'$ or $B'$ is a binary tree with a left immediate subtree $B'_1$ and a right immediate subtree $B'_2$ and $B$ is embeddable in $B'_1$ or $B'_2$ or $B$ has also a left immediate subtree $B_1$ and a right immediate subtree $B_2$ and $B_i$ is embeddable in $B'_i$ for $i=1,2$. This implies that if $B$ is embeddable in $B'$, then the internal nodes of $B$ are mapped on the internal nodes of $B'$ and the leaf-nodes of $B$ on the leaf-nodes of $B'$.
\end{definition}

One can prove that this construct a well-partial-ordering starting from a $\wpo$ $X$. We are interested in its maximal order type.

\begin{definition}
Define $\varphi_0 \beta$ as $\omega^\beta$ and let $\varphi_\alpha $ be the enumeration function of the common fixed points of all $\varphi_\gamma$ with $\gamma < \alpha$. This constructs the so-called Veblen hierarchy. 
\end{definition}

\begin{definition}
Let $\alpha$ be an ordinal.
\[\overline{\alpha} := \left\{ \begin{array}{ll}
\alpha-1 &  \text{if $\alpha$ is finite,}\\
\alpha+1 & \text{if $\alpha= \varphi_2 \beta +n$ with $n$ a natural number,}\\
\alpha & \text{otherwise.}
\end{array}
 \right. \]
\end{definition}

For a proof of the following theorem, we refer the reader to \cite{dianaschmidt}.
\begin{theorem}
If $X$ is a $\wpo$, then $\mathcal{B}(X)$ is a $\wpo$ and $o(\mathcal{B}(X)) = \varepsilon_{\overline{o(X)}}$.
\end{theorem}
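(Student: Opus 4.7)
The plan is to establish both the $\wpo$-ness of $\mathcal{B}(X)$ and the identity $o(\mathcal{B}(X)) = \varepsilon_{\overline{o(X)}}$ in tandem, by producing matching upper and lower bounds on the order type. The $\wpo$-ness itself can be obtained through a Nash-Williams minimal bad sequence argument in the spirit of Kruskal's theorem (the structural condition that internal nodes must map to internal nodes causes no trouble for the usual proof), so I concentrate on the order-type computation.

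For the upper bound I would use the left-set criterion of Theorem~\ref{dejonghandparikh2} and show by structural induction on $B \in \mathcal{B}(X)$ that $o(L_{\mathcal{B}(X)}(B)) < \varepsilon_{\overline{o(X)}}$. When $B = x$ is a leaf, every tree $B' \in L(B)$ has all its leaves in $L_X(x)$, so $L(B)$ quasi-embeds into $\mathcal{B}(L_X(x))$ and the induction hypothesis together with $o(L_X(x)) < o(X)$ yields a strict bound. When $B = (B_1, B_2)$ is internal, I would decompose $L(B)$ according to the first structural obstruction to embedding, expressing it as a controlled disjoint union and iterated product of $\mathcal{B}(X)$, $L(B_1)$ and $L(B_2)$; Theorem~\ref{maximal order type sum product and higman} then bounds each piece in terms of $\oplus$, $\otimes$ and $\omega^{\omega^{\cdot}}$. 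The closure of $\varepsilon_{\overline{o(X)}}$ under $\omega^{\cdot}$ and natural arithmetic is what makes the recursion stabilize at exactly this ordinal.

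For the lower bound I would construct, for each $\alpha < \varepsilon_{\overline{o(X)}}$, a canonical witness tree $T_\alpha \in \mathcal{B}(X)$ that reflects the Cantor normal form of $\alpha$: iterated right-nesting encodes exponentials, chains of left-subtrees encode summands, and the leaves at the bottom are filled in using a fixed linearization $\iota \colon X \to o(X)$ supplied by Theorem~\ref{dejonghandparikh1}. The assignment $\alpha \mapsto T_\alpha$ is engineered so that it is a quasi-embedding from the linear order $\varepsilon_{\overline{o(X)}}$ into $\mathcal{B}(X)$, which by Lemma~\ref{quasi-embedding} forces $o(\mathcal{B}(X)) \geq \varepsilon_{\overline{o(X)}}$.

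The main obstacle I expect is the case analysis dictated by the definition of $\overline{\cdot}$, and in particular the $+1$ clause when $o(X) = \varphi_2\beta + n$. This exceptional case reflects a genuine asymmetry between how binary tree structure encodes multiplicative versus exponential ordinal arithmetic: the naive encoding loses a degree of freedom precisely in this regime, and matching the upper and lower bounds there requires a carefully padded Cantor normal form on the tree side, together with a suitably chosen linearization $\iota$, to recover the missing degree of freedom. Verifying that this padding gives a quasi-embedding, rather than merely a monotone map, is the delicate step that must be handled separately from the generic case.
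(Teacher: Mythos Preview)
The paper does not actually prove this theorem: immediately before the statement it says ``For a proof of the following theorem, we refer the reader to \cite{dianaschmidt}.'' So there is no in-paper argument to compare your proposal against; the result is simply imported from Schmidt's habilitation.

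That said, your outline is a credible reconstruction of how such a proof goes, and it is very much in the spirit of the techniques the paper does develop for its own new results. In particular, your upper-bound strategy (characterize $L_{\mathcal{B}(X)}(B)$ by a structural case split and quasi-embed it into something built from $L(B_1)$, $L(B_2)$ and $\mathcal{B}(L_X(x))$, then appeal to closure of $\varepsilon_{\overline{o(X)}}$ under the relevant arithmetic) is exactly the pattern the paper uses later in Theorem~\ref{computation upper bound T(W) with W(X)=B(X)} when bounding $o(\T(\mathcal{B}(\cdot)))$; there the $W_B$-construction and the $g_B$ quasi-embedding are precisely the ``controlled disjoint union and iterated product'' you describe. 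Your lower-bound idea of encoding Cantor normal forms as right-leaning binary trees with leaves coming from a fixed linearization of $X$ is likewise the same device the paper deploys in Theorem~\ref{computation lower bound W(X)=B(X)}.

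One point to be careful about: in your upper-bound sketch you phrase the argument as ``structural induction on $B$'', but the leaf case already requires knowing $o(\mathcal{B}(L_X(x)))\le\varepsilon_{\overline{o(L_X(x))}}$, which is the theorem itself for a smaller wpo. So the induction is really a transfinite induction on $o(X)$ with an inner structural induction on $B$; make that explicit. Your remark about the $\varphi_2\beta+n$ case being the delicate one is correct and is where Schmidt's original argument does the real work.
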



\subsection{Generalized tree-structures}\label{sec:generalized tree-structures}

In section \ref{sec:an order-theoretic approach of the Howard-Bachmann ordinal number}, we present a $\wpo$ of order type $\eta_0$. This will be a tree-structure that can be interpreted as a subordering of Friedman's famous $\wpo$ on trees with gap-condition. In section \ref{sec:Tree-structures below the Howard-Bachmann ordinal number}, we give some auxiliary $\wpo$'s with maximal order types strictly below $\eta_0$. These $\wpo$'s can also be seen as tree-representations of ordinals below $\eta_0$. In this subsection, we give the definitions of these $\wpo$'s. The reader can also find this kind of $\wpo$'s in \cite{WellpartialorderingsandthebigVeblennumber} and \cite{computationmaximalordertypemultisets}.
For the actual proofs of well-partial-orderedness and the maximal order types, the reader has to wait until sections \ref{sec:Tree-structures below the Howard-Bachmann ordinal number} and \ref{sec:an order-theoretic approach of the Howard-Bachmann ordinal number}. Before we present the definition, we elaborate on the definition of theta-functions.


\begin{definition} Let $\Omega$ denote the first uncountable ordinal. Every ordinal $0<\alpha < \varepsilon_{\Omega+1}$ can be written as $\Omega^{\alpha_1} \beta_1 + \dots + \Omega^{\alpha_n}\beta_n$ with $\beta_i< \Omega$ and $\alpha> \alpha_1 > \dots > \alpha_n$. Define the set of coefficients recursively as $K(\alpha)=\{\beta_1,\dots,\beta_n\} \cup K(\alpha_1)\cup \dots \cup K(\alpha_n)$. Let $K(0)$ be $\{0\}$. Define then $k(\alpha)$ as the ordinal $\max( K(\alpha))$.
\end{definition}

\begin{definition}
For an ordinal $\alpha$, define $\Omega_0[\alpha]$ as $\alpha$ and $\Omega_{n+1}[\alpha]$ as $\Omega^{\Omega_n[\alpha]}$.
\end{definition}

\begin{definition} Let $P$ denote the set of the additive closed ordinal numbers $\{\omega^\alpha: \alpha \in ON\}$. For every ordinal $\alpha< \varepsilon_{\Omega+1}$, define $\vartheta(\alpha)$ as $\min\{\zeta \in P : k(\alpha) < \zeta \text{ and } \forall \beta< \alpha (k(\beta) < \zeta \rightarrow \vartheta(\beta) < \zeta)\}$. The Howard-Bachmann ordinal number is defined as $\eta_0 = \vartheta(\varepsilon_{\Omega+1}) = \sup_{n} \left(\vartheta\left(\Omega_n[1]\right)\right)$.
\end{definition}

For more information about the theta-function and its connection with Buchholz' $\Psi$-function, we refer the reader to \cite{rathjenweiermann}. There, they introduced the $\vartheta$-function in a different way, but one can prove that they coincide with our definition if the argument is above $\Omega^2$. It can be shown be an easy cardinality argument that $\vartheta\alpha < \Omega$.

\begin{lemma}\label{main property theta-function}
$\vartheta\alpha<\vartheta \beta \iff \begin{cases} 
\alpha<\beta\mbox{ and }k(\alpha) <\vartheta \beta \\
\beta<\alpha \mbox{ and } \vartheta \alpha\leq k(\beta). \end{cases}$
\end{lemma}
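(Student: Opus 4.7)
The plan is to unfold the defining formula for $\vartheta$ in both directions, working case by case according to the trichotomy between $\alpha$ and $\beta$. Two facts built into the definition will be used repeatedly: (i) $k(\alpha)<\vartheta\alpha$ holds for every $\alpha$ in the domain, and (ii) whenever $\gamma<\alpha$ and $k(\gamma)<\vartheta\alpha$, one has $\vartheta\gamma<\vartheta\alpha$. In addition, $\vartheta\alpha$ always lies in $P$, so it is a legitimate candidate ordinal for the minimum defining any $\vartheta\delta$. These three ingredients together with trichotomy are the only tools needed.

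For the direction $(\Leftarrow)$, both sub-cases follow almost by inspection. If $\alpha<\beta$ and $k(\alpha)<\vartheta\beta$, then applying (ii) to $\vartheta\beta$ with the ordinal $\alpha$ yields $\vartheta\alpha<\vartheta\beta$ at once. If instead $\beta<\alpha$ and $\vartheta\alpha\leq k(\beta)$, then combining with $k(\beta)<\vartheta\beta$ from (i) gives the chain $\vartheta\alpha\leq k(\beta)<\vartheta\beta$.

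For the direction $(\Rightarrow)$, assume $\vartheta\alpha<\vartheta\beta$; this immediately rules out $\alpha=\beta$. If $\alpha<\beta$, then by (i) we have $k(\alpha)<\vartheta\alpha<\vartheta\beta$, so the first sub-case holds. If $\beta<\alpha$, I would argue by contradiction: suppose $\vartheta\alpha>k(\beta)$, and then show that $\vartheta\alpha$ itself is a valid witness for the minimum defining $\vartheta\beta$. Indeed, $\vartheta\alpha\in P$, it strictly exceeds $k(\beta)$ by assumption, and for every $\gamma<\beta$ with $k(\gamma)<\vartheta\alpha$ we have $\gamma<\beta<\alpha$, so condition (ii) applied to $\vartheta\alpha$ gives $\vartheta\gamma<\vartheta\alpha$. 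Consequently $\vartheta\beta\leq\vartheta\alpha$, contradicting the hypothesis, and we conclude $\vartheta\alpha\leq k(\beta)$.

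The only delicate step is the minimality argument in the last case, where one must carefully check that $\vartheta\alpha$ simultaneously meets both defining clauses when viewed as a candidate for $\vartheta\beta$; the hypothesis $\beta<\alpha$ is used precisely to transport clause (ii) from $\vartheta\alpha$ to $\vartheta\beta$. Everything else is mechanical unwinding of the definition combined with the observation $k(\alpha)<\vartheta\alpha$.
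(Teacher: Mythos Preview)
Your argument is correct and is the standard way this characterization is derived from the definition of $\vartheta$. The paper does not supply a proof of this lemma; it is stated as a known property of the $\vartheta$-function (with a pointer to \cite{rathjenweiermann} for background), so there is no paper proof to compare against.
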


We need the following two additional lemmas. The proofs are rather straightforward.

\begin{lemma}\label{coefficients under multiplicative sum and product} Suppose $\alpha$ and $\beta$ are ordinals beneath $\varepsilon_{\Omega+1}$. Then 
\begin{align*}
k(\alpha \oplus \beta) &\leq k(\alpha) \oplus k(\beta),\\
k(\alpha \otimes \beta) &\leq \max\{k(\alpha)\oplus k(\beta), k(\alpha) \otimes k(\beta) \otimes \omega\},\\
k(\omega^{\alpha}) & \leq \omega^{k(\alpha)}.
\end{align*}
Furthermore, $k(\alpha), k(\beta) \leq k(\alpha \oplus \beta)$ and $k(\alpha) \leq k(\alpha \otimes \beta)$ if $\beta >0$.
\end{lemma}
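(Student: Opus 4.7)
The plan is to read off each inequality from the base-$\Omega$ Cantor normal forms of $\alpha$ and $\beta$, with a transfinite induction for the exponential case. Throughout, write $\alpha = \Omega^{\alpha_1}\beta_1 + \cdots + \Omega^{\alpha_n}\beta_n$ and $\beta = \Omega^{\gamma_1}\delta_1 + \cdots + \Omega^{\gamma_m}\delta_m$.

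For the first inequality, the CNF of $\alpha \oplus \beta$ arises from interleaving the two exponent sequences in descending order and naturally adding coefficients whenever an $\alpha_i$ matches some $\gamma_j$. Hence each element of $K(\alpha \oplus \beta)$ is either an exponent or coefficient already present in $K(\alpha) \cup K(\beta)$, or a natural sum $\beta_i \oplus \delta_j$; in either case its value is at most $k(\alpha) \oplus k(\beta)$, so $k(\alpha \oplus \beta) \leq k(\alpha) \oplus k(\beta)$. The monotonicity statements $k(\alpha), k(\beta) \leq k(\alpha \oplus \beta)$ follow at once from the same description, since every coefficient of $\alpha$ either survives intact in $\alpha \oplus \beta$ or is replaced by a larger natural sum, and every exponent of $\alpha$ reappears in the CNF of $\alpha \oplus \beta$.

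For the product, distributivity of $\otimes$ over $\oplus$ together with the identity $\Omega^a \otimes \Omega^b = \Omega^{a \oplus b}$ gives the formal expansion
\[
\alpha \otimes \beta \;=\; \bigoplus_{i,j}\, \Omega^{\alpha_i \oplus \gamma_j}\,(\beta_i \otimes \delta_j),
\]
and passing to CNF only regroups these $nm$ summands by equal exponent. By the first inequality, $k(\alpha_i \oplus \gamma_j) \leq k(\alpha) \oplus k(\beta)$, which accounts for the $K$-contribution of the exponents. Each coefficient is a natural sum of at most $nm$ products of the form $\beta_i \otimes \delta_j \leq k(\alpha) \otimes k(\beta)$, and since $nm$ is finite the whole sum is bounded by $k(\alpha) \otimes k(\beta) \otimes \omega$. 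Taking the maximum of these two contributions yields the claimed bound. The lower bound $k(\alpha) \leq k(\alpha \otimes \beta)$ for $\beta > 0$ is immediate by restricting the expansion to $j = m$: the summand $\Omega^{\alpha_i \oplus \gamma_m}(\beta_i \otimes \delta_m)$ reproduces a scaled version of $\alpha$ which cannot diminish any of its $K$-elements.

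The remaining inequality $k(\omega^\alpha) \leq \omega^{k(\alpha)}$ is proved by transfinite induction on $\alpha$. Split $\alpha = \Omega \cdot \gamma + \rho$ with $\rho < \Omega$; then $\omega^\alpha = \omega^{\Omega \gamma} \cdot \omega^\rho = \Omega^\gamma \cdot \omega^\rho$, and since $\omega^\rho < \Omega$ this is a single leading term in the base-$\Omega$ CNF of $\omega^\alpha$ after further unfolding of $\gamma$. Hence $K(\omega^\alpha) \subseteq \{\omega^\rho\} \cup K(\gamma)$, and as $\{\rho\} \cup K(\gamma) \subseteq K(\alpha)$, the induction hypothesis applied to $\gamma$ yields $k(\omega^\alpha) \leq \max(\omega^\rho, k(\gamma)) \leq \omega^{k(\alpha)}$. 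The one delicate step in the whole lemma is the collision counting for the product: once one records that only finitely many index pairs can share a given exponent $\alpha_i \oplus \gamma_j$, the extra factor of $\omega$ in the second term of the max easily absorbs the resulting natural sum.
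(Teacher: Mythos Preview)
The paper omits the proof entirely, calling it ``rather straightforward,'' so there is nothing to compare against; your approach via the base-$\Omega$ Cantor normal form is the natural one and your arguments for $\oplus$, $\otimes$, and the lower bounds are correct.

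There is, however, a genuine slip in the argument for $k(\omega^\alpha)\le\omega^{k(\alpha)}$. You write $\alpha=\Omega\gamma+\rho$ and then assert $\{\rho\}\cup K(\gamma)\subseteq K(\alpha)$. This set inclusion is false: take $\alpha=\Omega^3$, so $\gamma=\Omega^2$ and $\rho=0$; then $K(\alpha)=\{1,3\}$ while $K(\gamma)=\{1,2\}$, and $2\notin K(\alpha)$. The reason is that if $\gamma=\sum_i\Omega^{e_i}f_i$, then the exponents of $\alpha$ are $1+e_i$, not $e_i$, so $K(\alpha)$ picks up $K(1+e_i)$ rather than $K(e_i)$. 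What you actually need, and what \emph{is} true, is the weaker inequality $k(\gamma)\le k(\alpha)$: one checks directly that $k(e_i)\le k(1+e_i)$ (trivial if $e_i\ge\omega$ since then $1+e_i=e_i$; and for finite $e_i$ it reads $e_i\le e_i+1$), whence $k(\gamma)=\max_i\{f_i,k(e_i)\}\le\max_i\{f_i,k(1+e_i)\}\le k(\alpha)$. With this correction the conclusion $k(\omega^\alpha)=\max(\omega^\rho,k(\gamma))\le\omega^{k(\alpha)}$ goes through. Note also that no transfinite induction is actually invoked in your final chain of inequalities, so the phrase ``the induction hypothesis applied to $\gamma$'' is superfluous and a bit misleading; the argument is direct once you have $k(\gamma)\le k(\alpha)$.
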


\begin{lemma}\label{coefficients under *} Suppose $\alpha_n,\dots, \alpha_0$ are countable ordinal numbers with $\alpha_i < \gamma$ for an epsilon number $\gamma$. Then  $k(o((\Omega^n \alpha_n + \dots + \Omega \alpha_1 + \alpha_0)^*))< \gamma$.
\end{lemma}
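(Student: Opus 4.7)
\textbf{Plan of proof for Lemma \ref{coefficients under *}.}
Write $\beta := \Omega^n \alpha_n + \dots + \Omega \alpha_1 + \alpha_0$. The first step is to apply Theorem \ref{maximal order type sum product and higman} to a $\wpo$ of order type $\beta$: this gives $o(X^*) = \omega^{\omega^{\beta'}}$ where $\beta' \in \{\beta - 1,\,\beta,\,\beta+1\}$ according to whether $\beta$ is finite, of the form $\varepsilon + m$ with $\varepsilon$ an epsilon number and $m<\omega$, or neither. Thus the problem reduces to bounding $k(\omega^{\omega^{\beta'}})$, and by two applications of the third inequality in Lemma \ref{coefficients under multiplicative sum and product} one gets
\[
k\bigl(\omega^{\omega^{\beta'}}\bigr) \;\leq\; \omega^{\omega^{k(\beta')}}.
\]

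The second step is to bound $k(\beta')$. Unpacking the recursive definition of $K$, and noting that the exponents $n,n-1,\dots,0$ appearing in $\beta$ are natural numbers (so $K(i) = \{i,0\}$ for $i\geq 1$ and $K(0)=\{0\}$), one obtains
\[
K(\beta) = \{\alpha_0,\alpha_1,\dots,\alpha_n\} \cup \{0,1,\dots,n\},
\]
hence $k(\beta) = \max(\alpha_0,\dots,\alpha_n,n)$. Since each $\alpha_i < \gamma$ and $n < \omega \leq \gamma$ (as $\gamma$ is an epsilon number), this yields $k(\beta) < \gamma$. Passing from $\beta$ to $\beta \pm 1$ only alters the finite summand $\alpha_0$, so $k(\beta') < \gamma$ in all three cases.

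Finally, since $\gamma$ is an epsilon number it is closed under the map $\xi \mapsto \omega^\xi$, and in particular under $\xi \mapsto \omega^{\omega^\xi}$. Combining the two steps gives
\[
k\bigl(o(X^*)\bigr) \;\leq\; \omega^{\omega^{k(\beta')}} \;<\; \gamma,
\]
which is the claim. I do not anticipate any serious obstacle: the work is essentially a bookkeeping exercise combining Theorem \ref{maximal order type sum product and higman} (to identify the form of $o(X^*)$), Lemma \ref{coefficients under multiplicative sum and product} (to push $k$ through iterated $\omega$-exponentiation), and the closure of $\gamma$ under $\omega^{(\cdot)}$. The only mildly delicate point is remembering to treat the three cases of Higman's formula uniformly via the bound $\beta' \leq \beta+1$.
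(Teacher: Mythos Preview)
Your argument is correct and is precisely the straightforward computation the paper has in mind (the paper omits the proof, remarking only that it is ``rather straightforward''). The combination of Theorem~\ref{maximal order type sum product and higman}, two applications of $k(\omega^\alpha)\le\omega^{k(\alpha)}$ from Lemma~\ref{coefficients under multiplicative sum and product}, and closure of the epsilon number $\gamma$ under $\omega$-exponentiation is exactly what is needed; your handling of the $\pm 1$ shift in the exponent is also fine since it only perturbs the countable summand $\alpha_0$.
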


Before we give the definition of the $\wpo$'s that we use in sections \ref{sec:Tree-structures below the Howard-Bachmann ordinal number} and \ref{sec:an order-theoretic approach of the Howard-Bachmann ordinal number}, let us define a specific class of constructors.

\begin{definition}
Define $Map$ as the least set satisfying the following:
\begin{enumerate}
\item $\cdot  \in Map$, ($\cdot$ plays the role of a place holder).
\item If $\mathbb{X}$ is a countable $\wpo$, then $\mathbb{X} \in Map$,
\item If $W_1, W_2 \in Map$, then $W_1 + W_2$, $W_1 \times W_2$, $W_1^*$  and $\mathcal{B}(W_1)$ are also elements of $Map$. 
\end{enumerate} 
Every element $W$ of $Map$ can be seen as a mapping from the set of partial orderings to the set of partial orderings: $W(X)$ is a partial ordering by putting the partial order $X$ into the $\cdot$. For example, if $W=  (\mathcal{B}(\cdot)\times \X)^* $, then $W(X)$ is the partial ordering $ (\mathcal{B}(X)\times \X)^* $. Furthermore, if $X$ is a $\wpo$, then $W(X)$ is a $\wpo$ and if $X$ is countable, then so is $W(X)$.
Every element of $W(X)$ is represented by a term in finitely many elements in $X$. For example in the case 
$W= (\mathcal{B}(\cdot)\times \X)^* $, the element $\left((B(x_1,x_2),\mathbb{x}_1),(B(x_3,x_1),\mathbb{x}_2)\right)$ in $W(X)$ with $x_1,x_2,x_3 \in X$, $\mathbb{x}_1, \mathbb{x}_2 \in \X$ and  $B(a,b)$ the binary tree\\
\begin{picture}(1,56)
\put(60,10){\circle*{5}}
\put(42,40){\circle*{5}}
\put(78,40){\circle*{5}}
\put(60,10){\line(3,5){18}}
\put(60,10){\line(-3,5){18}}
\put(37,43){$a$}
\put(81,43){$b$}
\end{picture}
\ \\
is represented by a term in $x_1,x_2,x_3$. By deleting all entries of $X$, we get the naked term $\left((B(\cdot,\cdot),\mathbb{x}_1),(B(\cdot,\cdot),\mathbb{x}_2)\right)$, which we notate as $w(\cdot,\cdot,\cdot,\cdot)$, where
\[ w(a,b,c,d) = \left((B(a,b),\mathbb{x}_1),(B(c,d),\mathbb{x}_2)\right).\]
Therefore, the element $\left((B(x_1,x_2),\mathbb{x}_1),(B(x_3,x_1),\mathbb{x}_2)\right)$ can be described using this naked term $w(\cdot,\cdot,\cdot,\cdot)$ and the elements $x_1,x_2,x_3\in X$ as $w(x_1,x_2,x_3,x_1)$. In general, an element of $W(X)$ is represented as $w(x_1,\dots,x_n)$ using a naked term $w(\cdot,\dots,\cdot)$ and elements $x_1,\dots,x_n\in X$. We will call this naked term `an element of $W$'.
\end{definition}

\begin{definition}
Take $W \in Map$. Define $\T(W)$ as the least set satisfying the following requirements
\begin{enumerate}
\item $\circ \in \T(W)$,
\item If $w(\cdot ,\dots,\cdot)$ is an element of $W$ and $t_1,\dots,t_n \in \T(W)$, then $w(t_1,\dots,t_n)$ is an element of $W(\T(W))$ and let $\circ[w(t_1,\dots,t_n)] \in \T(W)$. We will say that $t_1,\dots,t_n$ have a lower complexity than $\circ[w(t_1,\dots,t_n)]$.
\end{enumerate}
Let the underlying ordering $\leq_{\T(W)}$  be the least binary reflexive and transitive relation on $\T(W)$ such that
\begin{enumerate}
\item $\circ \leq_{\T(W)}  t$  for every $t$ in $\T(W)$,
\item if $s  \leq_{\T(W)}  t_j$ for a certain $j$, then $s \leq_{\T(W)} \circ[w(t_1,\dots,t_n)]$,
\item if $w(t_1,\dots,t_n)\leq_{W(\T(W), \leq_{\T(W)})} w'(t'_1,\dots,t'_{n'})$,\\
then $\circ[w(t_1,\dots,t_n)] \leq_{\T(W)} \circ[w'(t'_1,\dots,t'_{n'})]$.
\end{enumerate}
If it is clear from the context, we also notate $\leq_{\T(W)}$ as $\leq$. Sometimes, we notate the elements $\circ[w(t_1,\dots,t_n)]$ also as $\circ w(t_1,\dots,t_n)$ if $w(t_1,\dots,t_n)$ has already enough brackets in its description.
\end{definition}

\begin{notation} Suppose $t=\circ[w(t_1,\dots,t_n)]$ is an element of $\T(W)$. We denote the element $w(t_1,\dots,t_n)$ of $W(\T(W))$ also as $\times t$.
\end{notation}

Our general conjecture is that for every $W \in Map$, the partial ordering $\T(W)$ is actually a $\wpo$ and the maximal order type is equal to $\vartheta(o(W(\Omega)))$ if $\Omega^3 \leq o(W(\Omega)) \leq \varepsilon_{\Omega+1}$. In section \ref{sec:Tree-structures below the Howard-Bachmann ordinal number}, we prove for specific $W \in Map$ that the ordering $\T(W)$ is indeed a $\wpo$ and $\vartheta(o(W(\Omega)))$ is an upper bound on the maximal order type of $\T(W)$. In section \ref{sec:an order-theoretic approach of the Howard-Bachmann ordinal number}, we show that $\T(\mathcal{B}(\cdot))$ is also a $\wpo$ and $\vartheta(\varepsilon_{\Omega+1})$ is exactly equal to $o(\T(\mathcal{B}(\cdot)))$. This $\wpo$ can be seen as a subordering of Friedman's famous trees with gap-embeddability relation \cite{simpsonfinitetrees}. 
For cases where $o(W(\Omega)) > \varepsilon_{\Omega+1}$, some little adaptations of the general formula $o(\T(W)) = \vartheta(o(W(\Omega)))$ are needed because the domain of the theta-function is below $\varepsilon_{\Omega+1}$. This is however beyond the scope of this article and will be treated in latter work. We believe that generalizations will lead to a full classification of the strength of Friedman's $\wpo$'s.

\medskip

The next lemma is a very important lemma for the rest of the article. We will skip its proof, but one can find some subparts and the general idea of this proof in \cite{WellpartialorderingsandthebigVeblennumber}.

\begin{lemma}[Lifting Lemma]\label{Lifting} Assume that $W \in Map$ and let $q$ be a quasi-embedding from the partial ordering $Y$ to the partial ordering $Z$. Then for all elements $y_1,\dots, y_n, y'_1, \dots,y'_m$ in $Y$ and $w(\cdot,\dots,\cdot),$ $v(\cdot,\dots,\cdot)$ in $W$ the inequa\-lity $w(q(y_1),\dots, q(y_n)) \leq_{W(Z)} v(q(y'_1),\dots,q(y'_m)) $ implies $w(y_1,\dots, y_n) \leq_{W(Y)} v(y'_1,\dots,y'_m)$.
\end{lemma}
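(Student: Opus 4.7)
The plan is to proceed by induction on the structural definition of $W \in Map$, handling each clause ($\cdot$, a countable $\wpo$, and the four compound constructors $+$, $\times$, $*$, $\mathcal{B}$) in turn. For each case I will analyze how a comparison between $w(q(y_1),\dots,q(y_n))$ and $v(q(y'_1),\dots,q(y'_m))$ in $W(Z)$ is forced to decompose by the defining clauses of the relevant constructor, and then invoke the inductive hypothesis on the subconstructors in order to lift each resulting sub-comparison from $W_i(Z)$ back to $W_i(Y)$.

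The base cases are immediate. For $W = \cdot$, the naked terms $w$ and $v$ are placeholders, so the hypothesis reduces to $q(y_1) \leq_Z q(y'_1)$, which gives $y_1 \leq_Y y'_1$ directly from the quasi-embedding property of $q$. For $W = \mathbb{X}$ a countable $\wpo$, a naked term contains no placeholder at all, so $W(Y) = W(Z) = \mathbb{X}$ and the implication is trivial. For the inductive step with $W = W_1 + W_2$, the defining clause of $+$ forces both naked terms to live in the same summand $W_i$, and the inductive hypothesis applied there finishes the argument. For $W = W_1 \times W_2$, a naked term decomposes as a pair, the product comparison decomposes accordingly, and the inductive hypothesis is applied componentwise.

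For $W = W_1^*$, a naked term is a finite sequence $(w_1,\dots,w_k)$ of naked $W_1$-terms with the entries of $Y$ distributed among their placeholders; a Higman comparison in $W_1(Z)^*$ supplies a strictly increasing index map $i_1<\dots<i_k$ together with pointwise inequalities in $W_1(Z)$ between $w_j(q(\dots))$ and $v_{i_j}(q(\dots))$, and the inductive hypothesis on $W_1$ lifts each such pointwise inequality to $W_1(Y)$, after which the same index map witnesses the desired Higman comparison in $W_1(Y)^*$. For $W = \mathcal{B}(W_1)$, I will run a secondary induction on the structural complexity of the right-hand tree, using the three alternatives in the embeddability relation of $\mathcal{B}$: either both trees are single leaves (reducing to a $W_1(Z)$ comparison, handled by the inductive hypothesis on $W_1$), or the left tree embeds into an immediate subtree of the right tree (handled by the sub-inductive hypothesis), or both trees have immediate left and right subtrees and the comparison decomposes into comparisons of corresponding children (again handled by the sub-inductive hypothesis).

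The main obstacle is the bookkeeping between naked terms, viewed as syntactic objects with placeholder slots, and their semantic interpretations in $W(Y)$ and $W(Z)$ once the slots are filled. The point that needs genuine care is the $\mathcal{B}$ case, where the embedding is constrained to send internal nodes to internal nodes and leaves to leaves, so one must verify that the structural decomposition of the tree comparison in $W(Z)$ really does match up the placeholder-positions of $w$ with those of $v$ in a way that leaves the inductive hypothesis applicable at the leaves. Once this accounting has been set up cleanly at the start, every inductive step is a direct consequence of the defining clause of the constructor together with the hypothesis at a strictly simpler subconstructor, so the argument is a careful but mechanical structural induction of the same flavor as the analogous lifting arguments in \cite{WellpartialorderingsandthebigVeblennumber}.
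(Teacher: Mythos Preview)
Your structural induction on $W\in Map$ is correct and is the natural approach; each case is handled appropriately, and the secondary induction in the $\mathcal{B}$ case is exactly what is needed. Note that the paper itself does not give a proof of this lemma: it explicitly skips it and refers the reader to \cite{WellpartialorderingsandthebigVeblennumber} for ``some subparts and the general idea'', so there is nothing to compare against beyond observing that your argument is precisely the kind of routine structural induction the authors have in mind.
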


Before we go further, we want to show that $\T(\mathcal{B}(\cdot))$ can indeed be seen as a subordering of Friedman's $\wpo$ with gap-condition. First, we give the definition of his $\wpo$.

\begin{definition}
Let $\mathbb{T}_n$ be the set of finite rooted trees with labels in $\{0,\dots,n-1\}$. An element of $\mathbb{T}_n$ is of the form $(T,l)$, where $T$ is a finite rooted tree, which we see as a partial ordering on a set of nodes, and $l$ is a labeling function, a mapping from $T$ to the set $\{0,\dots,n-1\}$. Define $(T_1,l_1)\leq_{gap} (T_2,l_2)$ if there exists an injective order- and infimum-preserving mapping $f$ from $T_1$ to $T_2$ such that 
\begin{enumerate}
\item $\forall \tau \in T_1$, we have $l_1(\tau) = l_2(f(\tau))$.
\item $\forall \tau  \in T_1$ and for all immediate successors $\tau' \in T_1$ of $\tau$, we have
that if $\overline{\tau} \in T_2$ and $f(\tau) < \overline{\tau} < f(\tau')$, then $l_2(\overline{\tau}) \geq l_2(f(\tau')) = l_1(\tau')$.
\end{enumerate}
\end{definition}

In \cite{simpsonfinitetrees}, this is the so-called \textit{weak} gap-embeddability relation.

\begin{definition}
Define the partial ordering $\overline{\mathbb{T}}_2$ as the subset of $(\mathbb{T}_2, \leq_{gap})$ which consists of all finite rooted trees such that nodes with label $0$ has zero or one immediate successor(s) and nodes with label $1$ has exactly two immediate successors. Furthermore, every tree in $\overline{\mathbb{T}}_2$ has a root with label $0$.
\end{definition}

Note that the trees in $\mathbb{T}_n$ are unstructured. The trees in $\T(\mathcal{B}(\cdot))$ are structured, so to see the resemblance between $\T(\mathcal{B}(\cdot))$ and $\overline{\mathbb{T}}_2$ we have to restrict $\overline{\mathbb{T}}_2$ even a little bit more: we say that every tree in $\overline{\mathbb{T}}_2$ is structured, meaning that it has a left-hand side and a right-hand side and an embedding between two trees preserves these left-right order.

\begin{lemma}\label{T(B) is a subordering of friedmans wpo}
The partial-ordering $\T(\mathcal{B}(\cdot))$ is order-isomorphic to the partial ordering $\overline{\mathbb{T}}_2$.
\end{lemma}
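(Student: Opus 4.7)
My plan is to construct a concrete bijection $\Phi \colon \T(\mathcal{B}(\cdot)) \to \overline{\mathbb{T}}_2$ together with an auxiliary map $\Psi$ on $\mathcal{B}(\T(\mathcal{B}(\cdot)))$, defined by mutual recursion on term complexity, and then to verify that $\leq_{\T(\mathcal{B}(\cdot))}$ corresponds exactly to $\leq_{gap}$. Put $\Phi(\circ)$ equal to the single-node tree with label $0$, and $\Phi(\circ[B])$ equal to the tree whose root has label $0$ and whose unique child is the root of $\Psi(B)$. On $\mathcal{B}$-terms, let $\Psi$ of a leaf labelled $t$ be $\Phi(t)$, and let $\Psi(B_1 \wedge B_2)$ be the tree with label-$1$ root and left, right subtrees $\Psi(B_1), \Psi(B_2)$. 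A routine induction shows $\Phi(t) \in \overline{\mathbb{T}}_2$, and the inverse is read off by inspecting the root's unique child (absent: $t = \circ$; label-$0$ child: $B$ is a leaf; label-$1$ child: $B$ is a proper binary tree), so $\Phi$ is a bijection of underlying sets.

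For the forward direction I would induct on the three rules generating $\leq_{\T(\mathcal{B}(\cdot))}$. The base rule $\circ \leq t$ is witnessed by sending the single label-$0$ node to the root of $\Phi(t)$. The subterm rule $s \leq t_j$ composes the inductive witness $\Phi(s) \to \Phi(t_j)$ with the canonical inclusion of $\Phi(t_j)$ into $\Phi(\circ[w(t_1,\dots,t_n)])$ at the $j$-th leaf of $\Psi(w)$. The rule coming from $\leq_{\mathcal{B}(\T(\mathcal{B}(\cdot)))}$ is handled by recursion on the binary-tree embedding witness: label-$1$ internal nodes go to label-$1$ internal nodes, and the label-$0$ leaf subtrees embed by the inductive hypothesis applied to the leaf-label inequalities; capping with the label-$0$ root gives an embedding of $\Phi(s)$ into $\Phi(t)$. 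The gap condition is vacuous at label-$0$ successor edges, and it is automatic at label-$1$ successor edges because the label-$1$ region of $\Phi(t)$ is exactly the $\Psi$-image of a single binary tree, so no label-$0$ node appears on a path internal to such a region.

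The reverse implication is the crux. I would show by induction on the size of $s$ that every gap-embedding $f \colon \Phi(s) \to \Phi(t)$ yields $s \leq_{\T(\mathcal{B}(\cdot))} t$. The case $s = \circ$ is immediate by rule (i). For $s = \circ[B_s]$, label preservation forces $f(r_s)$ to be a label-$0$ node of $\Phi(t)$, so either $f(r_s) = r_t$ or $f(r_s)$ lies inside some leaf subtree $\Phi(t'_j)$ of $\Psi(B_t)$. In the latter case the whole image of $\Phi(s)$ sits inside $\Phi(t'_j)$, so the inductive hypothesis gives $s \leq_{\T(\mathcal{B}(\cdot))} t'_j$ and rule (ii) produces $s \leq_{\T(\mathcal{B}(\cdot))} t$. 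If $f(r_s) = r_t$ and $B_s$ is a leaf $s'$, applying the induction hypothesis to $f$ restricted to $\Phi(s')$ gives $s' \leq_{\T(\mathcal{B}(\cdot))} t'_j$ for some leaf $t'_j$ of $B_t$, hence $B_s \leq_{\mathcal{B}(\T(\mathcal{B}(\cdot)))} B_t$ and rule (iii) applies. If $f(r_s) = r_t$ and $B_s$ is a proper binary tree, the gap condition on the edge from $r_s$ to the label-$1$ child of $r_s$ forces only label-$1$ intermediate nodes, so $f$ restricts to an embedding of the label-$1$ skeleton of $\Psi(B_s)$ into that of $\Psi(B_t)$ which preserves the left/right order and infima.

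The main obstacle lies in turning this skeleton embedding into a genuine $\mathcal{B}$-embedding of $B_s$ into $B_t$: a label-$0$ leaf of $\Psi(B_s)$ can be sent by $f$ not to the root of any $\Phi(t'_j)$ but to a label-$0$ node strictly inside one, so the assignment from $B_s$-leaves to $B_t$-leaves is not visible on the nose. I would use the infimum-preserving property together with left/right preservation to pin it down: distinct leaves of $\Psi(B_s)$ have their nearest common ancestor carried by $f$ to the nearest common ancestor of their images, which forces a well-defined and position-compatible assignment $i \mapsto j(i)$ such that $f$ sends the whole subtree $\Phi(s'_i)$ into $\Phi(t'_{j(i)})$. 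The inductive hypothesis then yields $s'_i \leq_{\T(\mathcal{B}(\cdot))} t'_{j(i)}$ for each $i$; combined with the skeleton embedding these produce $B_s \leq_{\mathcal{B}(\T(\mathcal{B}(\cdot)))} B_t$, and rule (iii) concludes $s \leq_{\T(\mathcal{B}(\cdot))} t$.
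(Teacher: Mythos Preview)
Your proposal is correct and follows essentially the same strategy as the paper: the bijection $\Phi$ is exactly the paper's map $g$, the forward direction is handled identically, and in the reverse direction both arguments use the gap condition to force the label-$1$ skeleton of $B_s$ into that of $B_t$ and then recover the leaf inequalities by the induction hypothesis. The only difference is packaging: where you extract the leaf assignment $i\mapsto j(i)$ by hand and apply the induction hypothesis directly, the paper first records the inequality $B(g(t_1),\dots,g(t_n)) \leq_{\mathcal{B}(\overline{\mathbb{T}}_2)} B'(g(t'_1),\dots,g(t'_m))$ and then invokes the Lifting Lemma (with $g$ as the quasi-embedding supplied by the induction hypothesis) to transport it to $\mathcal{B}(\T(\mathcal{B}(\cdot)))$---this is exactly the step you unpacked.
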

\begin{proof}
Define $g:\T(\mathcal{B}(\cdot)) \to \overline{\mathbb{T}}_2$ as follows. Let $g(\circ)$ be the tree which consists of one node with label $0$. Take $t= \circ[B(t_1,\dots,t_n)]$ with $B(t_1,\dots,t_n)$ a binary tree with leaf-labels in the set $\{t_1,\dots,t_n\}$ and assume that $g(t_1),\dots,g(t_n)$ is already defined. Set $g(t)$ then as the tree consisting of a root with label $0$, that root connected with an edge to the root of $B(t_1,\dots,t_n)$. Give all the internal nodes of $B$ label $1$ and plug $g(t_i)$ in the leaves of $B(t_1,\dots,t_n)$ with label $t_i$ for every $i$.
For example, if $t = \circ[B(\circ,\circ[B(\circ,\circ)])]$, with $B(a,b)$ equal to\\
\begin{picture}(1,66)
\put(60,20){\circle*{5}}
\put(42,50){\circle*{5}}
\put(78,50){\circle*{5}}
\put(60,20){\line(3,5){18}}
\put(60,20){\line(-3,5){18}}
\put(37,53){$a$}
\put(81,53){$b$}
\end{picture}
Then $g(t)$ is\\
\begin{picture}(1,156)
\put(60,20){\circle*{5}}
\put(60,20){\line(0,1){30}}
\put(60,50){\circle*{5}}
\put(42,80){\circle*{5}}
\put(78,80){\circle*{5}}
\put(60,50){\line(3,5){18}}
\put(60,50){\line(-3,5){18}}

\put(78,80){\line(0,1){30}}
\put(78,110){\circle*{5}}
\put(60,140){\circle*{5}}
\put(96,140){\circle*{5}}
\put(78,110){\line(3,5){18}}
\put(78,110){\line(-3,5){18}}

\put(35,83){$0$}
\put(81,83){$0$}
\put(53,143){$0$}
\put(99,143){$0$}
\put(64,17){$0$}

\put(82,107){$1$}
\put(64,47){$1$}
\end{picture}
It is easy to see that $g$ is surjective. If we can prove that 
\[t\leq_{\T(\mathcal{B}(\cdot))} t' \Leftrightarrow g(t) \leq_{\overline{\mathbb{T}}_2} g(t'),\] 
we are done. We will prove this by induction on the sum of complexities of $t$ and $t'$. If $t= \circ$ or $t' = \circ$, then this is trivial. Assume both $t$ and $t'$ are different from $\circ$. Let $t = \circ[B(t_1,\dots,t_n)]$ and $t' = \circ[B'(t'_1,\dots,t'_m)]$. If $t \leq_{\T(\mathcal{B}(\cdot))} t'$, then either $t \leq_{\T(\mathcal{B}(\cdot))} t'_i$ for a certain $i$ or $B(t_1,\dots,t_n) \leq_{\mathcal{B}({\T(\mathcal{B}(\cdot))})} B'(t'_1,\dots,t'_m)$. In both cases, the induction hypothesis yields $g(t) \leq_{\overline{\mathbb{T}}_2} g(t')$ quite easily.
Now assume $g(t) \leq_{\overline{\mathbb{T}}_2} g(t')$. We know that the root of $g(t)$, which has label $0$, is mapped on a node with label $0$. If it is not mapped onto the root of $g(t')$, then it is mapped onto a node with label $0$ in $g(t'_i)$ for a certain $i$. Hence $g(t) \leq_{\overline{\mathbb{T}}_2} g(t'_i)$, so $t\leq_{\T(\mathcal{B}(\cdot))} t'_i \leq_{\T(\mathcal{B}(\cdot))} t'$. Now assume that the root of $g(t)$ is mapped onto the root of $g(t')$. Every internal node $a$ of $B$ has to be mapped on an internal node of $B'$, because otherwise the label $0$ of the root of the $g(t_i)$ in which the internal node $a$ of $B$ is mapped, gives a contradiction with the gap-condition. Furthermore, every leaf of $B$, in which $g(t_i)$ are plugged in, has label $0$ and is mapped on a node in $g(t')$ with label $0$. We can conclude that $B(g(t_1),\dots,g(t_n)) \leq_{\mathcal{B}(\overline{\mathbb{T}}_2)} B'(g(t'_1),\dots,g(t'_m))$. The induction hypothesis yields that $g$ is a quasi-embedding from the set $\{t_1,\dots,t_n,t'_1,\dots,t'_m\}$ to $\overline{\mathbb{T}}_2$. So the Lifting Lemma implies $B(t_1,\dots,t_n) \leq_{\mathcal{B}({\T(\mathcal{B}(\cdot))})} B'(t'_1,\dots,t'_m)$. Hence,  $t\leq_{\T(\mathcal{B}(\cdot))}  t'$.

\end{proof}

From the previous lemma, one can actually already conclude that $\T(\mathcal{B}(\cdot))$ is a $\wpo$. Therefore, one can think that the well-partial-orderedness proof of $\T(\mathcal{B}(\cdot))$ in Theorem \ref{computation upper bound T(W) with W(X)=B(X)} is superfluous. However, this well-partial-orderedness proof does not need an extra argument: it follows from the calculation of an upper bound of the maximal order type of $\T(\mathcal{B}(\cdot))$. Therefore, we do not really waste efforts by stating it in Theorem \ref{computation upper bound T(W) with W(X)=B(X)}.


\section{Tree-structures below the Howard-Bachmann ordinal}\label{sec:Tree-structures below the Howard-Bachmann ordinal number}

In section \ref{sec:an order-theoretic approach of the Howard-Bachmann ordinal number}, we will show that $\T(\mathcal{B}(\cdot))$ is a $\wpo$ with maximal order type $\vartheta(\varepsilon_{\Omega+1})$. For obtaining these results, we need to approximate this $\wpo$. This is done from `below' and is treated in this section. The next theorems are generalizations of Theorems 9 and 10 in \cite{WellpartialorderingsandthebigVeblennumber}. The proofs follow the same procedures as in that article, but they are more involved. 

\begin{theorem}\label{Computation upper bound for T(sum(sum X^* X))2}  Suppose $\Y_{i,j,k}$ and $\Z_{i}$ are countable wpo's for all indices. If 
\begin{align*}
W(X) = \sum_{i=0}^N \left(\left(\sum_{j=0}^{k_{i,1}} \Y_{i,j,1} \times X^{j}\right)^* \times \dots \times \left(\sum_{j=0}^{k_{i,{n_i}}} \Y_{i,j,{n_i}} \times X^{j}\right)^*  \times X^{m_i} \times \Z_i \right)
\end{align*}
then $\T(W)$ is a $\wpo$ and $o(\T(W))\leq \vartheta (o(W(\Omega)))$.
\end{theorem}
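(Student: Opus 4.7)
The plan is to construct a reification, namely a quasi-embedding $o\colon \T(W) \to \vartheta(o(W(\Omega)))$, and then invoke Lemma~\ref{quasi-embedding} to deduce simultaneously that $\T(W)$ is a $\wpo$ and that $o(\T(W)) \leq \vartheta(o(W(\Omega)))$. The template is the one used for Theorems~9 and~10 of \cite{WellpartialorderingsandthebigVeblennumber}, but the richer form of $W$ here forces a substantially more careful bookkeeping.

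First I would fix, once and for all, a maximal linear extension of $W(\Omega)$ and identify it with the ordinal $o(W(\Omega))$. Then I would define $o$ by recursion on complexity: put $o(\circ) := 0$, and for $t = \circ[w(t_1,\dots,t_n)]$ assume by induction that each $o(t_j) < \vartheta(o(W(\Omega))) < \Omega$. Form the element $w(o(t_1),\dots,o(t_n)) \in W(\Omega)$, let $\alpha(t)$ be its rank under the fixed extension, and set $o(t) := \vartheta(\alpha(t))$. To guarantee $o(t) < \vartheta(o(W(\Omega)))$ via Lemma~\ref{main property theta-function} I must verify that $k(\alpha(t)) < \vartheta(o(W(\Omega)))$; this is done by propagating $k$ through the $+$, $\times$ and $(\cdot)^*$ constructors appearing in $W$ via Lemmas~\ref{coefficients under multiplicative sum and product} and~\ref{coefficients under *}, noting that each $o(t_j)$ is either $0$ or an element of $P$ below $\Omega$, so that $k(o(t_j)) = o(t_j) < \vartheta(o(W(\Omega)))$ by the induction hypothesis, while the coefficients coming from $\Y_{i,j,k}$ and $\Z_i$ are bounded thanks to their countability.

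The heart of the argument is the quasi-embedding verification: from $o(t) \leq o(t')$ one wants $t \leq_{\T(W)} t'$, proved by induction on the sum of complexities of $t$ and $t'$. The cases where one of $t,t'$ equals $\circ$ are immediate. Otherwise, writing $t = \circ[w(t_1,\dots,t_n)]$ and $t' = \circ[w'(t_1',\dots,t_m')]$, Lemma~\ref{main property theta-function} applied to $\vartheta(\alpha(t)) \leq \vartheta(\alpha(t'))$ yields two sub-cases. The main one is $\alpha(t) \leq \alpha(t')$ with $k(\alpha(t)) < \vartheta(\alpha(t'))$, which unfolds to $w(o(t_1),\dots,o(t_n)) \leq_{W(\Omega)} w'(o(t_1'),\dots,o(t_m'))$. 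Since the induction hypothesis promotes $o$ restricted to $\{t_1,\dots,t_n,t_1',\dots,t_m'\}$ to a quasi-embedding into $\Omega$, the Lifting Lemma~\ref{Lifting} transports this inequality back to $w(t_1,\dots,t_n) \leq_{W(\T(W))} w'(t_1',\dots,t_m')$, and clause~(3) of the definition of $\leq_{\T(W)}$ produces $t \leq_{\T(W)} t'$. The other sub-case supplies an index $j$ with $o(t) \leq o(t_j')$, to which the induction hypothesis plus clause~(2) of $\leq_{\T(W)}$ apply.

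The main obstacle I expect is the bookkeeping around the coefficient bound $k(\alpha(t)) < \vartheta(o(W(\Omega)))$. The constructor $W$ in this theorem nests sums, finite products $X^j$ of varying arities, countable parameter factors $\Y_{i,j,k}$ and $\Z_i$, and sequence constructions $(\cdot)^*$, so the $\Omega$-normal form of $\alpha(t)$ has a rather intricate shape. One must apply Lemma~\ref{coefficients under multiplicative sum and product} (all three clauses) together with Lemma~\ref{coefficients under *} in the right order at every nesting level, and verify at each stage that the strict inequality on $k$ is preserved. It is precisely this propagation, carried out uniformly across the many layers of $W$, that distinguishes the present statement from its simpler precursors in \cite{WellpartialorderingsandthebigVeblennumber}.
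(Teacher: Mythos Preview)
Your proposal contains a genuine gap at the central step. You fix a maximal linear extension of $W(\Omega)$, let $\alpha(t)$ be the rank of $w(o(t_1),\dots,o(t_n))$ in that extension, and then in the main sub-case claim that $\alpha(t)\le\alpha(t')$ ``unfolds to $w(o(t_1),\dots,o(t_n)) \leq_{W(\Omega)} w'(o(t_1'),\dots,o(t_m'))$''. This does not follow. A linear extension is order-\emph{preserving}, not order-\emph{reflecting}: from $\alpha(t)\le\alpha(t')$ you only get the inequality in the linearization, which is strictly weaker than the partial-order inequality in $W(\Omega)$ whenever the two elements are incomparable. But the Lifting Lemma requires precisely the partial-order inequality $\leq_{W(\Omega)}$ as input, so you cannot invoke it here. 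The same problem infects the other sub-case: since $\alpha(t')$ is the rank under an arbitrary linear extension, its coefficient set $K(\alpha(t'))$ bears no controlled relation to the values $o(t_j')$, so from $o(t)\le k(\alpha(t'))$ you cannot extract any index $j$ with $o(t)\le o(t_j')$.

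The paper circumvents this by never attempting a direct reification into ordinals. Instead it argues by induction on $o(W(\Omega))$ and, for each $t\in\T(W)$, analyses the left-set $L(t)$. A detailed case analysis of the failure of $t\le s$ is used to build a new functor $W'\in Map$ of the same syntactic shape (so still covered by the theorem) together with a quasi-embedding $g\colon L(t)\to\T(W')$; one then checks $o(W'(\Omega))<o(W(\Omega))$ and $k(o(W'(\Omega)))<\vartheta(o(W(\Omega)))$, so the induction hypothesis gives $l(t)\le\vartheta(o(W'(\Omega)))<\vartheta(o(W(\Omega)))$, and Theorem~\ref{dejonghandparikh2} finishes. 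The point is that the quasi-embedding lives between two $\T$-structures (where the Lifting Lemma genuinely applies), not between $\T(W)$ and a linear order.
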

\begin{proof} We will prove the theorem by main induction on the ordinal $o(W(\Omega))$. 
If $W(X)$ is the empty $\wpo$ for every $X$, then the theorem follows easily. We can now assume without loss of generality that $\Z_i$, $\Y_{i,k_{i,j},j}$ are non-empty and $k_{i,j} >0$ for all $i$ and $l$.
If $o(W(\Omega)) < \Omega$, then $\cdot$ does not occur in $W$. Therefore, $n_i = m_i=0$ for all $i$. Hence, $W(X)$ is equal to a $\wpo$ $\sum_{i=0}^N \Z_i =: \Z$. So $\T(W) \cong \Z \cup \{0\}$, where $0$ is a new element smaller than every element in $\Z$. This yields that $\T(W)$ is a $\wpo$ and $o(\T(W)) \leq o(\Z) +1 \leq \vartheta(o(\Z)) =  \vartheta(o(W(\Omega)))$.\\
Assume from now on that $o(W(\Omega)) \geq \Omega$. 
We will prove that $L(t)$ is a $\wpo$ and $l(t) < \vartheta(o(W(\Omega)))$ for every $t$ in $\T(W)$. Then the theorem follows from Lemma \ref{dejonghandparikh2}. If $t=\circ$, then $L(t)$ is the empty $\wpo$ and $l(t)=0<\vartheta(o(W(\Omega)))$. Assume now 
\[t=\circ((\overline{t_1},\dots,\overline{t_{n_l}}),(t_1,\dots,t_{m_l}),z)\] 
with $z \in \Z_{l}$, $t_p \in \T(W)$ and $\overline{t_p}$ an element in $\left(\sum_{j=0}^{k_{l,p}} \Y_{l,j,p} \times \T(W)^{j}\right)^*$, meaning
\begin{align*}
\overline{t_p}= (\overline{\overline{t^p_1}},\dots, \overline{\overline{t^p_{r_p}}}),
\end{align*}
with $\overline{\overline{t^p_q}}\in \sum_{j=0}^{k_{l,p}} \Y_{l,j,p} \times \T(W)^{j}$. So
\begin{align*}
\overline{\overline{t^p_q}} = \left(y_{p,q} , \left(t^{p,q}_1,\dots, t^{p,q}_{v_{p,q}}\right)\right)
\end{align*}
with $y_{p,q} \in \Y_{l,v_{p,q},p}$ and $v_{p,q} \leq k_{l,p}$. Assume $l(t_i), l(t^{i,j}_{k})<\vartheta(o(W(\Omega)))$ and $L(t_i)$ and $L(t^{i,j}_k)$ are $\wpo$'s. We want to prove that $L(t)$ is a $\wpo$ and $l(t)<\vartheta(o(W(\Omega)))$.

\medskip

Suppose $s$ is an arbitrary element in $\T(W)$, different from $\circ$. Then
\begin{align}
\nonumber s&=\circ((\overline{s_1},\dots,\overline{s_{n_{l'}}}),(s_1,\dots,s_{m_{l'}}),z'),\\
\label{definition s in sumX^*}\overline{s_p} &= (\overline{\overline{s^p_1}},\dots,\overline{\overline{s^p_{r'_p}}}),\\
\nonumber \overline{\overline{s^p_q}} &= \left(y'_{p,q},(s^{p,q}_1,\dots, s^{p,q}_{w_{p,q}})\right),
\end{align}
with
\begin{align*}
z' &\in  \Z_{l'},\\
y'_{p,q} & \in  \Y_{l',w_{p,q},p}.
\end{align*}

\medskip 

We see that $s\in L(t)$ iff $s_i \in L(t)$, $s^{i,j}_k \in L(t)$ and one of the following holds:
\begin{enumerate}
\item[a.] $l \neq l'$,
\item[b.] $l = l'$, $z' \in L_{\Z_l}(z)$,
\item[c.]\label{nummer 1} $l = l'$, $z \leq_{\Z_l} z'$, $(t_1,\dots,t_{m_l}) \not\leq (s_1,\dots,s_{m_{l'}})$,
\item[d.]\label{nummer 2} $l = l'$, $z \leq_{\Z_l} z'$, $(t_1,\dots,t_{m_l}) \leq (s_1,\dots,s_{m_{l'}})$, $(\overline{t_1},\dots,\overline{t_{n_l}}) \not\leq (\overline{s_1},\dots,\overline{s_{n_{l'}}})$,
\end{enumerate}
Now if (c.) holds, there must be a minimal index $m(s) \leq m_l$ such that
\begin{align*}
t_1\leq s_1 ,\dots, t_{m(s)-1}\leq s_{m(s)-1}, t_{m(s)} \not\leq s_{m(s)}.
\end{align*}
If (d.) is valid, there must be a minimal index $n(s) \leq n_l$ such that
\begin{align*}
\overline{t_1}\leq \overline{s_1} ,\dots, \overline{t_{n(s)-1}}\leq \overline{s_{n(s)-1}}, \overline{t_{n(s)}} \not\leq \overline{s_{n(s)}}.
\end{align*}
By similar arguments,
\begin{align*}
\overline{t_{n(s)}} =(\overline{\overline{t^{n(s)}_1}},\dots,\overline{\overline{t^{n(s)}_{r_{n(s)}}}})\not\leq (\overline{\overline{s^{n(s)}_1}},\dots,\overline{\overline{s^{n(s)}_{r'_{n(s)}}}})=\overline{s_{n(s)}}
\end{align*}
holds iff we are in one of the next cases
\begin{enumerate}[\ \ \ a.]
\item[$1$.] $\overline{\overline{t^{n(s)}_1}} \not\leq \overline{\overline{s^{n(s)}_i}}$ for every $i$,
\item[$2$.] there exists an index $i_1$ such that $\overline{\overline{t^{n(s)}_1}} \not\leq \overline{\overline{s^{n(s)}_i}}$ for every $i<i_1$, $\overline{\overline{t^{n(s)}_1}} \leq \overline{\overline{s^{n(s)}_{i_1}}}$ and $\overline{\overline{t^{n(s)}_2}} \not\leq \overline{\overline{s^{n(s)}_i}}$ for every $i>i_1$,
\item[] \dots
\item[$r_{n(s)}$.] there exist indices $i_1 < \dots < i_{r_{n(s)}-1}$ such that $\overline{\overline{t^{n(s)}_1}} \not\leq \overline{\overline{s^{n(s)}_i}}$ for every $i<i_1$, $\overline{\overline{t^{n(s)}_1}} \leq \overline{\overline{s^{n(s)}_{i_1}}}$ and $\overline{\overline{t^{n(s)}_2}} \not\leq \overline{\overline{s^{n(s)}_i}}$ for every $i_2>i>i_1$,\dots,$\overline{\overline{t^{n(s)}_{r_{n(s)}-1}}} \leq \overline{\overline{s^{n(s)}_{i_{r_{n(s)}-1}}}}$ and $\overline{\overline{t^{n(s)}_{r_{n(s)}}}} \not\leq \overline{\overline{s^{n(s)}_i}}$ for every $i>i_{r_{n(s)}-1}$.
\end{enumerate}
Now,
\begin{align*}
\overline{\overline{t^{n(s)}_{i}}}= \left(y_{n(s),i} , \left(t^{n(s),i}_1,\dots, t^{n(s),i}_{v_{n(s),i}}\right)\right)  \not\leq \left(y'_{n(s),j},(s^{n(s),j}_1,\dots ,s^{n(s),j}_{w_{n(s),j}})\right) =\overline{\overline{s^{n(s)}_{j}}}
\end{align*}
holds iff one of the following is valid
\begin{enumerate}
\item $v_{n(s),i} \neq w_{n(s),j}$,
\item $v_{n(s),i} = w_{n(s),j}$ and $y_{n(s),i} \not\leq_{\Y_{l, v_{n(s),i},n(s)}} y'_{n(s),j}$,
\item $v_{n(s),i} = w_{n(s),j}$, $y_{n(s),i} \leq_{\Y_{l,v_{n(s),i},n(s)}} y'_{n(s),j}$ and there exists a minimal index $p_{i,j}(s) \leq v_{n(s),i}$ such that 
\[
t^{n(s),i}_1 \leq s^{n(s),j}_1,\dots, t^{n(s),i}_{p_{i,j}(s)-1} \leq s^{n(s),j}_{p_{i,j}(s)-1},
t^{n(s),i}_{p_{i,j}(s)} \not\leq s^{n(s),j}_{p_{i,j}(s)}.
\]
\end{enumerate}
We just completely characterized $L(t)$. Using this characterization, we define the following constructor $W'(X)$ in $Map$: let $W'(X)$ be $W'_1(X)+ W'_2(X)$ with $W'_1(X)$ equal to
\begin{align*}
 &\sum_{i=0, i \neq l}^N \left(\left(\sum_{j=0}^{k_{i,1}} \Y_{i,j,1} \times X^{j}\right)^* \times \dots \times \left(\sum_{j=0}^{k_{i,{n_i}}} \Y_{i,j,{n_i}} \times X^{j}\right)^*  \times X^{m_i} \times \Z_i \right)\\
+& \left(\left(\sum_{j=0}^{k_{l,1}} \Y_{l,j,1} \times X^{j}\right)^* \times \dots \times \left(\sum_{j=0}^{k_{l,{n_l}}} \Y_{l,j,{n_l}} \times X^{j}\right)^*  \times X^{m_l} \times L_{\Z_l}(z) \right)\\
+& \sum_{m=1}^{m_l} \left(\left(\sum_{j=0}^{k_{l,1}} \Y_{l,j,1} \times X^{j}\right)^* \times \dots \times \left(\sum_{j=0}^{k_{l,{n_l}}} \Y_{l,j,{n_l}} \times X^{j}\right)^* \right. \\
& \ \ \ \ \ \ \ \  \ \ \left. {}^{{}^{{}^{{}^{{}^{{}^{{}^{{}^{{}^{{}}}}}}}}}}   \times \,  X^{m_l-1} \times L_{\T(W)}(t_m) \times \Z_l \right)
\end{align*}
and $W'_2(X)$ equal to
\begin{align*}
&\sum_{n=1}^{n_l} \sum_{q=1}^{r_n} \left[\left(\sum_{j=0}^{k_{l,1}} \Y_{l,j,1} \times X^{j}\right)^* \times \dots \times \left(\sum_{j=0}^{k_{l,n-1}} \Y_{l,j,n-1} \times X^{j}\right)^*  \right.\\ 
 &\ \ \ \ \ \ \ \ \ \ \ \ \ \ \ \  \times \left(\sum_{j=0}^{k_{l,n+1}} \Y_{l,j,n+1} \times X^{j}\right)^* \times \dots  \times \left(\sum_{j=0}^{k_{l,{n_l}}} \Y_{l,j,{n_l}} \times X^{j}\right)^*  \times X^{m_l} \times \Z_l \\ 
&\ \ \ \ \ \ \ \ \ \ \ \ \ \  \ \ \times \left. \left(\sum_{j=0}^{k_{l,n}}  \Y_{l,j,n} \times X^{j}\right)^{q-1} \times (W'^1_2(X))^* \times \dots \times (W'^q_2(X))^* \right],\\
\end{align*}
with 
\begin{align*}
W'^i_2(X) &:=
\sum_{j=0,j\neq v_{n,i}}^{k_{l,n}} \left( \Y_{l,j,n} \times X^{j}\right)
\\
&  \ \ \ \ \ \ \ \ + \left( L_{\Y_{l,v_{n,i},n}}(y_{n,i}) \times X^{v_{n,i}}\right)  + \sum_{p=1}^{v_{n,i}}   \left( \Y_{l,v_{n,i},n} \times X^{v_{n,i}-1} \times L_{\T(W)}(t^{n,i}_{p})  \right). 
\end{align*}
The three cases separated by the sign $+$ in $W'_1(X)$ corresponds to the cases (a.), (b.) and (c.). The index $m$ in the third line of $W'_1(X)$ matches with $m(s)$. $W'_2(X)$ corresponds to case (d.). The index $n$ in $W'_2(X)$ matches with $n(s)$ and the index $q$ corresponds to the cases 1.-\dots-$r_{n(s)}$ in which we are for $\overline{t_{n(s)}}\not\leq \overline{s_{n(s)}}$. $W'^i_2(X)$ matches with 
$\overline{\overline{t^{n(s)}_{i}}} \not\leq \overline{\overline{s^{n(s)}_{j}}}$, where $p$ in $W'^i_2(X)$ corresponds to $p_{i,j}(s)$.

\medskip
Now, following the here-described characterization of $L(t)$ thoroughly step-by-step, one can see that there exists a mapping $f$ from $\{w(s_1,\dots,s_n) \in W(\T(W)): \circ[w(s_1,\dots,s_n)] \in L(t) \}$ into $W'(\T(W))$ such that if we have $f(w(s_1,\dots,s_n)) = w'(s'_1,\dots,s'_m)$, then $\{s'_1,\dots,$ $s'_m\} \subseteq \{s_1,\dots,s_n\} \subseteq L(t)$ and if the inequality $f(w(s_1,\dots,s_n))  \leq_{W'(\T(W))} f(\overline{w}(\overline{s_1},\dots,\overline{s_k}))$ holds, then $w(s_1,\dots,s_n)  \leq_{W(\T(W))} \overline{w}(\overline{s_1},\dots,\overline{s_k})$. We do not explicitly write out the full details of this argument because in Theorem \ref{computation upper bound T(W) with W(X)=B(X)} we will do a similar proof (written out in full details) and that proof is less messy.

\medskip

We pinpoint a mapping $g$ from $L(t)$ into $\T(W')$. This mapping will be a quasi-embedding. We do this by induction on the complexity of the terms in $L(t)$. Let $g(\circ)$ be $\circ$. Let $s$ be an element of $L(t)$, defined as in (\ref{definition s in sumX^*}). By induction, we can assume that $g(s_i)$ and $g(s^{i,j}_k)$ are already defined. We know that $s$ is equal to 
\[\circ\left[w(s_1,\dots,s_{m_{l'}},s^{1,1}_1,\dots, s^{n_{l'},r'_{n_{l'}}}_{w_{n_{l'},r'_{n_{l'}}}})\right]\] 
for a certain element $w(\cdot,\dots,\cdot)$ in $W$. Denote 
\[f(\times s) = f\left(w(s_1,\dots,s_{m_{l'}},s^{1,1}_1,\dots, s^{n_{l'},r'_{n_{l'}}}_{w_{n_{l'},r'_{n_{l'}}}})\right)\]
as $w'(s'_1,\dots,s'_m) \in W'(L(t))$ with $\{s'_1,\dots,s'_m\} \subseteq \left\{s_1,\dots,s_{m_{l'}},s^{1,1}_1,\dots, s^{n_{l'},r'_{n_{l'}}}_{w_{n_{l'},r'_{n_{l'}}}}\right\}$. Define $g(s)$ as $\circ[w'(g(s'_1),\dots,g(s'_m))]$. We want to prove that the mapping $g$ is a quasi-embedding.

\medskip

We prove by induction on the sum of the complexities of $s$ and $\overline{s}$ that the inequality $g(s) \leq_{\T(W')} g(\overline{s})$ implies $s \leq_{\T(W)} \overline{s}$. If $s$ or $\overline{s}$ is equal to $\circ$, then this is trivial. Now, let $s = \circ[w(s_1,\dots,s_n)]$ and $\overline{s} = \circ[\overline{w}(\overline{s}_1,\dots,\overline{s}_{\overline{n}})]$. Assume that $f(w(s_1,\dots,s_n))$ is equal to $w'(s'_1,\dots,s'_m)$ and  $f(\overline{w}(\overline{s}_1,\dots,\overline{s}_{\overline{n}}))$ is $\overline{w'}(\overline{s}'_1,\dots,\overline{s}'_{\overline{m}})$. So we have 
\[\circ[w'(g(s'_1),\dots,g(s'_m))]= g(s) \leq_{\T(W')} g(\overline{s}) = \circ[\overline{w'}(g(\overline{s}'_{1}), \dots, g(\overline{s}'_{\overline{m}}))].\]
Then either $g(s)  \leq_{\T(W')} g(\overline{s}'_q)$ for a certain $q$ or
\[w'(g(s'_1),\dots,g(s'_m)) \leq_{W'(\T(W'))}  \overline{w'}(g(\overline{s}'_{1}), \dots, g(\overline{s}'_{\overline{m}})).\]
In the first case, $s \leq \overline{s}'_q \leq \overline{s}$. In the latter case, the induction hypothesis yields that $g$ is a quasi-embedding from the set $\{s'_1,\dots,s'_m,\overline{s}'_{1}, \dots, \overline{s}'_{\overline{m}}\}$ to $\T(W')$. Hence, the Lifting Lemma implies $w'(s_1,\dots,s'_m) \leq_{W'(\T(W))} \overline{w'}(\overline{s}'_1, \dots, \overline{s}'_{\overline{m}})$. So $f(\times s) \leq f(\times \overline{s})$, hence $\times s \leq \times \overline{s}$. From this we can conclude that $s \leq \overline{s}$.

\medskip

So $g$ is a quasi-embedding from $L(t)$ in $\T(W')$. If $o(W'(\Omega)) < o(W(\Omega))$, the main induction hypothesis yields $\T(W')$ is a $\wpo$ and $o(\T(W')) \leq \vartheta(o(W'(\Omega)))$. Therefore, using Lemma \ref{quasi-embedding}, $L(t)$ is a $\wpo$ and $o(L(t))\leq \vartheta(o(W'(\Omega)))$. 
If additio\-nally $k(o(W'(\Omega))) < \vartheta(o(W(\Omega)))$, we can conclude that the inequality $o(L(t)) < \vartheta(o(W(\Omega)))$ holds, the objective that we want to achieve. Thus if we can prove that $o(W'(\Omega)) < o(W(\Omega))$ and $k(o(W'(\Omega))) < \vartheta(o(W(\Omega)))$, we can end the proof of this theorem.

\medskip

\underline{\textbf{1) $o(W'(\Omega)) < o(W(\Omega))$.}}\\
For notational convenience, we write $\Y$ instead of $o(\Y)$ for $\wpo$'s $\Y$. Additionally, we write $\alpha^*$ instead of $o(\alpha^*)$ for ordinal numbers $\alpha$. 
\begin{align}
\nonumber o(W'^i_2(\Omega)) &< \bigoplus_{j=0,j\neq v_{n,i}}^{k_{l,n}} \left( \Y_{l,j,n} \otimes \Omega^{j}\right)
\oplus \left( l_{\Y_{l,v_{n,i},n}}(y_{n,i}) \otimes \Omega^{v_{n,i}}\right) \oplus \Omega^{v_{n,i}}\\
\label{strictinequalityW'^i_2} &\leq 
\bigoplus_{j=0}^{k_{l,n}} \left( \Y_{l,j,n} \otimes \Omega^{j}\right).
\end{align}
We know that $\left(\bigoplus_{j=0}^{k_{l,n}}  \Y_{l,j,n} \otimes \Omega^{j}\right)^{*}$ is a multiplicative closed ordinal. Therefore, inequality (\ref{strictinequalityW'^i_2}) yields
\begin{align*}
 \left(\bigoplus_{j=0}^{k_{l,n}}  \Y_{l,j,n} \otimes \Omega^{j}\right)^{q-1} \otimes o(W'^1_2(\Omega)^*) \otimes \dots \otimes o(W'^q_2(\Omega)^*)< \left(\bigoplus_{j=0}^{k_{l,n}}  \Y_{l,j,n} \otimes \Omega^{j}\right)^{*}.
\end{align*}
From the assumption that $k_{l,n} >0$ and $\Y_{l,k_{l,n},n} \neq \emptyset$, we also have
\begin{align*}
 &{}\Z_l \otimes \left(\bigoplus_{j=0}^{k_{l,n}}  \Y_{l,j,n} \otimes \Omega^{j}\right)^{q-1} \otimes o(W'^1_2(\Omega)^*) \otimes \dots \otimes o(W'^q_2(\Omega)^*)\\
 <{}&{} \left(\bigoplus_{j=0}^{k_{l,n}}  \Y_{l,j,n} \otimes \Omega^{j}\right)^{*}.
\end{align*}
Because 
\[\left(\bigoplus_{j=0}^{k_{l,1}} \Y_{l,j,1} \otimes \Omega^{j}\right)^* \otimes \dots \otimes \left(\bigoplus_{j=0}^{k_{l,{n_l}}} \Y_{l,j,{n_l}} \otimes \Omega^{j}\right)^*  \otimes \Omega^{m_l} \] 
is additive closed, we obtain
\begin{align*}
o(W'_2(X)) < \left(\bigoplus_{j=0}^{k_{l,1}} \Y_{l,j,1} \otimes \Omega^{j}\right)^* \otimes \dots \otimes \left(\bigoplus_{j=0}^{k_{l,{n_l}}} \Y_{l,j,{n_l}} \otimes \Omega^{j}\right)^*  \otimes \Omega^{m_l}
\end{align*}
and
\begin{align*}
 &\bigoplus_{m=1}^{m_l} \left( \left(\bigoplus_{j=0}^{k_{l,1}} \Y_{l,j,1} \otimes \Omega^{j}\right)^* \otimes \dots \otimes \left(\bigoplus_{j=0}^{k_{l,{n_l}}} \Y_{l,j,{n_l}} \otimes \Omega^{j}\right)^* \right. \\
& \ \ \ \ \ \ \ \ \ \ \left.  {}^{{}^{{}^{{}^{{}^{{}^{{}^{{}^{{}^{{}}}}}}}}}} \otimes \Omega^{m_l-1} \otimes L_{\T(W)}(t_m) \otimes \Z_l \right) \oplus o(W'_2(X)) \\
 < & \, \, \left(\bigoplus_{j=0}^{k_{l,1}} \Y_{l,j,1} \otimes \Omega^{j}\right)^* \otimes \dots \otimes \left(\bigoplus_{j=0}^{k_{l,{n_l}}} \Y_{l,j,{n_l}} \otimes \Omega^{j}\right)^*  \otimes \Omega^{m_l}.
\end{align*}
Therefore,
\begin{align*}
&o(W'(\Omega))\\
 < & \bigoplus_{i=0, i \neq l}^N \left(\left(\bigoplus_{j=0}^{k_{i,1}} \Y_{i,j,1} \otimes \Omega^{j}\right)^* \otimes \dots \otimes \left(\bigoplus_{j=0}^{k_{i,{n_i}}} \Y_{i,j,{n_i}} \otimes \Omega^{j}\right)^*  \otimes \Omega^{m_i} \otimes \Z_i \right)\\
\oplus &\left(\left(\bigoplus_{j=0}^{k_{l,1}} \Y_{l,j,1} \otimes \Omega^{j}\right)^* \otimes \dots \otimes \left(\bigoplus_{j=0}^{k_{l,{n_l}}} \Y_{l,j,{n_l}} \otimes \Omega^{j}\right)^*  \otimes \Omega^{m_l} \otimes l_{\Z_l}(z) \right)\\
\oplus & \left(\left(\bigoplus_{j=0}^{k_{l,1}} \Y_{l,j,1} \otimes \Omega^{j}\right)^* \otimes \dots \otimes \left(\bigoplus_{j=0}^{k_{l,{n_l}}} \Y_{l,j,{n_l}} \otimes \Omega^{j}\right)^*  \otimes \Omega^{m_l}  \right)\\
\leq {}&{} o(W(\Omega)).
\end{align*}

\medskip

\underline{\textbf{2) $k(o(W'(\Omega))) < \vartheta(o(W(\Omega)))$.}}\\
We know that $o(W(\Omega)) \geq \Omega$, hence $\vartheta(o(W(\Omega)))$ is an epsilon number. So from Lemmas \ref{coefficients under multiplicative sum and product} and \ref{coefficients under *} the claim follows if $\Y_{i,j,k}$, $\Z_i$, $l_{\T(W)}(t_i)$ and $l_{\T(W)}(t^{i,j}_k)$ are all smaller than $\vartheta(o(W(\Omega)))$. We know that this is true for $l_{\T(W)}(t_i)$ and $l_{\T(W)}(t^{i,j}_k)$, by the sub-induction hypothesis. Furthermore, we have $\Y_{i,j,k}, \Z_k \leq k(o(W(\Omega))) < \vartheta(o(W(\Omega)))$ using Lemma \ref{coefficients under multiplicative sum and product}.
 
\end{proof}

\begin{theorem}\label{Computation upper bound for T(X**)} If $W(X)={X^*}^*$, then $\T(W)$ is a $\wpo$ and $o(\T(W))\leq \vartheta (o(W(\Omega))) $ $= \vartheta\left(\Omega^{\Omega^{\Omega^{\omega}}}\right)$.
\end{theorem}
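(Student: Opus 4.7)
The plan is to mimic the proof template of Theorem \ref{Computation upper bound for T(sum(sum X^* X))2}: by induction on the complexity of $t \in \T(W)$, show that $L(t)$ is a $\wpo$ with $l(t) < \vartheta(o(W(\Omega)))$, then invoke Lemma \ref{dejonghandparikh2}. First I verify the displayed equality: by Theorem \ref{maximal order type sum product and higman}, since $\Omega$ is an epsilon number, $o(\Omega^*) = \omega^{\omega^{\Omega+1}} = \omega^{\Omega \cdot \omega} = \Omega^\omega$; since $\Omega^\omega$ is not of the form $\varepsilon + n$, the ``otherwise'' branch gives $o(W(\Omega)) = o((\Omega^*)^*) = \omega^{\omega^{\Omega^\omega}}$, which rewrites to $\Omega^{\Omega^{\Omega^\omega}}$ using $\omega^\Omega = \Omega$ and $\omega^{\Omega \cdot \beta} = \Omega^\beta$.

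The base case $t = \circ$ is trivial. Otherwise write $t = \circ[(\bar t^1, \ldots, \bar t^n)]$ with $\bar t^i = (t^i_1, \ldots, t^i_{n_i}) \in \T(W)^*$, and take $s = \circ[(\bar u^1, \ldots, \bar u^{n'})] \neq \circ$. Then $s \in L(t)$ iff every $\T(W)$-entry appearing in $\times s$ lies in $L(t)$, together with the failure of the double Higman comparison $(\bar t^1, \ldots, \bar t^n) \leq^* (\bar u^1, \ldots, \bar u^{n'})$. This failure is analysed in two nested layers exactly as the failure of $\overline{t_{n(s)}} \not\leq \overline{s_{n(s)}}$ is handled inside the proof of Theorem \ref{Computation upper bound for T(sum(sum X^* X))2}: at the outer layer, a minimal position $n(s) \leq n$ and a sub-index $q \leq r_{n(s)}$ record how many prefix blocks $\bar t^1, \ldots, \bar t^{q-1}$ have already been matched greedily; at the inner layer, the residual obstruction $\bar t^{n(s)} \not\leq^* \bar u^j$ for each surviving block again decomposes into an $r$-sub-case analysis yielding either a length mismatch between two $\T(W)$-sequences or a strictly smaller entry living in some $L_{\T(W)}(t^{n(s)}_{p})$.

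From this characterisation I would read off a $W' \in Map$ whose typical summand has the shape
\begin{align*}
(X^*)^{q-1} \times \bigl(W'^{1}(X)\bigr)^* \times \cdots \times \bigl(W'^{q}(X)\bigr)^*,
\end{align*}
where each $W'^i(X)$ is a finite sum of $X^m$-powers (various $m$) and factors $X^{m-1} \times L_{\T(W)}(t^{i}_{p})$. Because $W'$ lies in the schema of Theorem \ref{Computation upper bound for T(sum(sum X^* X))2} (with trivial $\Y$- and $\Z$-factors), applying that theorem yields $\T(W')$ a wpo with $o(\T(W')) \leq \vartheta(o(W'(\Omega)))$. A recursive map $g : L(t) \to \T(W')$, $g(\circ) = \circ$ and $g(\circ[\times s]) = \circ[w'(g(s'_1), \ldots, g(s'_m))]$ with $w'(s'_1, \ldots, s'_m)$ the $W'$-term produced by the case dispatch $f$ on $\times s$, is then verified to be a quasi-embedding by induction on the sum of complexities together with the Lifting Lemma (Lemma \ref{Lifting}); Lemma \ref{quasi-embedding} gives $L(t)$ a wpo with $l(t) \leq \vartheta(o(W'(\Omega)))$.

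The main obstacle is the arithmetic bookkeeping: checking $o(W'(\Omega)) < o(W(\Omega))$ and $k(o(W'(\Omega))) < \vartheta(o(W(\Omega)))$, which via Lemma \ref{main property theta-function} deliver $\vartheta(o(W'(\Omega))) < \vartheta(o(W(\Omega)))$. For the first, each summand is bounded above by $(\Omega^*)^{q-1} \otimes \bigotimes_i (W'^i(\Omega))^*$, and since $\Omega^* = \Omega^\omega$ is principal multiplicative and each $o(W'^i(\Omega))$ is strictly below $\Omega^*$ (replacing one $X^{m}$-block by a strictly smaller left-set is a strict decrease), the whole expression stays strictly below $(\Omega^*)^* = o(W(\Omega))$; this mirrors part (1) of the proof of Theorem \ref{Computation upper bound for T(sum(sum X^* X))2}. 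For the second, $\vartheta(o(W(\Omega)))$ is an epsilon number (since $o(W(\Omega)) \geq \Omega$), so Lemmas \ref{coefficients under multiplicative sum and product} and \ref{coefficients under *} propagate the $k$-bounds through $\oplus$, $\otimes$ and $(\cdot)^*$, while the induction hypothesis on complexity supplies $l_{\T(W)}(t^{i}_{j}) < \vartheta(o(W(\Omega)))$ for each child, controlling the $k$-contribution of the constituent left-set factors. Finally Lemma \ref{dejonghandparikh2} concludes $o(\T(W)) = \sup_t (l(t) + 1) \leq \vartheta(o(W(\Omega)))$.
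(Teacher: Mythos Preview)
Your overall plan matches the paper: induct on the complexity of $t$, characterize $L(t)$, build a $W'\in Map$ fitting the schema of Theorem~\ref{Computation upper bound for T(sum(sum X^* X))2}, define a recursive $g:L(t)\to\T(W')$, verify it is a quasi-embedding via the Lifting Lemma, and finish with the ordinal arithmetic. The outer-layer decomposition (how many blocks $\bar t^1,\dots,\bar t^{q-1}$ are greedily matched, contributing $(X^*)^{q-1}$ factors for the matched positions) is also right.

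The gap is at the inner layer. The failure $\bar t^{i}=(t^i_1,\dots,t^i_{n_i})\not\leq^* \bar u^{j}$ is a Higman-embedding failure between sequences of \emph{arbitrary} length, not a tuple comparison; there is no ``length mismatch'' case here. The correct analysis (carried out in the paper) is again a greedy one: in sub-case $r$ there are matched positions $r_1<\cdots<r_{r-1}$ inside $\bar u^{j}$, and every entry of $\bar u^{j}$ in the $p$-th gap lies in $L(t^i_p)$. Hence each failed block $\bar u^{j}$ is encoded as an element of
\[
\sum_{r=1}^{n_i}\; L(t^i_1)^*\times X\times L(t^i_2)^*\times X\times\cdots\times X\times L(t^i_r)^*,
\]
so the inner factor must contain the starred left-sets $L(t^i_p)^{*}$, interleaved with copies of $X$. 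Your proposed $W'^{i}(X)$, a finite sum of $X^m$ and $X^{m-1}\times L(t^i_p)$, cannot absorb $\bar u^{j}$'s of unbounded length: if $\bar u^{j}$ has length $10^6$ with every entry in $L(t^i_1)$, there is no way to place it in a fixed finite sum of $X^m$-powers. Consequently your map $f$ from $\{\times s:s\in L(t)\}$ into $W'(\T(W))$ is not well-defined, and the quasi-embedding $g$ cannot be built. Once $W'^{i}$ is corrected to the form above, everything else you wrote (fit to the schema of Theorem~\ref{Computation upper bound for T(sum(sum X^* X))2}, the $o$- and $k$-bounds via Lemmas~\ref{coefficients under multiplicative sum and product} and~\ref{coefficients under *}) goes through essentially as in the paper.
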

\begin{proof}
We show that $L(t)$ is a $\wpo$ and $l(t) < \vartheta\left(\Omega^{\Omega^{\Omega^{\omega}}}\right)$ for every $t$ in $\T(W)$  by induction on the complexity of $t$. The theorem then follows from Theorem \ref{dejonghandparikh2}. If $t=\circ$, then left-set $L(t)$ is the empty $\wpo$ and $l(t)=0<\vartheta\left(\Omega^{\Omega^{\Omega^{\omega}}}\right)$.  Assume now that $t=\circ((t_1^1,\dots,t^1_{n_1}),\dots,(t_1^k,\dots,t_{n_k}^k))$. From the induction hypothesis, we know that $L(t^i_j)$ are $\wpo$'s and $l(t^i_j)< \vartheta (o(W(\Omega)))$. Assume \[s= \circ((s^1_1,\dots,s^1_{m_1}),\dots,(s_1^l,\dots,s_{m_l}^l)).\] Then $t \leq s$ iff $t\leq s_j^i$ for certain $i$ and $j$ or
\begin{align*}
((t_1^1,\dots,t^1_{n_1}),\dots,(t_1^k,\dots,t_{n_k}^k)){\leq^*}^*((s^1_1,\dots,s^1_{m_1}),\dots,(s_1^l,\dots,s_{m_l}^l)).
\end{align*}
Hence, $s \in L(t)$ iff $s_j^i \in L(t)$ for every $i$ and $j$ and one of the following holds
\begin{enumerate}
\item[1.] $(t_1^1,\dots,t_{n_1}^1) \not\leq^* (s^i_1,\dots,s^i_{n_i})$  for every $i$,
\item[2.] there exists an index $l_1$ such that $(t_1^1,\dots,t_{n_1}^1) \not\leq^* (s^i_1,\dots,s^i_{m_i})$  for every $i<l_1$, $(t_1^1,\dots,t_{n_1}^1) \leq^* (s^{l_1}_1,\dots,s^{l_1}_{m_{l_1}})$ and $(t_1^2,\dots,t_{n_2}^2) \not\leq^* (s^i_1,\dots,s^i_{m_i})$ for every $i> l_1$,
\item[3.] there exist indices $l_1<l_2$ such that $(t_1^1,\dots,t_{n_1}^1) \not\leq^* (s^i_1,\dots,s^i_{m_i})$  for every $i<l_1$, $(t_1^1,\dots,t_{n_1}^1) \leq^* (s^{l_1}_1,\dots,s^{l_1}_{m_{l_1}})$, $(t_1^2,\dots,t_{n_2}^2) \not\leq^* (s^i_1,\dots,s^i_{m_i})$ for every $l_1< i < l_2$, $(t_1^2,\dots,t_{n_2}^2) \leq^* (s^{l_2}_1,\dots,s^{l_2}_{m_{l_2}})$ and $(t_1^3,\dots,t_{n_3}^3) \not\leq^* (s^i_1,\dots,s^i_{m_i})$ for every $i>l_2 $,
\item[] $\dots$
\item[k.] there exist indices $l_1<\dots<l_{k-1}$ such that $(t_1^1,\dots,t_{n_1}^1) \not\leq^* (s^i_1,\dots,s^i_{m_i})$  for every $i<l_1$, $(t_1^1,\dots,t_{n_1}^1) \leq^* (s^{l_1}_1,\dots,s^{l_1}_{m_{l_1}})$, $(t_1^2,\dots,t_{n_2}^2) \not\leq^* (s^i_1,\dots,s^i_{m_i})$ for every $l_1< i < l_2$, $(t_1^2,\dots,t_{n_2}^2) \leq^* (s^{l_2}_1,\dots,s^{l_2}_{m_{l_2}})$, ...,  $(t_1^{k-1},\dots,t_{n_{k-1}}^{k-1}) \leq^* (s^{l_{k-1}}_1,\dots,s^{l_{k-1}}_{m_{l_{k-1}}})$ and $(t_1^k,\dots,t_{n_k}^k) \not\leq^* (s^i_1,\dots,s^i_{m_i})$ for every $i>l_{k-1}$.
\end{enumerate}
\medskip
Now, $(t_1^i,\dots,t_{n_i}^i) \not\leq^* (s^j_1,\dots,s^j_{m_j})$ is valid iff one of the following holds
\begin{enumerate}
\item[1.] $s^j_r \in L(t^i_1)$ for every $r$,
\item[$2$.] there exists an index $r^{i,j}_1$ such that $s^j_r \in L(t^i_1)$ for every $r<r^{i,j}_1$, $t^i_1\leq s^j_{r^{i,j}_1}$ and $s^j_r\in L(t^i_2)$ for every $r>r^{i,j}_1$,
\item[] $\dots$
\item[\text{$n_i$}.] there exist indices $r^{i,j}_1<\dots <r^{i,j}_{n_{i}-1}$ such that $s^j_r \in L(t^i_1)$ for every $r<r^{i,j}_1$, $t^i_1\leq s^j_{r^{i,j}_1}$, $s^j_r\in L(t^i_2)$ for every $r^{i,j}_2> r>r^{i,j}_1$,..., $t^i_{n_{i}-1}\leq s^j_{r^{i,j}_{n_i-1}}$ and $s^j_r \in L(t^i_{n_i})$ for every $r>r^{i,j}_{n_i-1}$.
\end{enumerate}
Define
\begin{align*}
W'(X):={}&{}\bigoplus_{i=1}^k  \left (\left ( \bigoplus_{j=1}^{n_1} L(t^1_1)^* \otimes X \otimes \dots \otimes X \otimes L(t^1_j)^*\right )^*  \otimes X^* \otimes  \dots\right. \\
&\left.  \otimes \ X^* \otimes\left ( \bigoplus_{j=1}^{n_i} L(t^i_1)^* \otimes X \otimes \dots \otimes X \otimes L(t^i_j)^*\right )^* \right ).
\end{align*}

If $s=\circ((s^1_1,\dots,s^1_{m_1}),\dots,(s_1^l,\dots,s_{m_l}^l))$ is an element in $L(t)$, we can interpret $((s^1_1,\dots,$ $s^1_{m_1}),\dots,(s_1^l,\dots,s_{m_l}^l))$ as an element of $W'(L(t))$. Denote this interpretation as $f(\times s) =w(s_1,\dots,s_n)$, with $\{s_1,\dots,s_n\} \subseteq \{s^1_1,\dots,s_{m_l}^l\} \subseteq L(t)$.\\
For example assume that case 2. holds, meaning that there exists an index $l_1$ such that $(t_1^1,\dots,t_{n_1}^1) \not\leq^* (s^i_1,\dots,s^i_{m_i})$  for every $i<l_1$, $(t_1^1,\dots,t_{n_1}^1) \leq^* (s^{l_1}_1,\dots,s^{l_1}_{m_{l_1}})$ and $(t_1^2,\dots,t_{n_2}^2) \not\leq^* (s^i_1,\dots,s^i_{m_i})$ for every $i> l_1$. Then assume that for $(t_1^1,\dots,t_{n_1}^1) $ $\not\leq^* (s^1_1,\dots,s^1_{m_1})$ subcase $1.$ holds, but for $(t_1^1,\dots,t_{n_1}^1) \not\leq^* (s^i_1,\dots,s^i_{m_i})$ with $1<i<l_1$ subcase 2. holds. Additionally, assume that for every $i>l_1$ subcase 2. holds if we look to $(t_1^2,\dots,t_{n_2}^2) \not\leq^* (s^i_1,\dots,s^i_{m_i})$. In this case, $f(\times s) = w(s_1,\dots,s_n)$ is equal to
\begin{align*}
&\left(
\left( \ \ (s^1_1,\dots, s^1_{m_1}),\left(  (s^2_1,\dots,s^2_{r^{1,2}_1 -1}),  s^2_{r^{1,2}_1 }, (s^2_{r^{1,2}_1 +1}, \dots, s^2_{m_2}) \right) , \dots,\right.\right.  \\
 & \ \ \ \ \ \ \left.
 \left(  (s^{l_1-1}_1,\dots,s^{l_1-1}_{r^{1,{l_1-1}}_1 -1}),  s^{l_1-1}_{r^{1,{l_1-1}}_1 }, (s^{l_1-1}_{r^{1,{l_1-1}}_1 +1}, \dots, s^{l_1-1}_{m_{l_1-1}}) \right) \right),\\
& \ \ \ (s^{l_1}_1,\dots,s^{l_1}_{m_{l_1}}),\\
& \ \ \ \left(   \left(  (s^{l_1+1}_1,\dots,s^{l_1+1}_{r^{2,{l_1+1}}_1 -1}),  s^{l_1+1}_{r^{2,{l_1+1}}_1 }, (s^{l_1+1}_{r^{2,{l_1+1}}_1 +1}, \dots, s^{l_1+1}_{m_{l_1+1}})\right), \dots,\right.\\
& \ \ \ \  \ \  \left.\left. \left(  (s^{l}_1,\dots,s^{l}_{r^{2,{l}}_1 -1}),  s^{l}_{r^{2,{l}}_1 }, (s^{l}_{r^{2,{l}}_1 +1}, \dots, s^{l}_{m_{l}})\right)
\right)
\right),
\end{align*}
where $w(\cdot,\dots,\cdot)$ is equal to
\begin{align*}
&\left(
\left( \ \ (s^1_1,\dots, s^1_{m_1}),\left(  (s^2_1,\dots,s^2_{r^{1,2}_1 -1}),  \cdot , (s^2_{r^{1,2}_1 +1}, \dots, s^2_{m_2}) \right) , \dots,\right.\right.  \\
 & \ \ \ \ \ \ \left.
 \left(  (s^{l_1-1}_1,\dots,s^{l_1-1}_{r^{1,{l_1-1}}_1 -1}),  \cdot , (s^{l_1-1}_{r^{1,{l_1-1}}_1 +1}, \dots, s^{l_1-1}_{m_{l_1-1}}) \right) \right),\\
& \ \ \ (\cdot, \dots, \cdot),\\
& \ \ \ \left(   \left(  (s^{l_1+1}_1,\dots,s^{l_1+1}_{r^{2,{l_1+1}}_1 -1}), \cdot , (s^{l_1+1}_{r^{2,{l_1+1}}_1 +1}, \dots, s^{l_1+1}_{m_{l_1+1}})\right), \dots,\right.\\
& \ \ \ \  \ \  \left.\left. \left(  (s^{l}_1,\dots,s^{l}_{r^{2,{l}}_1 -1}),  \cdot,  (s^{l}_{r^{2,{l}}_1 +1}, \dots, s^{l}_{m_{l}})\right)
\right)
\right),
\end{align*}

Using the just-described characterization of the set $L(t)$, one can see that the inequality $f(\times s) \leq_{W'(\T(W))} f(\times s')$ yields $\times s\leq_{W(\T(W))} \times s'$ for every $s,s' \in L(t)\backslash \{\circ\}$. A similar, but more detailed, argument can be found in the proof of Theorem \ref{computation upper bound T(W) with W(X)=B(X)}.

\medskip

Define now a mapping $g$ from $L(t)$ into $\T(W')$ in the following recursive way. Let $g(\circ)$ be $\circ$. Assume now that $s=\circ((s^1_1,\dots,s^1_{m_1}),\dots,(s_1^l,\dots,s_{m_l}^l))\in L(t)$ and that $g(s^i_j)$ is already defined for every $i$ and $j$. If $f(\times s) = w(s_1,\dots,s_n)$, let $g(s)$ be the element $\circ[w(g(s_1),\dots,g(s_n))]$ in $\T(W')$. If $s$ is the example as before, then $g(s)$ is defined as
\begin{align*}
\circ&\left(
\left( \ \ (s^1_1,\dots, s^1_{m_1}),\left(  (s^2_1,\dots,s^2_{r^{1,2}_1 -1}), g( s^2_{r^{1,2}_1 }), (s^2_{r^{1,2}_1 +1}, \dots, s^2_{m_2}) \right) , \dots,\right.\right.  \\
 & \ \ \ \ \ \ \left.
 \left(  (s^{l_1-1}_1,\dots,s^{l_1-1}_{r^{1,{l_1-1}}_1 -1}), g( s^{l_1-1}_{r^{1,{l_1-1}}_1 }), (s^{l_1-1}_{r^{1,{l_1-1}}_1 +1}, \dots, s^{l_1-1}_{m_{l_1-1}}) \right) \right),\\
& \ \ \ \left(g(s^{l_1}_1),\dots,g(s^{l_1}_{m_{l_1}}) \right),\\
& \ \ \ \left(   \left(  (s^{l_1+1}_1,\dots,s^{l_1+1}_{r^{2,{l_1+1}}_1 -1}), g( s^{l_1+1}_{r^{2,{l_1+1}}_1 }), (s^{l_1+1}_{r^{2,{l_1+1}}_1 +1}, \dots, s^{l_1+1}_{m_{l_1+1}})\right), \dots,\right.\\
& \ \ \ \  \ \  \left.\left. \left(  (s^{l}_1,\dots,s^{l}_{r^{2,{l}}_1 -1}),  g(s^{l}_{r^{2,{l}}_1 }), (s^{l}_{r^{2,{l}}_1 +1}, \dots, s^{l}_{m_{l}})\right)
\right)
\right).
\end{align*}

Is $g$ a quasi-embedding? We claim that $g(s) \leq_{\T(W')} g(s')$ implies $s \leq_{\T(W)} s'$ by induction on the sum of the complexities of $s$ and $s'$. If $s'= \circ$ or $s = \circ$, then this is trivial. Let 
\begin{align*}
s&=\circ((s^1_1,\dots,s^1_{m_1}),\dots,(s_1^l,\dots,s_{m_l}^l)),\\
s'&=\circ((s'^1_1,\dots,s'^1_{p_1}),\dots,(s'^{r}_1,\dots,s'^{r}_{p_{r}}))
\end{align*}
and assume that $g(s) \leq g(s')$, $f(\times s) = w(s_1,\dots,s_n)$ and $f(\times s') = w'(s'_1,\dots,s'_{n'})$. Then either $g(s) \leq g({s'}_i)$ for a certain $i$ or $\times g(s) \leq_{W'(\T(W'))}  \times g(s')$. In the former case, the induction hypothesis yields $s \leq {s'}_i \leq s'$. In the latter case, we have $w(g(s_1),\dots,g(s_n)) \leq_{W'(\T(W))} w'(g(s'_1),\dots,g(s'_{n'}))$. The induction hypothesis yields that $g$ is a quasi-embedding from the set $\{s_1,\dots,s_n, s'_1,\dots, s'_{n'}\}$ to $\T(W')$. Hence, the Lifting Lemma implies $f(\times s) = w(s_1,\dots,s_n) \leq_{W'(\T(W))} w'(s'_1,\dots,s'_{n'}) =f(\times s') $. From the properties of $f$, we obtain $\times s \leq_{W(\T(W))} \times s'$, hence $s \leq_{\T(W)} s'$.
\medskip

Now, Lemma \ref{quasi-embedding} and Theorem \ref{Computation upper bound for T(sum(sum X^* X))2} yield that $L(t)$ is a $\wpo$ and
\[o(L(t))\leq o(\T(W')) \leq \vartheta(o(W'(\Omega))).\]
So we can end this proof if we can show that $\vartheta(o(W'(\Omega)))<\vartheta\left(\Omega^{\Omega^{\Omega^{\omega}}}\right)$. For notational convenience, we write $\Y$ instead of $o(\Y)$ and $\Y^*$ instead of $o(\Y^*)$ for $\wpo$'s $\Y$.
From the induction hypothesis, we know that $l(t^i_j)<\vartheta\left(\Omega^{\Omega^{\Omega^{\omega}}}\right) < \Omega$, hence $l(t^i_1)^* \otimes \Omega \otimes \dots \otimes \Omega \otimes l(t^i_j)^*<\Omega^n$ for a certain finite $n$. Therefore, 
\begin{align*}
\left (\bigoplus_{j=1}^{n_i} l(t^i_1)^* \otimes \Omega \otimes \dots \otimes \Omega \otimes l(t^i_j)^*\right) +1 < \Omega^\omega
\end{align*}
and 
\begin{align*}
\left (\bigoplus_{j=1}^{n_i} l(t^i_1)^* \otimes \Omega \otimes \dots \otimes \Omega \otimes l(t^i_j)^*\right)^* < \Omega^{\Omega^{\Omega^{\omega}}}.
\end{align*}
From this it follows that $o(W'(\Omega)) < \Omega^{\Omega^{\Omega^{\omega}}}$. Now
\begin{align*}
o(W'(\Omega)) &= \bigoplus_{i=1}^k  \left (\left ( \bigoplus_{j=1}^{n_1} l(t^1_1)^* \otimes \Omega \otimes \dots \otimes \Omega \otimes l(t^1_j)^*\right )^*  \otimes \Omega^* \otimes \dots\right. \\
&\left.  \otimes \ \Omega^* \otimes \left ( \bigoplus_{j=1}^{n_i} l(t^i_1)^* \otimes \Omega \otimes \dots \otimes \Omega \otimes l(t^i_j)^*\right )^* 
\right )\\
&= \bigoplus_{i=1}^k  \left (\Omega^{\omega (i-1)}\otimes
\left ( \bigoplus_{j=1}^{n_1} \Omega^{j-1} \otimes l(t^1_1)^* \otimes \dots \otimes l(t^1_j)^*\right )^*  \otimes  \dots\right. \\
&\left.  \otimes  \left ( \bigoplus_{j=1}^{n_i} \Omega^{j-1}\otimes l(t^i_1)^*  \otimes \dots \otimes l(t^i_j)^*\right )^* 
\right ).\\
\end{align*}
Using Lemmas \ref{coefficients under multiplicative sum and product} and \ref{coefficients under *}, $k(o(W'(\Omega))) < \vartheta\left(\Omega^{\Omega^{\Omega^{\omega}}}\right)$ is valid if 
\[l(t^i_1)^*  \otimes \dots \otimes l(t^i_j)^* < \vartheta\left(\Omega^{\Omega^{\Omega^{\omega}}}\right).\]
This is true because $l(t^i_j) < \vartheta\left(\Omega^{\Omega^{\Omega^{\omega}}}\right)$ and $\vartheta\left(\Omega^{\Omega^{\Omega^{\omega}}}\right)$ is an epsilon number.
   \end{proof}

These results can be generalized to the following theorem. We do not give the proof as it is very technical and it will not provide new insights because it follows the same procedures as in Theorems \ref{Computation upper bound for T(sum(sum X^* X))2} and \ref{Computation upper bound for T(X**)}.

\begin{theorem}\label{computation upper bound T(W) with W(X)=X***} Suppose that $W\in Map$ consists only of $\cdot$, $+$, $\times$, ${}^*$ and countable $\wpo$'s $\X_i$. Then $\T(W)$ is a $\wpo$ and $o(\T(W)) \leq \vartheta(o(W(\Omega)))$.
\end{theorem}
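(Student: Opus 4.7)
The plan is to mimic the structure of the proofs of Theorems \ref{Computation upper bound for T(sum(sum X^* X))2} and \ref{Computation upper bound for T(X**)}, but replace the ad hoc case-analyses by a recursive one that tracks the syntactic structure of the constructor $W \in Map$. We argue by main induction on $o(W(\Omega))$, with the base case $o(W(\Omega)) < \Omega$ (in which $W$ contains no $\cdot$) handled exactly as in Theorem \ref{Computation upper bound for T(sum(sum X^* X))2}. In the inductive step we prove, by a sub-induction on the complexity of $t \in \T(W)$, that $L(t)$ is a $\wpo$ with $l(t) < \vartheta(o(W(\Omega)))$; then Lemma \ref{dejonghandparikh2} yields the theorem.

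Fix $t = \circ[w(t_1,\dots,t_n)]$ and assume the sub-induction hypothesis for each $t_i$. The core construction is a syntactic ``$W$-directed'' characterization of non-embeddings. More precisely, I would define recursively, for every sub-constructor $V$ of $W$ and every element $v \in V(\T(W))$, a constructor $V'_v \in Map$ and a map $f_{V,v}$ sending $\{u \in V(\T(W)) : v \not\leq_{V(\T(W))} u\}$ into $V'_v(\T(W))$, so that $f_{V,v}(u) \leq_{V'_v(\T(W))} f_{V,v}(u')$ implies $v \not\leq u'$ whenever $v \not\leq u$ and $v \not\leq u'$ (i.e.\ $f_{V,v}$ behaves like a quasi-embedding on the failure-set of $v$). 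The definition of $V'_v$ follows the shape of $V$:
\begin{itemize}
\item for $V = \X$ a fixed countable $\wpo$, set $V'_v = L_\X(v)$;
\item for $V = \cdot$ and $v = t_j$ a subterm, set $V'_v = L_{\T(W)}(t_j)$;
\item for $V = V_1 + V_2$, split on the summand containing $v$ and reuse $V'_{1,v}$ or $V'_{2,v}$;
\item for $V = V_1 \times V_2$, take a finite sum over coordinates witnessing the failure, each summand being $V_1 \times V'_{2,v_2}$ or $V'_{1,v_1} \times V_2$;
\item for $V = U^*$ with $v = (v_1,\dots,v_r)$, use the standard Higman case-analysis (as in Theorem \ref{Computation upper bound for T(X**)}) to write the failure-set as a sum, over the choices of maximal embedding prefixes, of expressions of the form $(U^*)^{q-1} \times U'^{*}_{v_1} \times U^* \times \dots \times U^* \times U'^{*}_{v_r}$.
\end{itemize}
Applying this to $V = W$ and $v = w(t_1,\dots,t_n)$, and combining with the sub-induction hypothesis, produces a $W' := V'_v \in Map$ together with a quasi-embedding $g : L(t) \to \T(W')$ defined recursively via $g(\circ) = \circ$ and $g(\circ[\times s]) = \circ[f_{W,\times t}(\times s)\,\text{with each}\,s' \in L(t)\,\text{replaced by}\,g(s')]$. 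That $g$ is a quasi-embedding is verified exactly as in Theorems \ref{Computation upper bound for T(sum(sum X^* X))2} and \ref{Computation upper bound for T(X**)} by induction on complexity, using the Lifting Lemma at each step.

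It remains to verify the two numerical estimates $o(W'(\Omega)) < o(W(\Omega))$ and $k(o(W'(\Omega))) < \vartheta(o(W(\Omega)))$, which together with the main induction hypothesis give $l(t) \leq o(\T(W')) \leq \vartheta(o(W'(\Omega))) < \vartheta(o(W(\Omega)))$. The first follows inductively on the structure of $V$: at each constructor the passage from $V_v$ to $V'_v$ strictly decreases the order type after replacing $\cdot$ by $\Omega$, because some $\Y$-factor is replaced by $L_\Y(y)$ of strictly smaller order type, some $X$-factor is replaced by $L_{\T(W)}(t_j)$ of countable order type ($<\Omega$), or a $U^*$-factor is replaced by a finite product whose order type is strictly below the multiplicatively-closed $U^*$. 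The second estimate follows from Lemmas \ref{coefficients under multiplicative sum and product} and \ref{coefficients under *} combined with the sub-induction hypothesis $l(t_i) < \vartheta(o(W(\Omega)))$ and the fact that, since $o(W(\Omega)) \geq \Omega$, $\vartheta(o(W(\Omega)))$ is an epsilon number, so it absorbs natural sums, products and Higman's $o((\cdot)^*)$ operation applied to ordinals below it.

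The main obstacle is purely a bookkeeping one: formalizing the recursive definition of $V'_v$ and of $f_{V,v}$ so that (i) at each level only finitely many sub-cases are collected, (ii) the result is an element of $Map$, and (iii) both the quasi-embedding property and the two numerical estimates can be proved by a single structural induction on $V$. Once that recursion is cleanly set up, the rest of the argument is a routine amalgamation of the techniques already displayed in the proofs of Theorems \ref{Computation upper bound for T(sum(sum X^* X))2} and \ref{Computation upper bound for T(X**)}.
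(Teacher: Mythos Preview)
Your sketch is essentially correct and is precisely the approach the paper indicates: the paper does not give a proof of this theorem but states that it ``follows the same procedures as in Theorems \ref{Computation upper bound for T(sum(sum X^* X))2} and \ref{Computation upper bound for T(X**)}'', and your recursive definition of $V'_v$ along the syntax of $V$ is exactly the uniform packaging of those two arguments. Two small slips to fix: the displayed property of $f_{V,v}$ should read ``$f_{V,v}(u)\le f_{V,v}(u')$ implies $u\le u'$'' (as your parenthetical makes clear), and in the Higman clause the summand for case $q$ should be $U^{q-1}\times (U'_{v_1})^*\times\cdots\times (U'_{v_q})^*$ (matched positions contribute a single $U$, not $U^*$, and the product stops at $v_q$), matching the shapes appearing in Theorems \ref{Computation upper bound for T(sum(sum X^* X))2} and \ref{Computation upper bound for T(X**)}.
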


One can also show that $\vartheta(o(W(\Omega)))$ is a lower bound for $o(\T(W))$ if $o(W(\Omega))\geq \Omega^3$. However, the purpose of this section is to proof theorems that are needed in section \ref{sec:an order-theoretic approach of the Howard-Bachmann ordinal number}. Therefore, we skip the proof of the lower bound.


\section{An order-theoretic approach of the Howard-Bach\-mann ordinal}\label{sec:an order-theoretic approach of the Howard-Bachmann ordinal number}
The previous section yields 
\[o(\T(\cdot^{\overbrace{*\dots*}^{n}})) = \vartheta(\Omega_{2n-1}[\omega]).\]
Therefore, the tree-structures $\T(\cdot^{\overbrace{*\dots*}^{n}})$ give rise to representations of countable ordinals strictly below the Howard-Bachman ordinal and the `limit' of these structures will be equal to this famous ordinal.
But what do we mean by the `limit' of these structures? In some sense, the set of binary trees is the limit of an iteration of the $*$-operator. Hence, one can expect that $o(\T(\mathcal{B}(\cdot))) = \sup_{n<\omega} \vartheta(\Omega_{2n-1}[\omega]) = \vartheta(\varepsilon_{\Omega+1})$. In this section, we will prove that this is indeed the case. This result yields that the Howard-Bachmann ordinal can be represented as a tree-structure using binary trees, or more specifically, as the $\wpo$ $(\overline{\mathbb{T}}_2,\leq_{gap})$.

\begin{theorem}\label{computation upper bound T(W) with W(X)=B(X)} $\T(\mathcal{B}(\cdot))$ is a $\wpo$ and $o(\T(\mathcal{B}(\cdot)))\leq \vartheta(\varepsilon_{\Omega+1})$.
\end{theorem}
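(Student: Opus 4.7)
The plan is to prove, by induction on the complexity of $t \in \T(\mathcal{B}(\cdot))$, that $L(t)$ is a $\wpo$ and that $l(t) < \vartheta(\varepsilon_{\Omega+1})$; the theorem then follows via Theorem \ref{dejonghandparikh2}. The base case $t = \circ$ is trivial, and in the inductive step I write $t = \circ[B(t_1,\ldots,t_n)]$ where $B$ is a fixed finite binary tree with leaf-labels $t_1,\ldots,t_n \in \T(\mathcal{B}(\cdot))$. By the induction hypothesis each $L(t_i)$ is a countable $\wpo$ with $l(t_i) < \vartheta(\varepsilon_{\Omega+1})$; consequently each $\mathcal{B}(L(t_i))$ is itself a countable $\wpo$, with order type $\varepsilon_{\overline{l(t_i)}}$, and both $l(t_i)$ and $\varepsilon_{\overline{l(t_i)}}$ stay strictly below $\vartheta(\varepsilon_{\Omega+1})$ (which is an epsilon number).

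The key step is to construct a $W' \in Map$, built only from $\cdot,+,\times,{}^{*}$ and countable $\wpo$'s, together with a quasi-embedding $g: L(t) \to \T(W')$, so that Theorem \ref{computation upper bound T(W) with W(X)=X***} applies. Following the template of Theorems \ref{Computation upper bound for T(sum(sum X^* X))2} and \ref{Computation upper bound for T(X**)}, I characterise the non-$\circ$ elements of $L(t)$: they are terms $s = \circ[B'(s_1,\ldots,s_m)]$ with $s_j \in L(t)$ for every $j$ and $B(t_1,\ldots,t_n) \not\leq_{\mathcal{B}(\T(\mathcal{B}(\cdot)))} B'(s_1,\ldots,s_m)$. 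The negation of $\mathcal{B}$-embedding is then unfolded by a sub-induction on the fixed tree $B$: at a leaf of $B$ labelled $t_i$, non-embedding amounts to all leaves of the corresponding part of $B'$ lying in $L(t_i)$, which can be encoded using $\mathcal{B}(L(t_i))$ as a single countable $\wpo$ constant; at an internal node of $B$ with immediate subtrees $B_L,B_R$, non-embedding forks finitely into the disjunction that $B$ embeds into neither immediate subtree of $B'$, while additionally $B_L$ fails on the left subtree of $B'$ or $B_R$ fails on the right subtree. Organising these finitely many cases via $+,\times,{}^{*}$ yields $W'$, and then, exactly as in the earlier proofs, one defines a map $f$ on the relevant naked terms and builds $g$ recursively on complexity, with the Lifting Lemma applied at the step where $\times g(s) \leq_{W'(\T(W'))} \times g(\bar s)$ is to be translated back to an inequality in $\mathcal{B}(\T(\mathcal{B}(\cdot)))$. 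Theorem \ref{computation upper bound T(W) with W(X)=X***} together with Lemma \ref{quasi-embedding} then give $l(t) \leq o(\T(W')) \leq \vartheta(o(W'(\Omega)))$.

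To conclude I still need $o(W'(\Omega)) < \varepsilon_{\Omega+1}$ and $k(o(W'(\Omega))) < \vartheta(\varepsilon_{\Omega+1})$, since Lemma \ref{main property theta-function} will then give $\vartheta(o(W'(\Omega))) < \vartheta(\varepsilon_{\Omega+1})$. The coefficient bound is routine using Lemmas \ref{coefficients under multiplicative sum and product} and \ref{coefficients under *} together with the induction hypotheses $l(t_i), \varepsilon_{\overline{l(t_i)}} < \vartheta(\varepsilon_{\Omega+1})$. The first bound is the main obstacle: because each internal node of $B$ introduces one or two nested ${}^{*}$-operations into $W'$, one must verify that a tree $B$ of some finite depth $d$ produces an ordinal below $\Omega_{N}[\omega]$ for some finite $N = N(d)$, which is automatically strictly below $\varepsilon_{\Omega+1} = \sup_{j} \Omega_j[1]$. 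The whole point of the argument is that the recursive binary-tree structure of $B$, being finite, introduces only finitely many nested ${}^{*}$-operations into $W'$, so the resulting ordinal lies on one of the finite rungs of the ladder that converges to $\varepsilon_{\Omega+1}$, and then taking the supremum over all $t$ gives exactly $o(\T(\mathcal{B}(\cdot))) \leq \vartheta(\varepsilon_{\Omega+1})$.
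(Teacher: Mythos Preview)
Your proposal is correct and follows essentially the same route as the paper's proof: induction on the complexity of $t$, a recursive analysis of $B \not\leq_{\mathcal{B}} B'$ along the fixed tree $B$ that produces a functor $W_B \in Map$ built from $\cdot,+,\times,{}^{*}$ and the countable constants $\mathcal{B}(L(t_i))$, a quasi-embedding $L(t) \to \T(W_B)$ via the Lifting Lemma, an appeal to Theorem~\ref{computation upper bound T(W) with W(X)=X***}, and finally the bounds $o(W_B(\Omega)) < \Omega_N[1]$ and $k(o(W_B(\Omega))) < \vartheta(\Omega_N[1])$ by induction on the height of $B$.

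The only place where the paper is more explicit than your outline is in naming the recursion: it sets $W_B(X) = \mathcal{B}(L(t_i))$ at a leaf and $W_B(X) = (W_{B_1}(X) + W_{B_2}(X))^{*} \times X$ at an internal node, and then builds the quasi-embedding $g_B$ concretely by walking down the spine of $B'$, recording at each step whether case (a) or (b) of your disjunction fires and collecting the resulting sequence in the ${}^{*}$-component. Your phrase ``organising these finitely many cases via $+,\times,{}^{*}$'' is exactly this, but writing down the explicit form of $W_B$ and the spine-walk is where the work lies, so it is worth spelling out.
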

\begin{proof}
We prove that $L(t)$ is a $\wpo$ and $l(t) < \vartheta(\varepsilon_{\Omega+1})$ for every $t$ in $\T(\mathcal{B}(\cdot))$ by induction on the complexity of $t$. The theorem then follows from Theorem \ref{dejonghandparikh2}. If $t=\circ$, then $L(t)$ is the empty $\wpo$ and $l(t)=0<\vartheta(\varepsilon_{\Omega+1})$. 

\medskip

Let $B(t_1,\dots,t_n)$ be an element of $\mathcal{B}(\T(\mathcal{B}(\cdot)))$. If we write $B(t_1,\dots,t_n)$, we mean that the leaf-labels are elements of $\{t_1,\dots,t_n\}$. If it is clear from the context, we sometimes write $B$ instead of $B(t_1,\dots,t_n)$. If $B(t_1,\dots,t_n)$ is a tree of height zero with leaf-label $t_i$, define $W_B(X)$ as the partial ordering $\mathcal{B}(L_{\T(\mathcal{B}(\cdot))}(t_i))$. Remark that $\cdot$ does not occur in $W_B$. If $B(t_1,\dots,t_n)$ is a tree with immediate subtrees $B_1$ and $B_2$, define $W_{B}(X) = W_{B(t_1,\dots,t_n)}(X)$ as \[(W_{B_1}(X) + W_{B_2}(X))^* \times X.\]
We prove by induction on the height of the tree $B$ that there exists a mapping $g_B$ from 
\[L_B :=  \{D(d_1,\dots,d_k) \in \mathcal{B}(\T(\mathcal{B}(\cdot))): B(t_1,\dots,t_n) \not\leq_{\mathcal{B}(\T(\mathcal{B}(\cdot)))} D(d_1,\dots,d_k)\}\]
to the partial ordering $W_B(\T(\mathcal{B}(\cdot)))$ such that $g_B$ is a quasi-embedding and if $g_B(D) = w(d'_1,\dots,d'_m)$, with $w$ an element in $W_{B}$ and $d'_1,\dots,d'_m \in \T(\mathcal{B}(\cdot))$, then $\{d'_1,\dots,d'_m\} \subseteq \{d_1,\dots,d_k\}$.

\medskip 

\textit{i) $height(B) =0$.}\\
Let $B(t_1,\dots,t_n)$ be a tree with one node and leaf-label $t_i$. Then $D(d_1,\dots,d_k)\in L_{B}$ iff  $d_j \in L(t_i)$ for every $j$. Define then the element $g_{B}(D(d_1,\dots,d_k))$ as $D(d_1,\dots,d_k)\in \mathcal{B}(L_{\T(\mathcal{B}(\cdot))}(t_i))=W_{B}(\T(\mathcal{B}(\cdot)))$. If we write $g_B(D)$ as $w(d'_1,\dots,d'_m)$, then $m=0$, hence one can show easily that the desired properties of $g_{B}$ are valid.

\medskip

\textit{ii) $height(B) >0$.}\\
Let $B(t_1,\dots,t_n)$ be a binary tree with immediate subtrees $B_1$ and $B_2$. By the induction hypothesis, there exist functions $g_{B_1}$ and $g_{B_2}$ with the wanted properties.  
Now, Pick an arbitrary $D(d_1,\dots,d_k) \in \mathcal{B}(\T(\mathcal{B}(\cdot)))$. Then $B(t_1,\dots,t_n) \not\leq_{\mathcal{B}(\T(\mathcal{B}(\cdot)))} D(d_1,\dots,d_k)$ is valid iff one of the following holds
\begin{enumerate}
\item $D(d_1,\dots,d_k)$ is a binary tree of height $0$ with label $d_i$,
\item $D(d_1,\dots,d_k)$ is a tree of height strictly larger than $0$ with immediate subtrees $D_1$ and $D_2$, $B(t_1,\dots,t_n) \not\leq_{\mathcal{B}(\T(\mathcal{B}(\cdot)))} D_i$ for $i=1,2$ and one of the following occurs
\begin{enumerate}
\item $B_1 \not\leq_{\mathcal{B}(\T(\mathcal{B}(\cdot)))} D_1$,
\item $B_1 \leq_{\mathcal{B}(\T(\mathcal{B}(\cdot)))} D_1$ and $B_2 \not\leq_{\mathcal{B}(\T(\mathcal{B}(\cdot)))} D_2$.
\end{enumerate}
\end{enumerate}

Because $B(t_1,\dots,t_n) \not\leq_{\mathcal{B}(\T(\mathcal{B}(\cdot)))} D_i$ for $i=1,2$, we can also use the above case-study for the trees $D_1$ and $D_2$. This leads us to the following definition. 
Choose an arbitrary $D(d_1,\dots,d_k) \in L_B$. Define $E_0$ and $F_0$ as $D(d_1,\dots,d_k)$. Assume that we have $E_i$ and $F_i$ for a certain $i$ as elements of $\mathcal{B}(\T(\mathcal{B}(\cdot)))$. If $F_i$ is a tree of height strictly larger than $0$, we want to define $E_{i+1}$ and $F_{i+1}$. Suppose that $F_i^1$ and $F_i^2$ are the immediate subtrees of $F_i$. Then define $E_{i+1}$ as
\begin{align}
\label{definition E}
\left\{\begin{array}{ll}
F_i^1 & \text{in case that $2.(a)$ holds if we look to the condition $B \not\leq_{\mathcal{B}(\T(\mathcal{B}(\cdot)))}F_i$}\\
F_i^2&\text{in case that $2.(b)$ holds if we look to the condition $B\not\leq_{\mathcal{B}(\T(\mathcal{B}(\cdot)))}F_i$.}
\end{array}
\right.
\end{align}
Let $x_{i+1}$ be the number $j$ such that $E_{i+1}  = F_{i}^j$ and let $F_{i+1}$ be $F_i^{3-x_{i+1}}$, the \textit{other} immediate subtree of $F_i$. 
From this definition, we obtain a finite sequence $E_0,E_1,\dots,E_p,F_p$ with $E_0=D$ and $F_p$ a tree of height $0$. Therefore, $F_p$ consists of only one node with a label, let us say, $s$. Note that $s$ is also a leaf-label of the tree $D$.
Define now $g_{B}(D)$ as follows using the fact that we have $g_{B_1}$ and $g_{B_2}$:
\[g_{B}(D) := ((g_{B_{x_1}}(E_1),\dots,g_{B_{x_p}}(E_p)), s ).\]
Note that $B_{x_i} \not\leq E_i$, which means that $E_i \in L_{B_{x_i}}$. So $g_{B_{x_i}}(E_i)$ is well-defined, hence $g_B(D) \in W_{B}(\T(\mathcal{B}(\cdot)))$. Does $g$ satisfies the desired properties?

\medskip

First, we already noted that $s$ is a leaf-label of $D(d_1,\dots,d_k)$. Secondly, if $g_{B_{x_i}}(E_i)$ is equal to $w_i(s^i_1,\dots,s^i_{n_i})$, then by the induction hypothesis and the fact that $E_i$ is a subtree of $D$, we obtain $\{s^i_1,\dots,s^i_{n_i}\} \subseteq \{d_1,\dots,d_k\}$. Hence, if $g_B(D) = w(d'_1,\dots,d'_m) \in W_{B}(\T(\mathcal{B}(\cdot)))$, then $\{d'_1,\dots,d'_m\} \subseteq \{d_1,\dots,d_k\}$. Now we want to prove that $g$ is a quasi-embedding.
Let $\overline{E_0},\overline{E_1},\dots,\overline{E_{q}},\overline{F_{q}}$ and $y_1,\dots,y_q$ be the finite sequences forthcoming from definition (\ref{definition E}), the definitions of $x_{i+1}$ and $F_{i+1}$, but now starting with $ \overline{D}(\overline{d_1},\dots, \overline{d_l}) \in L_B$. Denote the label of the tree $\overline{F_{q}}$ of height zero as $\overline{s}$. Assume furthermore that 
\begin{align}
\nonumber
&{}\ g_B(D) = ((g_{B_{x_1}}(E_1),\dots,g_{B_{x_p}}(E_p)), s ) \\
\label{seconddesiredpropertyg_B} \leq_{W_B(\T(\mathcal{B}(\cdot)))}   &{}\ 
g_B(\overline{D})  = ((g_{B_{y_{1}}}(\overline{E_{1}}),\dots,g_{B_{y_q}}(\overline{E_q})), \overline{s} ).
\end{align}
We show that inequality $D(d_1,\dots,d_k) \leq_{\mathcal{B}(\T(\mathcal{B}(\cdot)))} \overline{D}(\overline{d_1},\dots, \overline{d_l})$ holds by induction on $q$.

\medskip

From (\ref{seconddesiredpropertyg_B}), we obtain $s \leq_{\T(\mathcal{B}(\cdot))} \overline{s}$. 
Furthermore, there exist indices $1\leq i_1< \dots < i_p \leq q$ such that 
$g_{B_{x_j}}(E_j) \leq  g_{B_{y_{i_j}}}(\overline{E_{i_j}})$.
Because the left hand side of this inequality is in $W_{B_{x_j}}(\T(\mathcal{B}(\cdot)))$ and the right hand side in $W_{B_{y_{i_j}}}(\T(\mathcal{B}(\cdot)))$, we obtain $x_j  = y_{i_j}$ for every $j$. Furthermore, $E_j \leq_{\mathcal{B}(\T(\mathcal{B}(\cdot)))} \overline{E_{i_j}}$ for every $j$, because $g_{B_{x_j}}$ is a quasi-embedding.\\
If $q=0$, then also $p=0$. Therefore, $D$ is a tree of height zero with leaf-label $s$ and $\overline{D}$ is a tree of the same height with leaf-label $\overline{s}$. Hence $D \leq_{\mathcal{B}(\T(\mathcal{B}(\cdot)))}  \overline{D}$. Now let $q>0$. By construction, 
\begin{align*}
g_{B}(F_1) &= ((g_{B_{x_2}}(E_2),\dots,g_{B_{x_p}}(E_p)), s ),\\
g_B(\overline{F_{i_1}}) &= ((g_{B_{y_{i_1+1}}}(\overline{E_{i_1+1}}),\dots,g_{B_{y_q}}(\overline{E_q})), \overline{s} ).
\end{align*} 
Because $g_B(D) \leq_{W_B(\T(\mathcal{B}(\cdot)))} g_B(\overline{D})$ and $g_{B_{x_1}}(E_1)$ is mapped onto $g_{B_{y_{i_1}}}(\overline{E_{i_1}})$, we obtain $g_B(F_1) \leq_{W_B(\T(\mathcal{B}(\cdot)))} g_B(\overline{F_{i_1}})$. Hence, by the sub-induction hypothesis on $q$, we have the inequality $F_1 \leq_{\mathcal{B}(\T(\mathcal{B}(\cdot)))} \overline{F_{i_1}}$. We also know that $E_1 \leq_{\mathcal{B}(\T(\mathcal{B}(\cdot)))} \overline{E_{i_1}}$ and $x_1=y_{i_1}$. If $x_1=1$, then $E_1$ is the left-immediate subtree of $F_0=D$ and $\overline{E_{i_1}}$ is the left-immediate subtree of $\overline{F_{i_1-1}}$. Furthermore, $F_1$ is the right-immediate subtree of $F_0=D$ and $\overline{F_{i_1}}$ is the right-immediate subtree of $\overline{F_{i_1-1}}$. We conclude that $D = F_0 \leq_{\mathcal{B}(\T(\mathcal{B}(\cdot)))} \overline{F_{i_1-1}} \leq_{\mathcal{B}(\T(\mathcal{B}(\cdot)))} \overline{F_0} = \overline{D}$. The same argument holds for $x_1=2$. Therefore, $g_B$ is a quasi-embedding.

\medskip

Assume $t=\circ[B(t_1,\dots,t_n)] \in \T(\mathcal{B}(\cdot))$ with $B(t_1,\dots,t_n)$ a binary tree in the partial ordering $\mathcal{B}(\T(\mathcal{B}(\cdot)))$ and assume that $L(t_i)$ are $\wpo$'s and $l(t_i)<\vartheta(\varepsilon_{\Omega+1})$. We want to prove that $L(t)$ is a $\wpo$ and $l(t)<\vartheta(\varepsilon_{\Omega+1})$.\\
First of all, we define a quasi-embedding $f$ from $L(t)$ into $\T(W_{B})$. First note that $d=\circ[D(d_1,\dots,d_k)] \in L(t)$ iff $d_i \in L(t)$ and $D(d_1,\dots,d_k) \in L_B$. Define $f(\circ)$ as $\circ$. Suppose $d=\circ[D(d_1,\dots,d_k)] \in L(t)$ and that $f(d_1),\dots,f(d_k)$ are already defined. If $g_{B}(D)=w(d'_1,\dots,d'_m)\in W_{B}(\T(\mathcal{B}(\cdot)))$ with $\{d'_1,\dots,d'_m\} \subseteq \{d_1,\dots,d_k\}$, define $f(d)$ as $\circ[w(f(d'_1),\dots,f(d'_m))]\in \T(W_{B})$. Now we want to prove that $f$ is a quasi-embedding.\\
Assume $f(d) \leq_{\T(W_B)} f(\overline{d})$. We prove that this implies $d\leq_{\T(\mathcal{B}(\cdot))} \overline{d}$ by induction on the sum of the complexities of $d$ and $\overline{d}$. If $d$ or $\overline{d}$ is equal to $\circ$, then this is trivial. Assume $g_B(D(d_1,\dots,d_k)) = w(d'_1,\dots,d'_m)$, $g_B(\overline{D}(\overline{d_1},\dots,\overline{d_l})) = \overline{w}(\overline{d'_1},\dots,\overline{d'_p})$ with $\{d'_1, \dots, d'_m \} \subseteq \{d_1,\dots,d_k\}$ and $\{ \overline{d'_1},\dots,\overline{d'_p} \} \subseteq \{ \overline{d_1},\dots,\overline{d_l} \}$ and 
\begin{align*}
&f(d) = f(\circ[D(d_1,\dots,d_k)]) = \circ[w(f(d'_1),\dots,f(d'_m))] \\
\leq_{\T(W_{B})} &f(\overline{d})= f(\circ[\overline{D}(\overline{d_1},\dots,\overline{d_l})]) = \circ[\overline{w}(f(\overline{d'_1}),\dots,f(\overline{d'_{p}}))].
\end{align*}
This implies either $f(d) \leq_{\T(W_{B})} f(\overline{d'_i})$ for some $i$ or 
\[w(f(d'_1),\dots,f(d'_m)) \leq_{W_B(\T(W_{B}))} \overline{w}(f(\overline{d'_1}),\dots,f(\overline{d'_{p}})).\]
In the former case, the induction hypothesis yields $d\leq_{\T(\mathcal{B}(\cdot))} \overline{d'_i} \leq_{\T(\mathcal{B}(\cdot))} \overline{d}$. 
In the latter case, we observe that the induction hypothesis implies that $f$ is a quasi-embedding from $\{d_1,\dots,d_k, \overline{d_1}, \dots, \overline{d_{l}}\}$ to $\T(W_B)$. Therefore, the Lifting Lemma brings the inequality $g_{B}(D) \leq_{W_B(\T(W_B))} g_B(\overline{D})$. Hence we have $D(d_1,\dots,d_k) \leq_{\mathcal{B}(\T(\mathcal{B}(\cdot)))} \overline{D}(\overline{d_1},\dots,\overline{d_{l}})$ and $d \leq_{\T(\mathcal{B}(\cdot))} \overline{d}$. We conclude that $f$ is a quasi-embedding.

\medskip

By Lemma \ref{quasi-embedding} and Theorem \ref{computation upper bound T(W) with W(X)=X***} we obtain that $L(t)$ is a $\wpo$ and 
\begin{align*}
o(L(t)) \leq o(\T(W_{B})) \leq \vartheta(W_{B}(\Omega)).
\end{align*}
So if $\vartheta(W_{B}(\Omega)) < \vartheta(\varepsilon_{\Omega+1})$, we can end this proof. We prove simultaneously by induction on the height of the tree $B$ that 
\begin{align*}
W_{B}(\Omega)&< \Omega_{n}[1] ,\\
k(W_{B}(\Omega))&< \vartheta( \Omega_n[1]),
\end{align*}
for a certain natural number $n$. Remark that we write $W_B(\Omega)$ instead of $o(W_B(\Omega))$ for notational convenience. From this it follows that $\vartheta(W_{B}(\Omega)) < \vartheta(\varepsilon_{\Omega+1})$.

\medskip

If the height of the tree $B$ is zero, we defined $W_{B}(X)$ as $\mathcal{B}(L_{\T(\mathcal{B}(\cdot))}(t_i))$. Because we assumed that $l(t_i)  < \vartheta(\varepsilon_{\Omega+1})$, we know that $l(t_i)< \vartheta(\Omega_m[1])< \Omega$ for a certain natural number $m \geq 2$.
Hence
\begin{align*}
k(W_{B}(\Omega))& =  k(o(\mathcal{B}(L(t_i))) ) = o(\mathcal{B}(L(t_i)))   \leq  \varepsilon_{l(t_i)  +1} \\
& < \varepsilon_{\vartheta(\Omega_m[1])+1} = \vartheta(\Omega+ \vartheta(\Omega_m[1])) \leq  \vartheta(\Omega_{m+1}[1])
\end{align*}
and
\[W_{B}(\Omega)   \leq  \varepsilon_{l(t_i)  +1} < \Omega < \Omega_{m+1}[1].\]

Assume that the height of $B$ is strictly larger than zero such that $B_1$ and $B_2$ are immediate subtrees of $B$. Because of the induction hypothesis, we know that there exists a natural number $m$ such that 
\begin{align*}
W_{B_1}(\Omega)&< \Omega_m[1] ,\\
k(W_{B_1}(\Omega))&< \vartheta( \Omega_m[1] ),\\
W_{B_2}(\Omega)&< \Omega_m[1] ,\\
k(W_{B_2}(\Omega))&< \vartheta( \Omega_m[1] ).
\end{align*}
We defined $W_B(X)$ as $(W_{B_1}(X) + W_{B_2}(X))^* \times X$, hence
\begin{align*}
W_{B}(\Omega)&< \omega^{\omega^{\Omega_m[1] \oplus \Omega_m[1] \oplus 1}} \otimes \Omega < \Omega_{n_1}[1]
\end{align*}
for a certain $n_1$ large enough. Furthermore, by Lemma \ref{coefficients under multiplicative sum and product}
\[k(W_B(\Omega)) = k\left(\omega^{\omega^{W_{B_1}(\Omega) \oplus W_{B_2}(\Omega) (\pm 1)}}\otimes \Omega\right)<\vartheta( \Omega_{n_2}[1])\]
for some $n_2$ large enough. Taking $n= \max\{n_1,n_2\}$, we can end this proof.
   \end{proof}

Before we prove that the Howard-Bachmann ordinal is a lower bound of the maximal order type of $\T(\mathcal{B}(\cdot))$, we give a lemma.

\begin{lemma}\label{alpha strikt kleiner dan theta(epsilon_Omega+1)} Suppose $\alpha< \vartheta(\varepsilon_{\Omega+1})$ and $\alpha \in P$, the set of additively closed ordinals. Then $\alpha$ can be uniquely  written as
\[\alpha =\vartheta(\beta),\]
with $\beta= 0$ or 
\[\beta=_{NF}\Omega^{\beta_1}\gamma_1+\dots+ \Omega^{\beta_n}\gamma_n< \varepsilon_{\Omega+1},\]
such that $\beta > \beta_1 > \dots > \beta_n$ and $0<\gamma_i < \Omega$ and $k(\beta)< \alpha$.
\end{lemma}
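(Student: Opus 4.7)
The plan is to prove uniqueness first (which is immediate from Lemma \ref{main property theta-function}) and then to obtain existence by a minimization argument tailored so that the side condition $k(\beta) < \alpha$ is built in from the start.

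\textbf{Uniqueness.} Suppose $\vartheta(\beta_1) = \vartheta(\beta_2) = \alpha$ with $k(\beta_i) < \alpha$ for $i=1,2$ and $\beta_1 \neq \beta_2$; without loss of generality $\beta_1 < \beta_2$. Then $k(\beta_1) < \alpha = \vartheta(\beta_2)$, so the first clause of Lemma \ref{main property theta-function} yields $\vartheta(\beta_1) < \vartheta(\beta_2)$, a contradiction. Uniqueness of the base-$\Omega$ Cantor normal form decomposition of $\beta$ is then standard.

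\textbf{Existence.} If $\alpha = 1$, take $\beta = 0$, noting $\vartheta(0) = 1$ (the least element of $P$ above $k(0) = 0$). Otherwise $\alpha > 1$ and, since $\alpha \in P$, actually $\alpha \geq \omega$. Because $\alpha < \vartheta(\varepsilon_{\Omega+1}) = \sup_n \vartheta(\Omega_n[1])$, pick $n$ with $\alpha \leq \vartheta(\Omega_n[1])$; a short induction using $\Omega_{n+1}[1] = \Omega^{\Omega_n[1]} \cdot 1$ shows $k(\Omega_n[1]) = 1 < \alpha$. Hence the set
\[
S := \{\gamma < \varepsilon_{\Omega+1} : \vartheta(\gamma) \geq \alpha \text{ and } k(\gamma) < \alpha\}
\]
is non-empty; put $\beta := \min S$. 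I claim $\vartheta(\beta) = \alpha$. The inequality $\vartheta(\beta) \geq \alpha$ is immediate from $\beta \in S$. For the converse, any $\delta < \beta$ with $k(\delta) < \alpha$ must satisfy $\vartheta(\delta) < \alpha$, for otherwise $\delta \in S$ would contradict the minimality of $\beta$. Combined with $k(\beta) < \alpha$ and $\alpha \in P$, this shows that $\zeta = \alpha$ satisfies both defining clauses of $\vartheta(\beta)$, so $\vartheta(\beta) \leq \alpha$. Writing $\beta < \varepsilon_{\Omega+1}$ in base-$\Omega$ Cantor normal form then produces the required representation $\beta =_{NF} \Omega^{\beta_1}\gamma_1 + \dots + \Omega^{\beta_n}\gamma_n$ with $\beta > \beta_1 > \dots > \beta_n$ and $0 < \gamma_i < \Omega$.

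The subtle step is really the definition of $S$: if one naively took $\beta$ to be the least $\gamma < \varepsilon_{\Omega+1}$ with $\vartheta(\gamma) \geq \alpha$, then minimality would only yield $\vartheta(\delta) < \alpha$ for $\delta < \beta$ with $k(\delta) < \vartheta(\beta)$, and nothing would prevent $k(\beta) \geq \alpha$, ruining the side condition. Building $k(\gamma) < \alpha$ directly into $S$ and verifying non-emptiness through the ordinals $\Omega_n[1]$ (whose coefficient-max equals $1$) is precisely what sidesteps this issue and makes the argument go through in a single stroke.
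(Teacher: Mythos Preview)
Your argument is correct. The paper itself does not give a proof of this lemma at all: it simply writes ``This follows from standard properties of the $\vartheta$-function. A proof of this fact can be found in an unpublished article of Buchholz.'' So your write-up actually supplies what the paper omits.

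The uniqueness part via Lemma~\ref{main property theta-function} is exactly the standard one-line argument. For existence, your key idea---defining $S = \{\gamma < \varepsilon_{\Omega+1} : \vartheta(\gamma) \geq \alpha \text{ and } k(\gamma) < \alpha\}$ with the coefficient bound built in, and witnessing non-emptiness via the ordinals $\Omega_n[1]$ (for which $k(\Omega_n[1]) = 1$)---is clean and makes the verification that $\zeta = \alpha$ satisfies the defining minimum for $\vartheta(\beta)$ go through directly. Your closing remark explaining why the naive choice of $S$ would fail is a nice touch. One cosmetic point: from $\alpha < \sup_n \vartheta(\Omega_n[1])$ you in fact get a strict inequality $\alpha < \vartheta(\Omega_n[1])$ for some $n$, though your ``$\leq$'' is of course sufficient.
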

\begin{proof}
This follows from standard properties of the $\vartheta$-function. A proof of this fact can be found in an unpublished article of Buchholz.
\end{proof}

If you look closer at the proof of the next theorem, one can see how every ordinal number below $\vartheta(\varepsilon_{\Omega+1})$ can be represented as an element of $\T(\mathcal{B}(\cdot))$. 
Note that this proof can be carried out in $ACA_0$ if we have a predefined primitive recursive ordinal notation system for $\vartheta(\varepsilon_{\Omega+1})$.

\begin{theorem} \label{computation lower bound W(X)=B(X)} $o(\T(\mathcal{B}(\cdot)))\geq \vartheta(\varepsilon_{\Omega+1})$.
\end{theorem}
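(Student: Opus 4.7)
The plan is to exhibit a quasi-embedding $\iota : (\vartheta(\varepsilon_{\Omega+1}), <) \to (\T(\mathcal{B}(\cdot)), \leq_{\T(\mathcal{B}(\cdot))})$; since the ordinal $\vartheta(\varepsilon_{\Omega+1})$, viewed as a well-order, has maximal order type equal to itself, Lemma \ref{quasi-embedding} will then give $\vartheta(\varepsilon_{\Omega+1}) \leq o(\T(\mathcal{B}(\cdot)))$, which is exactly the inequality we want.

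The map $\iota$ is defined by transfinite recursion using Lemma \ref{alpha strikt kleiner dan theta(epsilon_Omega+1)}. Set $\iota(0) := \circ$. For $\alpha \in P$ with $0 < \alpha < \vartheta(\varepsilon_{\Omega+1})$, write $\alpha = \vartheta(\beta)$ with $\beta =_{NF} \Omega^{\beta_1}\gamma_1 + \cdots + \Omega^{\beta_n}\gamma_n < \varepsilon_{\Omega+1}$ and $k(\beta) < \alpha$. Since the exponents $\beta_j < \varepsilon_{\Omega+1}$ are strictly smaller than $\beta$, the $\Omega$-CNF decomposition iterates to a hereditary normal form whose atomic constituents are countable coefficients $\gamma < \Omega$, each bounded by $k(\beta) < \alpha$ and hence already in the domain of $\iota$. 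I then encode $\beta$ as a single binary tree $B_\beta \in \mathcal{B}(\T(\mathcal{B}(\cdot)))$ whose internal shape faithfully records the ``above-$\Omega$'' skeleton of the hereditary normal form (the nested $\Omega^{(-)}$ structure and the ordered list of summands) and whose leaves carry the recursive values $\iota(\gamma')$ for the countable parameters appearing in that normal form. Set $\iota(\alpha) := \circ[B_\beta]$. For a general $\alpha = \alpha_1 + \cdots + \alpha_m$ in Cantor normal form, with each $\alpha_i \in P$, concatenate the individual encodings $\iota(\alpha_i)$ into one compound binary tree by a right-comb-style construction, yielding again a single element $\iota(\alpha) = \circ[B] \in \T(\mathcal{B}(\cdot))$.

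The verification that $\iota$ is a quasi-embedding, i.e.\ $\iota(\alpha) \leq_{\T(\mathcal{B}(\cdot))} \iota(\alpha') \Rightarrow \alpha \leq \alpha'$, proceeds by simultaneous induction on the sum of the complexities of $\iota(\alpha)$ and $\iota(\alpha')$. The base case $\alpha' = 0$ is immediate since then $\iota(\alpha') = \circ$ forces $\iota(\alpha) = \circ$. Otherwise $\iota(\alpha) = \circ[B]$ and $\iota(\alpha') = \circ[B']$, and by the defining clauses of $\leq_{\T(\mathcal{B}(\cdot))}$ either $\iota(\alpha)$ embeds into a proper subterm at some leaf of $B'$ (handled directly by the induction hypothesis), or there is a structural embedding $B \leq_{\mathcal{B}(\T(\mathcal{B}(\cdot)))} B'$. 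In the latter case the faithfulness of the encoding $\beta \mapsto B_\beta$, combined with the induction hypothesis applied to the leaf labels (using the Lifting Lemma, Lemma \ref{Lifting}, to transport the comparison through the $\mathcal{B}$-layer), yields a controlled inequality between the $\Omega$-CNFs of $\beta$ and $\beta'$; Lemma \ref{main property theta-function}, together with the bound $k(\beta) < \vartheta(\beta)$, then converts this into $\vartheta(\beta) \leq \vartheta(\beta')$, i.e.\ $\alpha \leq \alpha'$.

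The hard part will be engineering the encoding $\beta \mapsto B_\beta$ so that it is faithful. Binary-tree embedding is a rather coarse relation, so one must use every feature available -- the left/right distinction of internal nodes to separate $\Omega$-exponents from coefficients, and the choice of leaf labels as ``delimiters'' between distinct additive summands -- in order to rule out phantom embeddings between structurally unrelated shapes. This is the analogue, for the constructor $\mathcal{B}$, of the right-comb encodings used in the upper-bound proofs of Theorems \ref{Computation upper bound for T(sum(sum X^* X))2}--\ref{Computation upper bound for T(X**)}, but here the bookkeeping is more delicate because the $\Omega$-CNF can nest arbitrarily deeply while the tree must remain a single binary object. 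Once this faithfulness is in place, the inductive step goes through by the standard combination of the Lifting Lemma with the theta-function dichotomy of Lemma \ref{main property theta-function}.
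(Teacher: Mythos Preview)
Your outline follows exactly the strategy the paper uses: build a quasi-embedding $g:\vartheta(\varepsilon_{\Omega+1})\to\T(\mathcal{B}(\cdot))$ by recursion on the normal form of Lemma~\ref{alpha strikt kleiner dan theta(epsilon_Omega+1)}, sending $\vartheta(\beta)$ to $\circ[f(\beta)]$ for a suitable binary tree $f(\beta)$, and handling non-additively-closed $\alpha$ by a right-comb on its Cantor normal form. So the approach is correct and the same as the paper's.

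What is missing, and what you yourself flag as ``the hard part'', is the actual definition of $B_\beta$ and the verification of its faithfulness; without these the proposal is a plan, not a proof. The paper's choice is concrete: $f(0)$ is a single leaf labelled~$\circ$, and for $\beta=_{NF}\Omega^{\beta_1}\gamma_1+\dots+\Omega^{\beta_n}\gamma_n$ the tree $f(\beta)$ is a right-comb of length~$n$ whose $i$-th left branch is itself a two-node fork carrying $f(\beta_i)$ on the left and $g(\gamma_i)$ on the right, with the final rightmost leaf labelled~$\circ$. This specific layout is what makes the case analysis go through: the extra fork separating $f(\beta_i)$ from $g(\gamma_i)$ is exactly what prevents a phantom embedding from confusing an exponent with a coefficient. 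The paper then checks $g(\alpha)\leq g(\alpha')\Rightarrow\alpha\leq\alpha'$ by a direct four-case split (both sides a CNF-sum, mixed, or both $\vartheta$-terms), and inside the last case proves separately that $f(\delta)\leq f(\delta')\Rightarrow\delta\leq\delta'$ by induction on~$\delta'$, tracking where the root of $f(\delta)$ lands in $f(\delta')$. Your suggestion to invoke the Lifting Lemma here does not quite fit: $f$ is not a map $W(Y)\to W(Z)$ lifted from a leaf-level quasi-embedding, since the shape of $f(\beta)$ depends on~$\beta$ itself, so the faithfulness really has to be argued by hand as the paper does.
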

\begin{proof}
Define
\begin{align*}
g: \vartheta(\varepsilon_{\Omega+1}) \to \T(\mathcal{B}(\cdot))
\end{align*}
in the following recursive way. Let $g(0)$ be $\circ$. Pick an arbitrary $\alpha < \vartheta(\varepsilon_{\Omega+1})$ and assume that $g(\beta)$ is already defined for every $\beta < \alpha$. If $\alpha=_{CNF}\omega^{\alpha_1}+\dots+\omega^{\alpha_n}$ with $n\geq 2$, define $g(\alpha)$ as $\circ[B]$ with $B$ the following binary tree: 

\begin{picture}(1,130)
\put(60,-4){\circle*{5}}
\put(40,20){\circle*{5}}
\put(80,20){\circle*{5}}
\put(60,44){\circle*{5}}
\put(100,44){\circle*{5}}
\put(80,68){\circle*{5}}
\put(134,85){\circle*{5}}
\put(114,109){\circle*{5}}
\put(154,109){\circle*{5}}

\put(120,55){\begin{rotate}{80} $\ddots$ \end{rotate}}

\put(60,-4){\line(4,5){20}}
\put(60,-4){\line(-4,5){20}}
\put(80,20){\line(4,5){20}}
\put(80,20){\line(-4,5){20}}
\put(100,44){\line(-4,5){20}}
\put(134,85){\line(4,5){20}}
\put(134,85){\line(-4,5){20}}

\put(10,20){$g(\alpha_1)$}
\put(30,44){$g(\alpha_2)$}
\put(50,68){$g(\alpha_3)$}
\put(80,109){$g(\alpha_{n})$}
\put(159,109){$\circ$}

  \end{picture}
\ \\ \ \\
If $\alpha<  \vartheta(\varepsilon_{\Omega+1})$ and $\alpha \in P$, we can write $\alpha$ as $\vartheta(\beta)$ as in Lemma \ref{alpha strikt kleiner dan theta(epsilon_Omega+1)}. Because every element in $K(\beta)$ is strictly smaller than $\alpha$, we can assume that $g(\gamma)$ is defined for every $\gamma \in K(\beta)$. Define $g(\alpha)$ as $\circ[f(\beta)]$, where we define the binary tree $f(\beta)$ in the following recursive way.

\medskip

Let $f(0)$ be the binary tree with one node and leaf-label $\circ$:

\begin{picture}(1,40)

\put(12,20){\circle*{5}}
\put(18,8){$\circ$}

\end{picture}

Now, let $\beta = \Omega^{\beta_1}\gamma_1+\dots + \Omega^{\beta_n} \gamma_n> \beta_1 > \dots > \beta_n\geq 0$ and assume that $f(\beta_i)$ is already defined for every $i$. Then define $f(\beta)$ as

\begin{picture}(1,175)
\put(60,-4){\circle*{5}}
\put(28,20){\circle*{5}}
\put(92,20){\circle*{5}}
\put(60,44){\circle*{5}}
\put(124,44){\circle*{5}}
\put(92,68){\circle*{5}}
\put(170,85){\circle*{5}}
\put(202,109){\circle*{5}}
\put(138,109){\circle*{5}}

\put(150,55){\begin{rotate}{70} $\ddots$ \end{rotate}}

\put(60,-4){\line(4,3){30}}
\put(60,-4){\line(-4,3){30}}
\put(92,20){\line(4,3){30}}
\put(92,20){\line(-4,3){30}}
\put(124,44){\line(-4,3){30}}
\put(170,85){\line(4,3){30}}
\put(170,85){\line(-4,3){30}}

\put(44,27){$g(\gamma_1)$}
\put(16,35){\circle*{5}}
\put(40,35){\circle*{5}}
\put(28,20){\line(4,5){13}}
\put(28,20){\line(-4,5){13}}

\put(76,51){$g(\gamma_2)$}
\put(48,59){\circle*{5}}
\put(72,59){\circle*{5}}
\put(60,44){\line(4,5){13}}
\put(60,44){\line(-4,5){13}}

\put(108,75){$g(\gamma_3)$}
\put(80,83){\circle*{5}}
\put(104,83){\circle*{5}}
\put(92,68){\line(4,5){13}}
\put(92,68){\line(-4,5){13}}

\put(154,116){$g(\gamma_{n})$}
\put(126,124){\circle*{5}}
\put(150,124){\circle*{5}}
\put(138,109){\line(4,5){13}}
\put(138,109){\line(-4,5){13}}

\put(210,104){$\circ$}

\put(16,35){\line(4,5){23}}
\put(16,35){\line(-4,5){23}}
\put(-8,64){\line(1,0){48}}
\put(3,50){$f(\beta_1)$}

\put(48,59){\line(4,5){23}}
\put(48,59){\line(-4,5){23}}
\put(24,88){\line(1,0){48}}
\put(35,74){$f(\beta_2)$}

\put(80,83){\line(4,5){23}}
\put(80,83){\line(-4,5){23}}
\put(56,112){\line(1,0){48}}
\put(67,98){$f(\beta_3)$}

\put(126,124){\line(4,5){23}}
\put(126,124){\line(-4,5){23}}
\put(102,153){\line(1,0){48}}
\put(113,139){$f(\beta_{n})$}

\end{picture}
\ \\
\ \\
Note that all labels in the tree $f(\beta)$ are elements of $g(K(\beta) \cup \{0\})$. Additionally, every element in $g(K(\beta) \cup \{0\})$ is a label in the tree $f(\beta)$.\\

Is $g$ a quasi-embedding? We will show by induction on $\alpha'$ that $g(\alpha) \leq g(\alpha')$ implies $\alpha\leq \alpha'$. If $\alpha$ or $\alpha'$ is equal to zero, then this is trivial, hence we may assume that both $\alpha$ and $\alpha'$ are different from zero. There are now four cases left:\\

\textit{
a) $g(\omega^{\alpha_1}+ \dots +\omega^{\alpha_n}) \leq g(\omega^{\alpha'_1}+ \dots + \omega^{\alpha'_m})$.}\\
Then either $g(\alpha) \leq g(\alpha'_i)$ or $(g(\alpha_1),\dots,g(\alpha_m))\leq^* (g(\alpha'_1),\dots,g(\alpha'_m))$. In both cases the induction hypothesis yields $\omega^{\alpha_1}+ \dots +\omega^{\alpha_n} \leq \omega^{\alpha'_1}+ \dots + \omega^{\alpha'_m}$.\\

\textit{
b) $g(\vartheta(\beta)) \leq g(\omega^{\alpha'_1}+ \dots + \omega^{\alpha'_m})$.}\\
If $\beta=0$, then it is trivial. Assume $\beta \neq 0$. Then $g(\vartheta(\beta)) \leq g(\omega^{\alpha'_1}+ \dots + \omega^{\alpha'_m})$ is only possible if $g(\vartheta(\beta))\leq g(\alpha'_i)$ for a certain $i$. The induction hypothesis yields $\vartheta(\beta)\leq \omega^{\alpha'_1}+ \dots + \omega^{\alpha'_m} $.\\

\textit{
c) $ g( \omega^{\alpha'_1}+ \dots + \omega^{\alpha'_m})\leq g(\vartheta(\beta))$.}\\
It is impossible that $\beta=0$, because $m\geq 2$. If $0<\beta <\Omega$, then $g(\vartheta(\beta))$ is equal to

\begin{picture}(1,65)
\put(60,-4){\circle*{5}}
\put(60,-4){\line(4,3){30}}
\put(60,-4){\line(-4,3){30}}
\put(92,20){\circle*{5}}
\put(100,20){$\circ$}
\put(28,20){\circle*{5}}

\put(44,27){$g(\beta)$}
\put(16,35){\circle*{5}}
\put(40,35){\circle*{5}}
\put(28,20){\line(4,5){13}}
\put(28,20){\line(-4,5){13}}

\put(5,27){$\circ$}

\end{picture}
\ \\ \ \\
So $g( \omega^{\alpha'_1}+ \dots + \omega^{\alpha'_m})\leq g(\vartheta(\beta))$ can only occur if $g( \omega^{\alpha'_1}+ \dots + \omega^{\alpha'_m})\leq g(\beta)$ because $m\geq 2$. By the induction hypothesis, we obtain that $ \omega^{\alpha'_1}+ \dots + \omega^{\alpha'_m}\leq \beta < \vartheta(\beta)$.\\
In the case that $\beta \geq \Omega$, we have that $ g( \omega^{\alpha'_1}+ \dots + \omega^{\alpha'_m})\leq g(\gamma)$ for a certain $\gamma \in K(\beta) \cup \{0\}$ or for every $\alpha'_i$, there exists a $\gamma_i \in K(\beta) \cup \{ 0 \} $ such that $g(\alpha'_i)\leq g(\gamma_i)$. In the former case, we obtain that  $\omega^{\alpha'_1}+ \dots + \omega^{\alpha'_m}\leq \gamma \leq k(\beta)< \vartheta(\beta)$. In the latter case, we obtain that $\alpha'_i \leq \gamma_i \leq k(\beta) < \vartheta(\beta)$. Because $\vartheta(\beta)$ is an epsilon number ($\beta \geq \Omega$), we get that $ \omega^{\alpha'_1}+ \dots + \omega^{\alpha'_m} < \vartheta(\beta)$.\\

\textit{
d) $ g(\vartheta(\beta))\leq g(\vartheta(\beta'))$}\\
If $\beta'=0$, then $\beta$ must also be zero, hence $\vartheta(\beta) \leq \vartheta(\beta')$. Now assume that $\beta'>0$. $ g(\vartheta(\beta))\leq g(\vartheta(\beta'))$ is possible if $g(\vartheta(\beta)) \leq g(\gamma')$ for a certain $\gamma' \in K(\beta')\cup \{0\}$ or if $f(\beta) \leq f(\beta')$. In the former case, we obtain by the induction hypothesis that $\vartheta(\beta) \leq \gamma' \leq k(\beta') < \vartheta(\beta')$. If the latter case occurs, then for every $\gamma \in K(\beta)$, there exists a $\gamma' \in K(\beta') \cup \{0 \} $ such that $g(\gamma) \leq g(\gamma')$. Hence, $k(\beta) \leq k(\beta')< \vartheta(\beta')$. For ending the proof of $\vartheta(\beta) \leq \vartheta(\beta')$, we need to show that $\beta \leq \beta'$.\\

We proof by induction on $\delta'$ that $f(\delta) \leq f(\delta')$ implies $\delta \leq \delta'$ for every $\delta$ with $K(\delta) \subseteq K(\beta)$ and every $\delta'$ with $K(\delta') \subseteq K(\beta')$. If this is true can conclude that $\beta \leq \beta'$.\\
If $\delta'=0$ or $\delta=0$, then this is trivial. Hence we may assume that both $\delta$ and $\delta'$ are different from zero.  Assume $\delta' =_{NF} \Omega^{\delta'_1} \gamma'_1 + \dots + \Omega^{\delta'_m} \gamma'_m>0$. Then $f(\delta')$ is equal to

\begin{picture}(1,175)
\put(60,-4){\circle*{5}}
\put(28,20){\circle*{5}}
\put(92,20){\circle*{5}}
\put(60,44){\circle*{5}}
\put(124,44){\circle*{5}}
\put(92,68){\circle*{5}}
\put(170,85){\circle*{5}}
\put(202,109){\circle*{5}}
\put(138,109){\circle*{5}}

\put(150,55){\begin{rotate}{70} $\ddots$ \end{rotate}}

\put(60,-4){\line(4,3){30}}
\put(60,-4){\line(-4,3){30}}
\put(92,20){\line(4,3){30}}
\put(92,20){\line(-4,3){30}}
\put(124,44){\line(-4,3){30}}
\put(170,85){\line(4,3){30}}
\put(170,85){\line(-4,3){30}}

\put(44,27){$g(\gamma'_1)$}
\put(16,35){\circle*{5}}
\put(40,35){\circle*{5}}
\put(28,20){\line(4,5){13}}
\put(28,20){\line(-4,5){13}}

\put(76,51){$g(\gamma'_2)$}
\put(48,59){\circle*{5}}
\put(72,59){\circle*{5}}
\put(60,44){\line(4,5){13}}
\put(60,44){\line(-4,5){13}}

\put(108,75){$g(\gamma'_3)$}
\put(80,83){\circle*{5}}
\put(104,83){\circle*{5}}
\put(92,68){\line(4,5){13}}
\put(92,68){\line(-4,5){13}}

\put(154,116){$g(\gamma'_{m})$}
\put(126,124){\circle*{5}}
\put(150,124){\circle*{5}}
\put(138,109){\line(4,5){13}}
\put(138,109){\line(-4,5){13}}

\put(210,104){$\circ$}

\put(16,35){\line(4,5){23}}
\put(16,35){\line(-4,5){23}}
\put(-8,64){\line(1,0){48}}
\put(3,50){$f(\delta'_1)$}

\put(48,59){\line(4,5){23}}
\put(48,59){\line(-4,5){23}}
\put(24,88){\line(1,0){48}}
\put(35,74){$f(\delta'_2)$}

\put(80,83){\line(4,5){23}}
\put(80,83){\line(-4,5){23}}
\put(56,112){\line(1,0){48}}
\put(67,98){$f(\delta'_3)$}

\put(126,124){\line(4,5){23}}
\put(126,124){\line(-4,5){23}}
\put(102,153){\line(1,0){48}}
\put(113,139){$f(\delta'_{m})$}

\put(13,13){\textbf{b}}
\put(28,20){\circle{10}}

\end{picture}
\ \\
\ \\
There are four different subcases:\\

\textit{i) The root of $f(\delta)$ is mapped into $f(\delta_1')$}\\
Then $f(\delta) \leq f(\delta_1')$, hence, by the induction hypothesis $\delta \leq \delta_1' \leq \delta'$.\\

\textit{ii) The root of $f(\delta)$ is mapped on \textbf{b}}\\
This is only possible if $\delta = \Omega^{\delta_1} \gamma_1$. In this case we have that $f(\delta_1) < f(\delta'_1)$, hence $\delta_1 < \delta_1'$. This implies $\delta< \delta'$.\\

\textit{iii) The root of $f(\delta)$ is mapped into the right immediate subtree of $f(\delta')$}\\
If $m=1$, then $\delta$ has to be $0$, hence $\delta \leq \delta'$. If $m>1$, then $f(\delta) \leq f(\delta')$ yields $f(\delta) \leq f(\Omega^{\delta'_2} \gamma'_2 + \dots + \Omega^{\delta'_m} \gamma'_m) $, hence $\delta < \Omega^{\delta'_1} \gamma'_1 + \dots + \Omega^{\delta'_m} \gamma'_m= \delta'$.\\

\textit{iv) The root of $f(\delta)$ is mapped on the root of $f(\delta')$}\\
Let $\delta= \Omega^{\delta_1}\gamma_1+ \dots + \Omega^{\delta_n}\gamma_n$. 
If the immediate left subtree of $f(\delta)$ is mapped into $f(\delta'_1)$, then like in case 
\textit{ii)}, we obtain that $\delta < \delta'$. So suppose not, then $f(\delta_1) \leq f(\delta'_1)$ and $g(\gamma_1) \leq g(\gamma_1')$. Hence $\delta_1 \leq \delta_1'$ and $\gamma_1 \leq \gamma_1'$. If $\delta_1 < \delta'_1$, then $\delta  < \delta'$. Assume $\delta_1 = \delta'_1$. If $\gamma_1 < \gamma'_1$, then $\delta < \delta'$. Hence assume $\gamma_1 = \gamma'_1$. If $m=1$, then $\delta=\delta'$. So assume $m>1$. There are two cases:

\medskip
\textit{- $n=1$}\\
We obtain easily that $\delta < \delta'$.

\medskip
\textit{- $n \geq 2$}\\
Then $f(\Omega^{\delta_2}\gamma_2 + \dots + \Omega^{\delta_n} \gamma_n) \leq f(\Omega^{\delta'_2}\gamma'_2 + \dots + \Omega^{\delta'_m}\gamma'_m)$. Hence the induction hypothesis yields $\Omega^{\delta_2}\gamma_2 + \dots + \Omega^{\delta_n} \gamma_n \leq \Omega^{\delta'_2}\gamma'_2 + \dots + \Omega^{\delta'_m}\gamma'_m$, hence $\delta \leq \delta'$.
   \end{proof}


\section{Bounds on the proof-theoretical ordinals of $RCA_0 + (\Pi_1^1(\Pi^0_3)\text{--}\,CA_0)^-$ and $RCA_0^* + (\Pi_1^1(\Pi^0_3)\text{--}\,CA_0)^-$}\label{sec:the proof-theoretical ordinals of RCA0 + Pi11CA^- and RCA0* + Pi11CA^-}

In the last section, we show some independence and provability results about the already studied $\wpo$'s. To obtain such results, we need bounds on the proof-theoretical ordinals of theories with light-face $\Pi^1_1$-comprehension, which are obtained in this intermediate section. More specifically, we calculate the proof-theoretical ordinal of $RCA_0 + (\Pi_1^1\text{--}\,CA_0)^-$. As a side question, we were wondering what would happen with the ordinal if we replace $RCA_0$ by $RCA_0^*$. We could not pinpoint down the exact strength of these theories, but we could do it for restricted versions. These restricted theories are fortunately strong enough to obtain the independence and provability results that we want. 

\begin{definition}
Let $n \leq \omega$. 
A $\Pi^1_1(\Pi^0_n)$-formula is a formula of the form $\forall X B(X)$, where $B(X)$ is $\Pi^0_n$. $B$ can contain set and numerical parameters. Note that a $\Pi^0_{\omega}$ formula is the same as an arithmetical formula, hence a $\Pi^1_1(\Pi^0_{\omega})$ is a standard $\Pi^1_1$-formula.
A $(\Pi^1_1(\Pi^0_n))^-$-formula is a formula of the form $\forall X B(X)$, where $B(X)$ is $\Pi^0_n$ and $\forall X B(X)$ contains no free set parameters. It is allowed that $B$ contains numerical parameters. 
\end{definition}

\begin{definition}
Let $(\mathcal{F}\text{--}\,CA_0)$ be the following well-known comprehension scheme
\[\exists Z \forall n (n \in Z \leftrightarrow A(n)),\]
where $A(n)$ is a formula in the class $\mathcal{F}$. If $\mathcal{F}= (\Pi^1_1(\Pi^0_n))^-$, we denote $(\mathcal{F}\text{--}\,CA_0)$ by $(\Pi^1_1(\Pi^0_n)\text{--}\,CA_0)^-$ 
\end{definition}

We show the following results.

\begin{theorem}
\leavevmode
\begin{enumerate}
\item $\vert RCA_0 + (\Pi_1^1(\Pi^0_3)\text{--}\,CA_0)^- \vert = \vartheta(\Omega^\omega)$,
\item $\vert RCA_0^* + (\Pi_1^1(\Pi^0_3)\text{--}\,CA_0)^- \vert \geq \varphi\omega 0 = \vartheta(\Omega \cdot \omega)$.
\end{enumerate}
\end{theorem}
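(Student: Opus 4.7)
The plan is to establish each bound separately: the single upper bound in (1) by ordinal analysis, and the lower bounds in (1) and (2) by well-ordering proofs internal to the respective theories.

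For the upper bound of (1), I would embed $RCA_0 + (\Pi_1^1(\Pi^0_3)\text{--}\,CA_0)^-$ into a Tait-style infinitary derivation system with an $\Omega$-rule, following the Buchholz operator-controlled derivations machinery cited in the paper. The crucial leverage is that the comprehension scheme is parameter-free with a $\Pi^0_3$ matrix: each instance of comprehension picks out a set that is definable at a fixed bounded level, so the embedding assigns each set-existence axiom a derivation length below $\Omega + \omega$. After embedding the finitary proofs with initial length bounded by some $\Omega \cdot k + n$, one performs predicative cut-elimination below $\Omega$ and then the Buchholz $\Omega$-collapsing step through $\vartheta$. Parameter-freeness prevents set-indexed iteration of comprehension, so the length of the derivation before collapsing stays below $\Omega^\omega$, and collapsing delivers the bound $\vartheta(\Omega^\omega)$.

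For the matching lower bound of (1), I would show that for each finite $n$ the theory proves transfinite induction on a standard primitive recursive notation for $\vartheta(\Omega^n)$. I would proceed by induction on $n$: at stage $n+1$, the parameter-free comprehension scheme is used to define the accessibility set at the $n$-th level, namely the set of notations whose progression is hereditarily well-founded with respect to the already established $TI(\vartheta(\Omega^n))$. The relevant predicate can be cast in the form $\forall X\, \Pi^0_3(X)$ by coding a candidate descending sequence in $X$ and checking the well-foundedness condition, so the $\Pi_1^1(\Pi^0_3)$ matrix suffices. Since $\vartheta(\Omega^\omega) = \sup_n \vartheta(\Omega^n)$, the theory's ordinal is at least $\vartheta(\Omega^\omega)$.

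For (2), only the lower bound is asserted, so I would adapt the previous construction to the weaker base theory. The key constraint is that $RCA_0^*$ only has $\Sigma^0_0$-induction together with the totality of exponentiation, which prevents the free use of $\Sigma^0_1$-induction that drove the nested iteration over the exponent of $\Omega$ in (1). Instead, for each $n$ I would build a proof of $TI(\varphi n 0) = TI(\vartheta(\Omega\cdot n))$ by iterating the parameter-free $(\Pi_1^1(\Pi^0_3)\text{--}\,CA_0)^-$-defined fixed-point hierarchy $n$ times in the ambient arithmetic, verifying each closure condition by bounded induction and the provable totality of $\exp$. Taking the supremum over $n$ yields $\varphi\omega 0 = \vartheta(\Omega\cdot \omega)$.

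The main obstacle I anticipate is the upper bound in (1): one has to show that the combined effect of parameter-freeness and the $\Pi^0_3$ restriction on the matrix tightens the collapsing estimate exactly to $\Omega^\omega$ inside the argument of $\vartheta$, rather than a larger ordinal such as $\Omega^\Omega$. For the lower bound in (2) the delicate point is engineering the well-ordering proofs so that every application of induction in the construction remains $\Sigma^0_0$, which typically requires encoding hypotheses as bounded quantifier statements using the comprehension-defined hierarchies as oracles.
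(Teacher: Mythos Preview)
Your lower-bound strategy matches the paper's in both parts: one defines $Acc$ and $Acc_\Omega = \{\alpha \in M : \vartheta\alpha \in Acc\}$ by a $(\Pi^1_1(\Pi^0_3))^-$-formula, proves $Prog_\Omega(Acc_\Omega)$ already over $RCA_0^*$, and then establishes jump lemmas $Prog_\Omega(A) \to (\forall\beta <_\Omega \alpha + \Omega^k)\,A(\beta)$ by outer meta-induction on $k$; over $RCA_0$ the internal step uses transfinite induction for $\Pi^0_1$ formulas along well-orders, so the jump reaches $\Omega^k$, while over $RCA_0^*$ only $\Sigma^0_0$-induction is available, limiting the jump to $\Omega\cdot k$ --- exactly the distinction you anticipated. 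The upper bound in (1), however, is obtained quite differently from your proposed direct $\Omega$-rule analysis: the paper instead reduces the theory to $\Pi^1_2\text{--}BI_0$ by showing, inside $\Pi^1_2\text{--}BI_0$, that for parameter-free $\Pi^0_3$ matrices $B$ one has $\forall X\,B(X,n) \leftrightarrow \forall A \in Rec((\Pi^1_1)^-)\,B(A(\cdot),n)$, using $\Sigma^1_4$-$\omega$-model reflection for one direction and a $\Sigma^1_2$-defined Skolem function (with $\Sigma^1_2$-induction for its totality) for the other; the bound then follows from the known $|\Pi^1_2\text{--}BI_0| = \vartheta(\Omega^\omega)$. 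Your direct collapsing approach is plausible in outline, but the paper's conservation argument avoids a bespoke cut-elimination and makes transparent why $\Pi^0_3$ is the correct complexity threshold on the matrix.
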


For clarity, we give the definition of $ACA_0$, $RCA_0$ and $RCA_0^*$

\begin{definition}
Both $ACA_0$ and $RCA_0$ are theories in the language of second order arithmetic $L_2$. The axioms of $ACA_0$ are the basic axioms in second order arithmetic and the induction axiom for sets together with comprehension for arithmetical formulas. $RCA_0$ consists of the basic axioms, $\Sigma^0_1$-induction and $\Delta^0_1$-comprehension. Let $L_2(exp)$ be the language $L_2$ augmented by a binary operation symbol $exp$ that denotes the exponential function. Define $RCA_0^*$ as the $L_2(exp)$-theory consisting of the basic axioms, the exponentiation axioms, $\Sigma^0_0$-induction and $\Delta^0_1$-comprehension.
\end{definition}

From more information on reverse mathematics and theories in second order arithmetic, we refer the reader to \cite{sosoa}. 


\subsection{Lower bounds}\label{sec:lower bounds}
These proofs and definitions follow the procedure as in \cite{rathjenweiermann}, but they need some refinements. 
Firstly, we give a primitive recursive ordinal notation system for $\vartheta(\varepsilon_{\Omega+1})$ which is suitable for using in $ACA_0 +(\mathcal{F}\text{--}\,CA_0)$. Then, we introduce an notation system without $\omega$-exponentiation which is suitable to use in $RCA_0 +(\mathcal{F}\text{--}\,CA_0)$ and $RCA_0^*  +(\mathcal{F}\text{--}\,CA_0)$. 

\begin{definition}
Define inductively a set $OT(\vartheta)$ of ordinals and a natural number $G_\vartheta \alpha$ for $\alpha \in OT(\vartheta)$ as follows:
\begin{enumerate}
\item $0 \in OT(\vartheta)$ and $G_\vartheta(0):= 0 $,
\item if $\alpha = \Omega^{\alpha_1} \beta_1 + \dots + \Omega^{\alpha_n} \beta_n$ with $n\geq 1$, $\alpha_1 > \dots > \alpha_n$ and $\Omega > \beta_1,\dots, \beta_n >0$, then
\begin{enumerate}
\item if ($n>1$ or $\alpha_1 >0$) and $\alpha_1,\dots,\alpha_n,\beta_1,\dots,\beta_n \in OT(\vartheta)$, then $\alpha \in OT(\vartheta)$ and $G_\vartheta\alpha :=  \max \{G_\vartheta(\alpha_1), \dots, G_\vartheta(\alpha_n), G_\vartheta(\beta_1), \dots, G_\vartheta(\beta_n)\}+1$,
\item if $n=1$, $\alpha_1=0$ and $\alpha = \omega^{\delta_1}+ \dots + \omega^{\delta_m} > \delta_1 \geq \dots \geq \delta_m$ with $m\geq 2$ and $\delta_1,\dots, \delta_m \in OT(\vartheta)$, then $\alpha \in OT(\vartheta)$ and\\
$G_\vartheta\alpha := \max \{ G_\vartheta(\delta_1), \dots, G_\vartheta(\delta_m)\}+1$,
\end{enumerate}
\item if $\alpha = \vartheta \beta$ and $\beta \in OT(\vartheta)$, then $\alpha \in OT(\vartheta)$ and $G_\vartheta \alpha := G_\vartheta \beta +1$.
\end{enumerate}
\end{definition}

Because $\vartheta\beta$ is always additively closed and $\vartheta$ is injective, the function $G_\vartheta$ is well-defined. Remark that $\Omega \in OT(\vartheta)$, because $\Omega= \Omega^1 \cdot 1$ and $1= \vartheta(0)$.

\begin{lemma}\label{G_theta on OT(theta)}
If $\xi \in OT(\vartheta)$, then $K(\xi) \subseteq OT(\vartheta)$. Furthermore, $G_\vartheta(k(\xi)) \leq G_\vartheta(\xi)$ for all $\xi$ in $OT(\vartheta)$.
\end{lemma}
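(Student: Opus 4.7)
I would prove both claims simultaneously by induction on $G_\vartheta(\xi)$, equivalently by induction on the clause that admits $\xi$ into $OT(\vartheta)$. The base case $\xi = 0$ is immediate: $K(0) = \{0\} \subseteq OT(\vartheta)$, $k(0) = 0$, and $G_\vartheta(k(0)) = 0 = G_\vartheta(0)$.

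The decisive observation for the induction step is that whenever $0 < \xi < \Omega$, the base-$\Omega$ representation $\xi = \Omega^{\alpha_1}\beta_1 + \dots + \Omega^{\alpha_n}\beta_n$ collapses to the single summand $\Omega^0 \cdot \xi$, so by the recursive definition of $K$ one gets $K(\xi) = \{\xi\} \cup K(0) = \{0,\xi\}$ and $k(\xi) = \xi$. Since $\xi \in OT(\vartheta)$ by hypothesis and $0 \in OT(\vartheta)$ by clause 1, this immediately yields $K(\xi) \subseteq OT(\vartheta)$ and $G_\vartheta(k(\xi)) = G_\vartheta(\xi)$. This disposes at once of every $\xi$ admitted via clause 2(b) or clause 3, since both produce countable ordinals. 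The only remaining case is clause 2(a), in which $\xi = \Omega^{\alpha_1}\beta_1 + \dots + \Omega^{\alpha_n}\beta_n$ with $n>1$ or $\alpha_1 > 0$; note that the condition $\alpha_1 > \dots > \alpha_n \geq 0$ combined with either side of the disjunction forces $\alpha_1 \geq 1$, so $\xi \geq \Omega$ and there is no overlap with the countable subcase.

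In that final subcase the defining recursion gives $K(\xi) = \{\beta_1,\dots,\beta_n\} \cup K(\alpha_1) \cup \dots \cup K(\alpha_n)$. Each $\beta_j$ lies in $OT(\vartheta)$ by the hypothesis of clause 2(a), and the induction hypothesis (applied to each $\alpha_i$, which carries $G_\vartheta(\alpha_i) < G_\vartheta(\xi)$) supplies $K(\alpha_i) \subseteq OT(\vartheta)$, proving the first claim. For the second, note that $k(\xi) = \max\{\beta_1,\dots,\beta_n, k(\alpha_1),\dots,k(\alpha_n)\}$; by the induction hypothesis $G_\vartheta(k(\alpha_i)) \leq G_\vartheta(\alpha_i)$, hence
\[G_\vartheta(k(\xi)) \leq \max_{j}\{G_\vartheta(\alpha_j), G_\vartheta(\beta_j)\} = G_\vartheta(\xi) - 1 < G_\vartheta(\xi),\]
which is even stronger than the asserted inequality. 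I do not expect any genuine obstacle; the only subtle point is recognizing that clauses 2(b) and 3 contribute only ordinals below $\Omega$, so the $K$-recursion bottoms out on them immediately rather than requiring a further unpacking.
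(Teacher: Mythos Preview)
Your proof is correct and follows essentially the same route as the paper: induction on $G_\vartheta(\xi)$, with the countable cases (clauses 2(b) and 3) handled trivially via $k(\xi)=\xi$, and clause 2(a) handled by applying the induction hypothesis to the exponents $\alpha_i$. Your grouping of the two countable clauses under the single observation $0<\xi<\Omega \Rightarrow k(\xi)=\xi$ is a slight streamlining of the paper's case split, and your remark that clause 2(a) forces $\alpha_1\geq 1$ (hence no overlap with the countable subcase) makes explicit something the paper leaves implicit.
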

\begin{proof}
We proof this by induction on $G_\vartheta(\xi)$. If $\xi=0$, then this trivially holds. 
If $\xi = \vartheta(\xi')$, then $K(\xi) = \{\xi\}$, hence this also trivially holds. Assume $\xi = \Omega^{\xi_1} \beta_1 + \dots + \Omega^{\xi_n} \beta_n$ with $n\geq 1$, $\xi_1> \dots > \xi_n$ and $\Omega > \xi_1, \dots, \xi_n > 0$. If $n=1$ and $\xi_1 = 0$, then also $K(\xi) = \{\xi\}$, hence the proof is valid. Let now $n>1$ or $\xi_1 >0$. Then $K(\xi) = \{\beta_1, \dots, \beta_n\} \cup K(\xi_1) \cup \dots \cup K(\xi_n)$. The induction hypothesis yields $K(\xi) \subseteq OT(\vartheta)$. Furthermore, $G_\vartheta(k(\xi_i)) \leq G_\vartheta(\xi_i) < G_\vartheta(\xi)$. Therefore, the strict inequality $G_\vartheta(k(\xi)) = G_\vartheta(\max_i\{k(\xi_i),\beta_i\}) < G_\vartheta(\xi)$ holds.
\end{proof}

Each ordinal $\alpha \in OT(\vartheta)$ has a unique normal form using the symbols $0, \omega, \Omega,+,$ $ \vartheta$. The relation $\alpha < \beta$ can expressed using the ordinals appearing in their normal form (by Lemma \ref{main property theta-function}), which have strictly smaller $G_\vartheta$-values by the previous Lemma \ref{G_theta on OT(theta)}. Hence, the following lemma holds.

\begin{lemma}\label{ONS for Howard-Bachmann}
If we use a specific coding of $(OT(\vartheta),<_{OT(\vartheta)})$ in the natural numbers, then $(OT(\vartheta) \cap \Omega ,<_{OT(\vartheta)})$ can be interpreted as a primitive recursive ordinal notation system for the ordinal $\vartheta(\varepsilon_{\Omega+1})$. Furthermore, one can choose this coding in such a way that $\forall \xi \in K(\alpha) (\xi \leq_{\mathbb{N}} \alpha)$.
\end{lemma}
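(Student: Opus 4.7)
The plan is to exhibit an explicit primitive recursive coding of terms of $OT(\vartheta)$ in $\mathbb{N}$, verify that comparison is primitive recursive via course-of-values recursion, and read off the order-type calculation from the definition of $OT(\vartheta)$.

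I would first fix a primitive recursive tagged tupling $\langle \tau, x_1, \ldots, x_n \rangle$ that satisfies the monotonicity property $\max_i x_i < \langle \tau, x_1, \ldots, x_n \rangle$ (any Cantor-style pairing followed by a shift does the job) and then define a code $\ulcorner \alpha \urcorner \in \mathbb{N}$ by recursion on $G_\vartheta(\alpha)$ following the three clauses in the definition of $OT(\vartheta)$: $\ulcorner 0 \urcorner := \langle 0 \rangle$; for a sum $\alpha = \Omega^{\alpha_1}\beta_1 + \dots + \Omega^{\alpha_n}\beta_n$ with $n>1$ or $\alpha_1>0$, $\ulcorner \alpha \urcorner := \langle 1, \ulcorner \alpha_1 \urcorner, \ulcorner \beta_1 \urcorner, \ldots, \ulcorner \alpha_n \urcorner, \ulcorner \beta_n \urcorner \rangle$; for a pure $\omega$-sum $\omega^{\delta_1}+\ldots+\omega^{\delta_m}$, $\ulcorner \alpha \urcorner := \langle 2, \ulcorner \delta_1 \urcorner, \ldots, \ulcorner \delta_m \urcorner \rangle$; and $\ulcorner \vartheta\beta \urcorner := \langle 3, \ulcorner \beta \urcorner \rangle$. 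By the monotonicity of the tupling, every sub-term of $\alpha$ used in the normal form has a code strictly less than $\ulcorner \alpha \urcorner$.

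Next I would verify that the set of codes is primitive recursive and that the ordering is primitive recursive. The recognition predicate runs a course-of-values recursion over codes, checking at each level that the tag matches an admissible case and that the sub-codes satisfy the recursive side-conditions (strict descent of the exponents $\alpha_1 > \dots > \alpha_n$, weak descent of $\delta_1 \geq \dots \geq \delta_m$, and $\delta_1 < \alpha$ in the $\omega$-sum case). For the ordering, I would split the comparison on the tags and reduce: sums are compared lexicographically via their normal forms; $\vartheta$-terms are compared using Lemma \ref{main property theta-function}, which recasts $\vartheta\alpha < \vartheta\beta$ in terms of comparisons involving $\alpha$, $\beta$ and $k(\alpha)$, $k(\beta)$. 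Since $k(\alpha)$ can be computed primitive recursively from $\ulcorner \alpha \urcorner$ (by the recursion in the definition of $K$, which only refers to codes strictly below $\ulcorner \alpha \urcorner$), the whole comparison test is primitive recursive. Mixed cases (sum versus $\vartheta$-term) are handled by reading off the leading summand and comparing.

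To conclude the order-type statement, I would show by induction on $\alpha$ that every $\alpha < \vartheta(\varepsilon_{\Omega+1})$ lies in $OT(\vartheta) \cap \Omega$: if $\alpha \in P$, apply Lemma \ref{alpha strikt kleiner dan theta(epsilon_Omega+1)} to write $\alpha = \vartheta\beta$ with $k(\beta) < \alpha$, then apply the induction hypothesis to $\beta$ and to the elements of $K(\beta)$; otherwise use Cantor normal form to split $\alpha$ into a sum of elements in $P$ and recurse. Conversely, nothing in $OT(\vartheta) \cap \Omega$ exceeds $\vartheta(\varepsilon_{\Omega+1})$, since every code names an ordinal built from $0, \omega, \Omega, +, \vartheta$ in accordance with the definition. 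The additional monotonicity condition $\xi \in K(\alpha) \Rightarrow \ulcorner \xi \urcorner \leq \ulcorner \alpha \urcorner$ is immediate from the construction: by induction on $G_\vartheta(\alpha)$, using that $K(\alpha)$ is built from coefficients $\beta_i$ and from the sets $K(\alpha_i)$, each of which is a set of sub-terms whose codes are bounded by the codes of the immediate constituents of $\alpha$, all of which are $< \ulcorner \alpha \urcorner$ by the tupling inequality.

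The main obstacle is the circularity worry in proving primitive recursiveness of $<_{OT(\vartheta)}$: the clause for $\vartheta\alpha <_{OT(\vartheta)} \vartheta\beta$ refers to $k(\alpha), k(\beta)$, and one must be sure both that these can be extracted primitive recursively from the codes and that the recursion is well-founded on code size. The monotonicity property of the tupling, together with Lemma \ref{G_theta on OT(theta)} (which guarantees $G_\vartheta(k(\alpha)) \leq G_\vartheta(\alpha)$), makes this a routine course-of-values recursion; I expect no genuine difficulty beyond careful bookkeeping.
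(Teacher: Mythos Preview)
Your proposal is correct and follows the same line as the paper, only in far greater detail. The paper does not give a formal proof of this lemma at all: it merely observes, in the paragraph preceding the statement, that every $\alpha\in OT(\vartheta)$ has a unique normal form in the symbols $0,\omega,\Omega,+,\vartheta$, that the comparison $\alpha<\beta$ reduces via Lemma~\ref{main property theta-function} to comparisons between normal-form components, and that these components have strictly smaller $G_\vartheta$-values by Lemma~\ref{G_theta on OT(theta)}; from this it declares the lemma to hold. Your explicit tagged-tupling code, your course-of-values recursion for the ordering, and your appeal to Lemma~\ref{alpha strikt kleiner dan theta(epsilon_Omega+1)} for the order-type calculation are exactly the details one would fill in to substantiate that sketch, and your derivation of the monotonicity property $\xi\in K(\alpha)\Rightarrow \ulcorner\xi\urcorner\le\ulcorner\alpha\urcorner$ from the strict monotonicity of the tupling is the intended mechanism.
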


This is the ordinal notation system that we will use if we work in the theory $ACA_0 +(\mathcal{F}\text{--}\,CA_0)$. However, in the theory $RCA_0 + (\mathcal{F}\text{--}\,CA_0)$ and $RCA_0^* + (\mathcal{F}\text{--}\,CA_0)$, we use a different ordinal notation system $OT'(\vartheta)$ without $\omega$-exponen\-ti\-a\-tion.

\begin{definition}
Define inductively a set $OT'(\vartheta)$ of ordinals and a natural number $G'_\vartheta \alpha$ for $\alpha \in OT'(\vartheta)$ as follows:
\begin{enumerate}
\item $0\in OT'(\vartheta)$ and $G'_\vartheta(0):=0$,
\item if $\alpha = \Omega^{n} \beta_n + \dots + \Omega^{0} \beta_0$, with $\Omega > \beta_n > 0$ and $\Omega > \beta_0,\dots, \beta_{n-1}$, then
\begin{enumerate}
\item if $n>0$ and $\beta_0,\dots,\beta_n \in OT'(\vartheta)$, then $\alpha \in OT'(\vartheta)$ and \\
$G'_\vartheta\alpha := \max \{G'_\vartheta(\beta_0), \dots , G'_\vartheta(\beta_n)\}+1$,
\item if $n=0$ and $\alpha =_{NF} \delta_1+ \dots + \delta_m$ with $m\geq 2$ and $\delta_1,\dots, \delta_m \in OT'(\vartheta)$, then $\alpha \in OT'(\vartheta)$ and $G'_\vartheta\alpha := \max \{G'_\vartheta(\delta_1), \dots , G'_\vartheta(\delta_m)\}+1$,
\end{enumerate}
\item if $\alpha = \vartheta \beta$ and $\beta \in OT'(\vartheta)$, then $\alpha \in OT'(\vartheta)$ and $G'_\vartheta \alpha := G'_\vartheta \beta +1$.
\end{enumerate}
$NF$ stands for normal form.
\end{definition}

Also in this ordinal notation system, $G'_\vartheta$ is well-defined.

\begin{lemma}\label{G'_theta on OT'(theta)}
If $\xi \in OT'(\vartheta)$, then $K(\xi) \subseteq OT'(\vartheta)$. Furthermore, $G'_\vartheta(k(\xi)) \leq G'_\vartheta(\xi)$ for all $\xi$ in $OT'(\vartheta)$.
\end{lemma}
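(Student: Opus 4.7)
The plan is to mimic the proof of Lemma \ref{G_theta on OT(theta)}: induction on $G'_\vartheta(\xi)$ with a case split driven by the clauses defining $OT'(\vartheta)$. For $\xi=0$, for $\xi=\vartheta(\beta)$, and for the sub-$\Omega$ case built via clause (2b), the base-$\Omega$ Cantor normal form of $\xi$ is just $\Omega^0\cdot\xi$ (or $0$), so $K(\xi)\subseteq\{\xi,0\}$ and $k(\xi)=\xi$; both claims of the lemma are then immediate, exactly as in the proof of Lemma \ref{G_theta on OT(theta)}.

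The real work is clause (2a), where $\xi=\Omega^n\beta_n+\dots+\Omega^0\beta_0$ with $n>0$ and $\beta_n>0$. In the base-$\Omega$ Cantor normal form of $\xi$ only the nonzero $\beta_i$ survive, paired with the corresponding exponent $i\in\{0,\dots,n\}$, so
\[
K(\xi)=\{\beta_i:\beta_i>0\}\cup\bigcup_{i:\beta_i>0}K(i).
\]
The $\beta_i$ lie in $OT'(\vartheta)$ by hypothesis; for the exponents I would first record the side observation that every natural number $k$ belongs to $OT'(\vartheta)$ with $G'_\vartheta(k)\leq 2$. This is obtained from $0\in OT'(\vartheta)$, $1=\vartheta(0)$ (so $G'_\vartheta(1)=1$), and the representation $k=1+\dots+1$ via clause (2b) for $k\geq 2$. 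Since $K(i)\subseteq\{0,1,\dots,n\}$ for each exponent $i$, this yields $K(\xi)\subseteq OT'(\vartheta)$.

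For the inequality, split on whether $k(\xi)=\max K(\xi)$ is realized by one of the coefficients $\beta_i$ or by an exponent. In the first subcase, $G'_\vartheta(k(\xi))\leq\max_i G'_\vartheta(\beta_i)<G'_\vartheta(\xi)$ follows directly from clause (2a) of the definition of $G'_\vartheta$. In the second subcase, $G'_\vartheta(k(\xi))\leq 2$ by the side observation; since $\beta_n>0$ forces $G'_\vartheta(\beta_n)\geq G'_\vartheta(1)=1$, the definition gives $G'_\vartheta(\xi)\geq G'_\vartheta(\beta_n)+1\geq 2$, so the bound still holds. The main obstacle, compared to Lemma \ref{G_theta on OT(theta)}, is precisely that $G'_\vartheta(\xi)$ no longer records the complexity of the exponents as in $OT(\vartheta)$; one has to compensate with the uniform bound $G'_\vartheta(k)\leq 2$ for natural numbers $k$, which is exactly what the absence of $\omega$-exponentiation in $OT'(\vartheta)$ permits via clause (2b).
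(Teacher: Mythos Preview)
Your proof is correct and follows essentially the same route as the paper's: a case split on the defining clauses of $OT'(\vartheta)$, with the key step in clause (2a) being the observation that natural numbers $k$ satisfy $G'_\vartheta(k)\leq 2$ (via $1=\vartheta(0)$ and $k=1+\cdots+1$), together with $G'_\vartheta(\xi)\geq G'_\vartheta(\beta_n)+1\geq 2$. The only cosmetic difference is that you frame it as an induction on $G'_\vartheta(\xi)$, but you never actually invoke an inductive hypothesis (nor does the paper), since here the exponents are natural numbers and no recursive unwinding of $K$ is needed---unlike in Lemma~\ref{G_theta on OT(theta)}.
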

\begin{proof}
If $\xi=0$, then this trivially holds. 
If $\xi = \vartheta(\xi')$, then $K(\xi) = \{\xi\}$, hence this also trivially holds. 
Assume $\xi = \Omega^{n} \xi_n + \dots + \Omega^{0} \xi_0$ with $\Omega>\xi_n > 0$ and $\Omega > \xi_1, \dots, \xi_{-1}$. If $n=0$, then also $K(\xi) = \{\xi\}$, hence the proof is valid. Let now $n>0$. Then $K(\xi) \subseteq \{n,\dots,0,\xi_0, \dots, \xi_n\}$. We know $\xi_0,\dots,\xi_n \in OT'(\vartheta)$. Additionally, it is trivial to show $\{n,\dots,0\} \subseteq OT'(\vartheta)$. Hence, $K(\xi) \subseteq OT'(\vartheta)$. 
It is also trivial to show that $G'_\vartheta(m) \leq 2$ for all natural numbers $m$. Therefore, $G'_\vartheta(n) \leq G'_\vartheta(\xi)$ because $G'_\vartheta(\xi_n)\geq 1$. Also, $G'_\vartheta(\xi_i) < G'_\vartheta(\xi)$. Hence $G'_\vartheta(k(\xi)) = G'_\vartheta(\max_i\{n,\xi_i\}) < G'_\vartheta(\xi)$.
\end{proof}

Like for the first ordinal notation system, we have to following lemma.

\begin{lemma}\label{ONS for ackermann}
If we use a specific coding of $(OT'(\vartheta),<_{OT'(\vartheta)})$ in the natural numbers, $(OT'(\vartheta) \cap \Omega,<_{OT'(\vartheta)})$ can be interpreted as a primitive recursive ordinal notation system for the ordinal $\vartheta(\Omega^\omega)$. Furthermore, one can choose this coding in such a way that $\forall \xi \in K(\alpha) (\xi \leq_{\mathbb{N}} \alpha)$.
\end{lemma}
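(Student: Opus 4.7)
The plan is to follow the template of Lemma \ref{ONS for Howard-Bachmann}, adapted to the simpler term structure of $OT'(\vartheta)$, in which $\omega$-exponentiation has been replaced by $\Omega$-polynomials with constant (natural-number) exponents. First I would fix a primitive recursive Gödel-style coding of $OT'(\vartheta)$ sending each element to the code of its normal-form syntax tree over the signature $\{0,\Omega,+,\vartheta\}$. Unique readability of the normal form is built into the inductive clauses of the definition, so this coding is injective and the basic syntactic predicates are primitive recursive. Using Lemma \ref{main property theta-function}, the relation $<_{OT'(\vartheta)}$ then reduces to comparisons between ordinals of strictly smaller $G'_\vartheta$-value (i.e.\ the $\Omega$-coefficients for the polynomial case, and the $\vartheta$-arguments for the $\vartheta$-case); combined with Lemma \ref{G'_theta on OT'(theta)} this gives a terminating, primitive recursive algorithm for $<_{OT'(\vartheta)}$ in the codes.

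Next I would identify $OT'(\vartheta) \cap \Omega$ with the set of ordinals below $\vartheta(\Omega^\omega)$. For the upper bound, a straightforward external induction on $G'_\vartheta$ shows that every $\vartheta$-argument generated by clause (2a) is an $\Omega$-polynomial $\Omega^n\beta_n+\dots+\beta_0$ with $n<\omega$ and $\beta_i<\Omega$, and so is strictly below $\Omega^\omega$; consequently every $\vartheta$-value in $OT'(\vartheta)\cap\Omega$ lies below $\vartheta(\Omega^\omega)$, and additive sums allowed by clause (2b) preserve this bound since $\vartheta(\Omega^\omega)$ is an epsilon number. For the lower bound, I would argue by transfinite induction on $\alpha < \vartheta(\Omega^\omega)$. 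A non-principal $\alpha$ decomposes as a descending sum of additive principal ordinals that are all strictly below $\alpha$, so it is handled by the induction hypothesis and clause (2b). For a principal $\alpha\in P$, Lemma \ref{alpha strikt kleiner dan theta(epsilon_Omega+1)} supplies $\beta$ with $\alpha=\vartheta(\beta)$ and $k(\beta)<\alpha$; applying the case analysis of Lemma \ref{main property theta-function} to the two inequalities $\vartheta(\beta)<\vartheta(\Omega^\omega)$ and $k(\beta)<\vartheta(\beta)$ forces $\beta<\Omega^\omega$, so $\beta$ is an $\Omega$-polynomial with finite exponents whose coefficients are all $\leq k(\beta)<\alpha$ and thus already carry notations by the induction hypothesis.

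Finally, the numerical inequality $\xi\leq_{\mathbb{N}}\alpha$ for $\xi\in K(\alpha)$ reduces to an arrangement of the coding: I would choose a subterm-monotone Gödel numbering (each term is assigned a code strictly exceeding those of its immediate subterms). Since Lemma \ref{G'_theta on OT'(theta)} shows $K(\alpha)\subseteq OT'(\vartheta)$ and traces every element of $K(\alpha)$ to a coefficient, hence a subterm, in the normal form of $\alpha$, the numerical bound then follows immediately. The main technical subtlety in the whole proof is the principal-ordinal step above: one must verify that the witness $\beta$ supplied by Lemma \ref{alpha strikt kleiner dan theta(epsilon_Omega+1)} contains no hidden $\omega$-exponentiation, i.e.\ lives in $OT'(\vartheta)$ rather than merely in $OT(\vartheta)$. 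This is exactly what the Lemma \ref{main property theta-function} case analysis described above delivers, and it is the only place where the reduced expressive power of $OT'(\vartheta)$ must be cross-checked against the defining machinery of the $\vartheta$-function.
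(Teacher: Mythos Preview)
Your proposal is correct and follows precisely the route the paper intends: the paper gives no explicit proof here, merely remarking ``Like for the first ordinal notation system, we have the following lemma,'' so your argument is exactly the elaboration of the sketch preceding Lemma~\ref{ONS for Howard-Bachmann} (unique normal forms, recursive comparison via Lemma~\ref{main property theta-function} with termination by Lemma~\ref{G'_theta on OT'(theta)}, and a subterm-monotone coding). One tiny inaccuracy: for $\alpha$ arising from clause~(2a), $K(\alpha)$ contains not only the coefficients $\beta_i$ but also the natural-number exponents $0,\dots,n$, so ``every element of $K(\alpha)$ is a coefficient'' is not literally true; however, since these exponents are bounded by $n$, which is explicit in the term, the desired numerical bound is still easy to arrange.
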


From now on, we fix primitive recursive ordinal notation systems $OT(\vartheta)\cap \Omega$ and $OT'(\vartheta)\cap \Omega$. If we mention $ACA_0$ in the beginning of a theorem, then we assume that we work in $OT(\vartheta)$. Similarly, if we mention $RCA_0$ or $RCA_0^*$ in the beginning of a theorem, we assume that we work in $OT'(\vartheta)$.

\begin{definition}
\begin{align*}
Prog(\prec,F)&:= \forall x (\forall y(y\prec x \rightarrow F(y)) \rightarrow F(x)),\\
TI(\prec,F)& := Prog(\prec,F) \rightarrow \forall x F(x),\\
WF(\prec) & := \forall X TI(\prec,X),
\end{align*}
where the formula $Prog(\prec,F)$ stands for progressiveness, $TI(\prec,F)$ for transfinite induction and $WF(\prec)$ for well-foundedness. $F(x)$ is an arbitrary $L_2(exp)$-formula if we work in $RCA_0^*$ or an arbitrary $L_2$-formula if we work in $RCA_0$ or $ACA_0$. For an element $\alpha \in OT(\vartheta)$, the formula $WF(\alpha)$ stands for `$<_{OT(\vartheta)}$ restricted to $\{\beta \in OT(\vartheta) : \beta < \alpha \}$ is well-founded'. We sometimes also denote this as $WF(< \upharpoonright \alpha)$. Similarly for $OT'(\vartheta)$.
\end{definition}

\begin{definition}
\begin{enumerate}
\item $Acc:= \{\alpha < \Omega : WF(< \upharpoonright \alpha)\}$,
\item $M := \{\alpha : K(\alpha) \subseteq Acc\}$,
\item $\alpha  <_{\Omega} \beta \Leftrightarrow \alpha, \beta \in M \wedge \alpha < \beta$.
\end{enumerate}
\end{definition}

The next lemma shows that $Acc$, $M$ and $<_{\Omega}$ can be expressed by a $(\Pi^1_1(\Pi^0_3))^-$-formula.

\begin{lemma}
$Acc$, $M$ and $<_{\Omega}$ are expressible by a $(\Pi^1_1(\Pi^0_3))^-$-formula.
\end{lemma}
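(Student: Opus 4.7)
The plan is to unfold each of the three predicates into a formula of the shape $\forall X\, B(X)$ with $B$ of complexity $\Pi^0_3$ and no free second-order parameter. For $Acc(\alpha)$, unfolding the definitions of $WF$, $TI$ and $Prog$ gives
\[
Acc(\alpha) \iff \alpha < \Omega \wedge \forall X\,\bigl[\forall x(\forall y(y \prec_\alpha x \to y \in X) \to x \in X) \to \forall x\,(x \in X)\bigr],
\]
where $\prec_\alpha$ abbreviates $<_{OT(\vartheta)}$ restricted to the elements below $\alpha$. Since the ordinal notation system is primitive recursive (Lemma \ref{ONS for Howard-Bachmann}, resp.\ Lemma \ref{ONS for ackermann}), every subformula not involving $X$ is $\Delta^0_0$. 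Pushing negations past the outer implication, the matrix inside $\forall X$ becomes
\[
\exists x\,\bigl(\forall y(y \prec_\alpha x \to y \in X)\wedge x \notin X\bigr) \,\vee\, \forall x\,(x \in X),
\]
i.e.\ a $\Sigma^0_2 \vee \Pi^0_1$-formula, which collapses to a single $\Sigma^0_2$-formula (pull out the $\exists x$ past the disjunction since $x$ is not free in the second disjunct and then merge the universal quantifiers), and is a fortiori $\Pi^0_3$. The conjunct $\alpha < \Omega$ is primitive recursive, $\alpha$ is purely a numerical parameter, and no free set parameter occurs, so $Acc(\alpha)$ is already of the required form.

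For $M(\alpha)$ I would use the coding property noted at the end of Lemma \ref{ONS for Howard-Bachmann} (resp.\ Lemma \ref{ONS for ackermann}): every $\xi \in K(\alpha)$ satisfies $\xi \leq_{\mathbb{N}} \alpha$, and the finite set $K(\alpha)$ is computed primitive recursively. Hence
\[
M(\alpha) \iff \forall \beta \leq \alpha\,\bigl(\beta \in K(\alpha) \to Acc(\beta)\bigr).
\]
Pulling the second-order quantifier of $Acc(\beta)$ out past the bounded numerical one via $\forall \beta \leq \alpha\,\forall X\,\theta(X,\beta) \equiv \forall X\,\forall \beta \leq \alpha\,\theta(X,\beta)$ leaves a single $\forall X$ whose matrix is still $\Pi^0_3$, because bounded numerical quantifiers preserve $\Pi^0_3$. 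The only free variable remaining is $\alpha$, so $M$ is again expressible by a $(\Pi^1_1(\Pi^0_3))^-$-formula.

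Finally, $\alpha <_\Omega \beta$ is the conjunction $M(\alpha) \wedge M(\beta) \wedge \alpha <_{OT(\vartheta)} \beta$. Two universally set-quantified formulas combine as $\forall X\,A(X) \wedge \forall X\,B(X) \equiv \forall X\,(A(X)\wedge B(X))$, conjunctions of $\Pi^0_3$-formulas are $\Pi^0_3$, and $\alpha <_{OT(\vartheta)} \beta$ is primitive recursive, so the result is again $(\Pi^1_1(\Pi^0_3))^-$. The only step that needs genuine care is the quantifier-complexity count for the matrix of $WF$, but since the outcome ($\Sigma^0_2$) is strictly simpler than the claimed $\Pi^0_3$-bound, I do not expect any serious obstacle; the rest is routine Boolean manipulation plus the coding observations from Lemmas \ref{ONS for Howard-Bachmann} and \ref{ONS for ackermann}.
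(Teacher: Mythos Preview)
Your proposal is correct and follows essentially the same approach as the paper: unfold $WF$ to see the matrix is at most $\Pi^0_3$, express $M$ via the bounded search $\forall \xi \leq_{\mathbb N} \alpha(\xi \in K(\alpha) \to Acc(\xi))$ using the coding from Lemmas~\ref{ONS for Howard-Bachmann}/\ref{ONS for ackermann}, and then combine for $<_\Omega$. The paper simply asserts that the prenex normal form of $WF(\alpha)$ is $(\Pi^1_1(\Pi^0_3))^-$ without the explicit quantifier count, whereas you sharpen this to $\Sigma^0_2$ inside $\forall X$; otherwise the arguments coincide.
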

\begin{proof}
The proof is the same for the ordinal notation system $OT(\vartheta)$ and $OT'(\vartheta)$.
\[WF(\alpha) = \forall X \left( \forall x (\forall y(y \prec x \rightarrow y \in X) \rightarrow x \in X) \rightarrow \forall x (x \in X) \right),\]
where  $\prec$ is $< \upharpoonright \alpha$. It is easy to see that the prenex normal form of the formula $WF(\alpha)$ is $(\Pi^1_1(\Pi^0_3))^-$, hence $Acc$ can be expressed by such a formula.
$M$ can be represented by the formula $\forall \xi \leq_{\mathbb{N}} \alpha ( \xi \in K(\alpha) \rightarrow \xi \in Acc)$. Because $\xi \in K(\alpha)$ is elementary recursive, both $M$ and $<_\Omega$ are also expressible by a $(\Pi^1_1(\Pi^0_3))^-$-formula.
 
\end{proof}


\begin{lemma}
\label{Proof of + in RCA_0^*}
$(RCA_0^*)$ $\alpha$, $\beta \in Acc \Longrightarrow \alpha + \beta \in Acc$.
\end{lemma}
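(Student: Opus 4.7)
The strategy is to reduce $WF(<\upharpoonright (\alpha+\beta))$ to the two hypotheses $WF(<\upharpoonright \alpha)$ and $WF(<\upharpoonright \beta)$ by splitting the initial segment below $\alpha+\beta$ into the part below $\alpha$ and the part of the form $\alpha+\delta$ with $\delta<\beta$. Concretely, I fix an arbitrary set $X$ which is progressive with respect to $<\upharpoonright(\alpha+\beta)$ and aim to conclude $\forall \gamma < \alpha+\beta\,(\gamma \in X)$.

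First, I would observe that since $\gamma < \alpha$ implies $\gamma < \alpha+\beta$, the set $X$ is in particular progressive with respect to $<\upharpoonright \alpha$. Applying $\alpha \in Acc$ (i.e.\ $WF(<\upharpoonright \alpha)$) to $X$ yields $\forall \gamma < \alpha\,(\gamma \in X)$. Next I would introduce the auxiliary set
\[
Y := \{\delta < \beta : \alpha + \delta \in X\}.
\]
This set exists in $RCA_0^*$ by $\Delta^0_1$-comprehension: membership in $Y$ is a bounded, quantifier-free condition in $\delta$, $\alpha$, and the set parameter $X$, since $+$ on the primitive recursive notation system $OT'(\vartheta)$ is elementary recursive.

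I would then verify that $Y$ is progressive with respect to $<\upharpoonright \beta$. Given $\delta < \beta$ with $\forall \epsilon < \delta\,(\epsilon \in Y)$, I have $\forall \epsilon < \delta\,(\alpha + \epsilon \in X)$. Combined with the first step, this gives $\forall \xi < \alpha + \delta\,(\xi \in X)$, using the standard fact on the notation system that every $\xi < \alpha + \delta$ is either below $\alpha$ or equal to $\alpha + \epsilon$ for some $\epsilon < \delta$. Progressiveness of $X$ at $\alpha+\delta$ then yields $\alpha + \delta \in X$, i.e.\ $\delta \in Y$. Invoking $\beta \in Acc$ on $Y$ gives $\forall \delta < \beta\,(\delta \in Y)$, hence $\forall \delta < \beta\,(\alpha + \delta \in X)$, and together with the first step this closes the argument: $\forall \xi < \alpha+\beta\,(\xi \in X)$.

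The proof contains no real obstacle; the only points requiring care are that the set $Y$ be admissible under the weak comprehension of $RCA_0^*$ (which is fine because its defining formula is $\Delta^0_0$ in $X$ and the primitive recursive coding of ordinal addition), and that the order-theoretic splitting of the predecessors of $\alpha+\beta$ be proved within the theory, which is straightforward from the normal-form properties of $OT'(\vartheta)$ recorded above. No schematic transfinite induction beyond what is built into $WF(\cdot)$ is used.
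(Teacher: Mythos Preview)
Your argument is correct and is exactly the standard decomposition one uses to establish closure of $Acc$ under addition; the paper's own proof simply reads ``Obvious.'' You have faithfully spelled out what the authors leave implicit, including the only delicate point in the weak base theory, namely that the auxiliary set $Y$ is available via $\Delta^0_1$-comprehension.
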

\begin{proof}
Obvious.
   \end{proof}





\begin{lemma}
\label{Proof of exponentiation in ACA_0}
$(ACA_0)$ $\alpha \in Acc \Longrightarrow \omega^\alpha \in Acc$. 
\end{lemma}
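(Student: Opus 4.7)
The goal is the classical Gentzen--Schütte jump: from the well-foundedness of $<\upharpoonright\alpha$ one wants to extract the well-foundedness of $<\upharpoonright\omega^\alpha$. Since we are working in $ACA_0$ with the notation system $OT(\vartheta)$, the operation $\omega^\alpha$ is available, and $\alpha<\Omega$ immediately gives $\omega^\alpha<\Omega$, so only the well-foundedness clause needs attention.

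The plan is as follows. Fix an arbitrary set $X$, and assume $Prog(<\upharpoonright\omega^\alpha,X)$; the aim is to deduce $\forall\gamma<\omega^\alpha\,(\gamma\in X)$. Using arithmetical comprehension (with $X$ and $\alpha$ as parameters) I would introduce the auxiliary set
\[
X' \;:=\; \bigl\{\beta<\alpha \;:\; \forall\gamma<\omega^\alpha\bigl(\,\forall\delta<\gamma(\delta\in X)\;\rightarrow\;\forall\delta<\gamma+\omega^\beta(\delta<\omega^\alpha\rightarrow\delta\in X)\bigr)\bigr\}.
\]
The strategy is then a standard two-step argument: first show that $X'$ is progressive with respect to $<\upharpoonright\alpha$, and second show that $\forall\beta<\alpha\,(\beta\in X')$ implies $\forall\gamma<\omega^\alpha\,(\gamma\in X)$.

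For the first step, fix $\beta<\alpha$ and assume $\forall\beta'<\beta\,(\beta'\in X')$. To check $\beta\in X'$, fix $\gamma<\omega^\alpha$ with $\forall\delta<\gamma\,(\delta\in X)$; the task is to obtain $\forall\delta<\gamma+\omega^\beta\,(\delta\in X)$ under the side condition $\delta<\omega^\alpha$. This is done by an auxiliary transfinite induction along $<\upharpoonright\omega^\beta$, which itself is proved by a side induction on the Cantor normal form: any ordinal below $\omega^\beta$ can be written as $\omega^{\beta_1}+\cdots+\omega^{\beta_k}$ with each $\beta_i<\beta$, so the induction hypothesis on the $\beta_i$ together with the progressiveness of $X$ (applied to successive partial sums $\gamma+\omega^{\beta_1}+\cdots+\omega^{\beta_j}$) produces membership in $X$ one step at a time. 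The Cantor normal form operations are available in $OT(\vartheta)$, and the whole construction stays arithmetical in $X$.

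For the second step, apply the hypothesis $\alpha\in Acc$ to the set $X'$ to obtain $\forall\beta<\alpha\,(\beta\in X')$. Now any $\gamma<\omega^\alpha$ can be written in Cantor normal form $\gamma=\omega^{\beta_1}+\cdots+\omega^{\beta_n}$ with $\alpha>\beta_1\ge\cdots\ge\beta_n$; starting from the empty prefix (which is trivially covered by $X$ vacuously) and applying the defining clause of $X'$ successively to $\beta_1,\ldots,\beta_n$, one obtains $\gamma\in X$. The expected main obstacle is not conceptual but bookkeeping: choosing the formula defining $X'$ so that the induction on Cantor normal form inside the progressiveness argument closes cleanly, and verifying that every quantifier and comprehension invoked remains within the means of $ACA_0$ and respects the primitive recursive coding given in Lemma~\ref{ONS for Howard-Bachmann}. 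Once these details are aligned, the conclusion $\omega^\alpha\in Acc$ follows.
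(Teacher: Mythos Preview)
Your proposal is correct and is precisely the classical Gentzen jump argument; the paper does not spell out a proof at all but simply cites Gentzen via Pohlers and Sch\"utte, so you have in fact supplied more detail than the paper does. One small remark on wording: in the progressiveness step for $X'$ you speak of an ``auxiliary transfinite induction along $<\upharpoonright\omega^\beta$'', but of course no well-foundedness of $\omega^\beta$ is available at that point---what actually happens (and what you immediately say next) is an ordinary arithmetical induction on the length of the Cantor normal form of $\delta-\gamma$, applying the inductive hypothesis $\beta_i\in X'$ at each summand; phrasing it as a transfinite induction is potentially confusing but your unpacking makes the intended argument clear.
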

\begin{proof}
A proof of this lemma goes back to Gentzen. See \cite{Pohlersboek1,Prooftheoryschutte}.
\end{proof}

\begin{definition}
Let $Prog_{\Omega}(X)$ be the formula
\[(\forall \alpha \in M)\left[ (\forall \beta <_{\Omega} \alpha) (\beta \in X ) \rightarrow \alpha \in X \right].\]
Let $Acc_{\Omega}$ be the set $\{\alpha \in M : \vartheta(\alpha) \in Acc\}$.
\end{definition}


\begin{lemma}\label{Prog in RCA_0*} $(RCA_0^*)$
$Prog_{\Omega}(Acc_{\Omega})$. 
\end{lemma}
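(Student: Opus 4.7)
The strategy is the standard Gentzen-style reflection argument for the $\vartheta$-function, following the template of \cite{rathjenweiermann} but adapted to the notation system $OT'(\vartheta)$ used in $RCA_0^*$.

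Fix $\alpha \in M$ and the hypothesis $(H)$: $(\forall \beta <_\Omega \alpha)(\vartheta(\beta) \in Acc)$. To establish $\vartheta(\alpha) \in Acc$, fix an arbitrary set $X$ with $Prog(<\upharpoonright\vartheta(\alpha),X)$; the goal is to show that every $\xi < \vartheta(\alpha)$ lies in $X$. The first move exploits $\alpha \in M$: it yields $k(\alpha) \in Acc$, and since $k(\alpha) < \vartheta(\alpha)$ the set $X \cap \{\eta\leq k(\alpha)\}$ is still progressive, so unfolding the universal quantifier in the definition of $Acc$ with this set gives that every $\eta \leq k(\alpha)$ belongs to $X$. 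In particular this handles Case~(A) of Lemma~\ref{main property theta-function}, where $\vartheta(\beta) < \vartheta(\alpha)$ comes from $\alpha < \beta \wedge \vartheta(\beta) \leq k(\alpha)$.

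The substantive work is Case~(B): $\beta < \alpha$ with $k(\beta) < \vartheta(\alpha)$. Here I verify $\beta \in M$ so that $\beta <_\Omega \alpha$ and hypothesis $(H)$ delivers $\vartheta(\beta) \in Acc$; then, repeating the argument of the previous paragraph with $\vartheta(\beta)$ in place of $k(\alpha)$, every $\eta \leq \vartheta(\beta)$ lies in $X$. The verification that $\beta \in M$ amounts to showing $K(\beta) \subseteq Acc$; each $\eta \in K(\beta)$ satisfies $\eta \leq k(\beta) < \vartheta(\alpha)$, and $\eta \in Acc$ is then obtained by a meta-level descent along the primitive recursive measure $G'_\vartheta$ from Lemma~\ref{G'_theta on OT'(theta)}, reducing well-foundedness of $<\upharpoonright \eta$ to instances of $(H)$ at smaller theta-arguments and to closure of $Acc$ under $+$ (Lemma~\ref{Proof of + in RCA_0^*}).

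Finally, every $\xi < \vartheta(\alpha)$ possesses an $OT'(\vartheta)$-normal form $\xi = \vartheta(\beta_1) + \dots + \vartheta(\beta_m)$ whose summands are in $X$ by the steps above; a direct repetition of the argument underlying Lemma~\ref{Proof of + in RCA_0^*}, carried out for the specific set $X$ by $\Sigma^0_1$-induction on $m$ together with progressivity of $X$, then yields $\xi \in X$. The principal obstacle is precisely the Case~(B) verification that $\beta \in M$: since $M$-membership is a genuinely $\Pi^1_1$-statement, it must be handled by explicitly constructing, for each $\eta \in K(\beta)$, a concrete progressive set witnessing $WF(<\upharpoonright \eta)$ from the data $k(\alpha) \in Acc$ plus $(H)$. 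This is the delicate point where the finitary nature of $OT'(\vartheta)$ and the restricted induction available in $RCA_0^*$ must be balanced, and it is what forces the bookkeeping via $G'_\vartheta$ rather than a direct transfinite recursion.
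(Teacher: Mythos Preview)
Your proposal contains the right core idea---descent along the complexity measure $G'_\vartheta$ together with closure of $Acc$ under $+$ and the hypothesis $(H)$---but it wraps this idea in an unnecessary layer. The paper's proof is considerably more direct: it never unfolds the definition of $Acc$ by fixing a progressive set $X$. Instead it proves, by a single induction on the natural number $G'_\vartheta\xi$, that every $\xi<\vartheta(\alpha)$ lies in $Acc$; from this, $\vartheta(\alpha)\in Acc$ follows immediately. The induction has just two nontrivial cases: if $\xi\notin P$ then $\xi=_{NF}\xi_1+\dots+\xi_n$ and the induction hypothesis plus Lemma~\ref{Proof of + in RCA_0^*} finishes; if $\xi=\vartheta(\xi')$ then either $\vartheta(\xi')\leq k(\alpha)\in Acc$, or $\xi'<\alpha$ and $k(\xi')<\vartheta(\alpha)$, in which case $G'_\vartheta(k(\xi'))<G'_\vartheta(\xi)$ gives $k(\xi')\in Acc$ by the induction hypothesis, hence $\xi'\in M$, hence $\xi'<_\Omega\alpha$, and $(H)$ yields $\vartheta(\xi')\in Acc$.

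Notice that what you call ``the principal obstacle''---verifying $\beta\in M$ via a descent on $G'_\vartheta$---is precisely the paper's entire argument, so your Case~(B) subroutine already subsumes everything else. The detour through a fixed progressive $X$ and the separate treatment of Case~(A) and the final sum-closure step are all redundant once you have $(\forall\xi<\vartheta(\alpha))(\xi\in Acc)$. Also, your phrase ``meta-level descent'' is potentially misleading: $Prog_\Omega(Acc_\Omega)$ is a single formula, so the induction on $G'_\vartheta$ must be carried out \emph{internally} as ordinary induction on $\mathbb{N}$, not as an external schema; the paper simply writes ``by induction on $G'_\vartheta\xi$'' and treats it as unproblematic.
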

\begin{proof}
We work in $OT'(\vartheta)$.
Assume $\alpha \in M$ and $(\forall \beta<_\Omega \alpha)( \vartheta(\beta) \in Acc)$. We want to proof that $\vartheta(\alpha) \in Acc$. We show that $(\forall \xi < \vartheta(\alpha)) (\xi \in Acc)$ by induction on $G'_\vartheta \xi$, from which the theorem follows. If $\xi=0$, then this trivially holds. So assume $\xi > 0$.
\medskip

\textit{a) Assume $\xi \notin P$}\\
Because $\xi< \Omega$, we have $\xi=_{NF} \xi_1 + \dots  + \xi_n > \xi_1 \geq \dots \geq \xi_n$ ($n\geq 2$). The induction hypothesis yields $\xi_i \in Acc$. Hence from Lemma \ref{Proof of + in RCA_0^*}, we obtain $\xi \in Acc$.
\medskip

\textit{b) Assume $\xi = \vartheta(\xi')$}\\
From $\vartheta(\xi') < \vartheta(\alpha)$, we obtain either $\xi' < \alpha$ and $k(\xi') < \vartheta(\alpha)$ or $\vartheta(\xi') \leq k(\alpha)$. 
In the former case, $G'_\vartheta(k(\xi')) \leq G'_\vartheta(\xi') < G'_\vartheta(\xi)$ and the induction hypothesis implies $k(\xi') \in Acc$, hence $K(\xi') \subseteq Acc$. So $\xi' \in M$, from which it follows $\xi' <_\Omega \alpha$, hence $\vartheta(\xi') \in Acc$. 
In the latter case, we know that $k(\alpha) \in Acc$, because $\alpha \in M$. Therefore, $\vartheta(\xi') \in Acc$.
   \end{proof}

We actually do not need the following lemma, because we already know the proof-theoretical ordinal of $ACA_0 + (\Pi^1_1$--\,$CA_0)^-$. However, just for  the completeness, we mention it here.
\begin{lemma}\label{Prog in ACA_0} $(ACA_0)$
$Prog_{\Omega}(Acc_{\Omega})$. 
\end{lemma}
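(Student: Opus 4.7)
The plan is to replicate the argument used for Lemma~\ref{Prog in RCA_0*}, but now working in the richer ordinal notation system $OT(\vartheta)$. Fix $\alpha \in M$ together with the hypothesis $(\forall \beta <_\Omega \alpha)(\vartheta(\beta) \in Acc)$; the aim is $\vartheta(\alpha) \in Acc$, which I would obtain by proving $(\forall \xi < \vartheta(\alpha))(\xi \in Acc)$ by induction on $G_\vartheta \xi$. The new feature of $OT(\vartheta)$ compared to $OT'(\vartheta)$ is that ordinals below $\Omega$ may now appear in Cantor normal form with $\omega$-exponentiation, so the induction will need to treat one extra shape of term.

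For $\xi = 0$ the statement is trivial. If $\xi \notin P$ then, since $\xi < \Omega$, the notation puts $\xi$ in the form $\omega^{\delta_1} + \dots + \omega^{\delta_m}$ with $m \geq 2$, and each $\delta_i$ satisfies $\delta_i < \omega^{\delta_i} \leq \xi < \vartheta(\alpha)$ together with $G_\vartheta(\delta_i) < G_\vartheta(\xi)$. The induction hypothesis therefore gives $\delta_i \in Acc$; Gentzen's Lemma~\ref{Proof of exponentiation in ACA_0} (which is exactly where $ACA_0$-strength is used) upgrades this to $\omega^{\delta_i} \in Acc$; and finally Lemma~\ref{Proof of + in RCA_0^*} collapses the sum to give $\xi \in Acc$. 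If instead $\xi = \vartheta(\xi')$, I would run the argument of case (b) of Lemma~\ref{Prog in RCA_0*} verbatim: Lemma~\ref{main property theta-function} splits $\vartheta(\xi') < \vartheta(\alpha)$ into either $\xi' < \alpha$ with $k(\xi') < \vartheta(\alpha)$, or $\vartheta(\xi') \leq k(\alpha)$. In the first subcase, Lemma~\ref{G_theta on OT(theta)} lets me feed $k(\xi')$ into the induction hypothesis to obtain $K(\xi') \subseteq Acc$, whence $\xi' \in M$, $\xi' <_\Omega \alpha$, and $\vartheta(\xi') \in Acc$ by the outer hypothesis; in the second subcase, $\alpha \in M$ gives $k(\alpha) \in Acc$ immediately, and downward closure of well-foundedness delivers $\vartheta(\xi') \in Acc$.

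The only genuinely new point compared to Lemma~\ref{Prog in RCA_0*} is the case $\xi \notin P$: in the $OT'(\vartheta)$-setting the summands of a non-principal $\xi < \Omega$ are already $\vartheta$-values, so closure under addition alone suffices, whereas the $\omega$-exponentiation in $OT(\vartheta)$ forces the use of Gentzen's closure of $Acc$ under $\alpha \mapsto \omega^\alpha$. This is precisely the step that fails in $RCA_0^*$ and explains why the present lemma sits at the $ACA_0$ level rather than at the $RCA_0^*$ level.
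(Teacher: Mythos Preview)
Your proposal is correct and matches the paper's own approach exactly: the paper's proof simply says to rerun the argument of Lemma~\ref{Prog in RCA_0*} in $OT(\vartheta)$, with Lemma~\ref{Proof of exponentiation in ACA_0} as the one additional ingredient, and you have spelled this out faithfully. (One minor slip: the chain $\delta_i < \omega^{\delta_i} \leq \xi$ need not hold when $\delta_i$ is an $\varepsilon$-number, but the normal-form condition in clause 2(b) of the definition of $OT(\vartheta)$ already gives $\delta_i < \xi$ directly, so the argument is unaffected.)
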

\begin{proof}
The proof uses $OT(\vartheta)$ and follows the same procedure as Lemma \ref{Prog in RCA_0*}. The only difference is the usage of Lemma \ref{Proof of exponentiation in ACA_0}.
   \end{proof}

\begin{lemma}\label{Prog implies WF in RCA_0}
Let $A(a)$ be a $(\Pi^1_1(\Pi^0_3))^-$-formula. Define $A_k$ as 
\[\forall \alpha [(\forall \beta <_\Omega \alpha) A(\beta) \rightarrow (\forall \beta <_\Omega \alpha + \Omega^k) A(\beta)].\]
Then $RCA_0 + (\Pi^1_1(\Pi^0_3)$-- $CA_0)^-$ proves $Prog_\Omega(\{\xi: A(\xi)\}) \rightarrow A_k$.
\end{lemma}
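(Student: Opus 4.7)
The plan is external induction on $k\in\mathbb{N}$: for each fixed $k$, we construct a derivation in $RCA_0 + (\Pi^1_1(\Pi^0_3)\text{--}\,CA_0)^-$ of the implication $Prog_\Omega(\{\xi:A(\xi)\}) \to A_k$.

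For the base case $k=0$, the statement $A_0$ reads $\forall \alpha[(\forall \beta <_\Omega \alpha)A(\beta)\to(\forall \beta <_\Omega \alpha+1)A(\beta)]$. Given the antecedent and $\beta<_\Omega \alpha+1$, either $\beta<_\Omega\alpha$ (and we are done by the assumption) or $\beta=\alpha$; in the latter case $\alpha\in M$, so $Prog_\Omega(\{\xi:A(\xi)\})$ applied at $\alpha$ yields $A(\alpha)=A(\beta)$.

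For the inductive step $k\to k+1$, assume the lemma has been established at stage $k$, so in particular $A_k$ is at our disposal. Fix $\alpha$ and assume $H:=(\forall \beta <_\Omega \alpha)A(\beta)$; the idea is to iterate $A_k$ transfinitely, adding $\Omega^k$ at each stage, thereby reaching $\Omega^{k+1}$. Informally set $E(\delta):=(\forall \beta<_\Omega \alpha+\Omega^k\cdot\delta)A(\beta)$. Then $E(0)\equiv H$; the successor step $E(\delta)\to E(\delta+1)$ is exactly $A_k$ applied with parameter $\alpha+\Omega^k\cdot\delta$; and at limits $\lambda\in M$, $E(\lambda)$ follows from $E(\delta)$ for all $\delta<_\Omega\lambda$, since any $\beta<\alpha+\Omega^k\cdot\lambda$ lies strictly below $\alpha+\Omega^k\cdot\delta$ for some $\delta<_\Omega\lambda$. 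After suitably massaging $E$ into a $(\Pi^1_1(\Pi^0_3))^-$-form so that $(\Pi^1_1(\Pi^0_3)\text{--}\,CA_0)^-$ can form $\{\delta:E(\delta)\}$, the set-induction / well-foundedness axiom applied on an initial segment of $Acc$ yields $E(\delta)$ for every $\delta\in Acc$.

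From $E(\delta)$ for all $\delta\in Acc$ we derive $(\forall \beta<_\Omega \alpha+\Omega^{k+1})A(\beta)$ as follows. Take $\beta\in M$ with $\beta<\alpha+\Omega^{k+1}$. If $\beta\le\alpha$ we use $H$ (and progressiveness when $\beta=\alpha$). Otherwise $\beta=\alpha+\gamma$ with $0<\gamma<\Omega^{k+1}$, and the Cantor normal form of $\gamma$ in $OT'(\vartheta)$ yields a leading $\Omega^k$-coefficient $\gamma_k<\Omega$. A short case analysis (depending on the largest exponent $\le k$ appearing in $\alpha$'s normal form) shows that the corresponding $\Omega^k$-coefficient of $\beta$ dominates $\gamma_k$ and lies in $K(\beta)\subseteq Acc$; by downward closure of $Acc$ we then have $\gamma_k\in Acc$, whence $\delta:=\gamma_k+1\in Acc$. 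Since $\beta<\alpha+\Omega^k\cdot\delta$, the instance $E(\delta)$ gives $A(\beta)$, as required.

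The main obstacle is formal rather than conceptual: the naive predicate $E(\delta)$ is not $(\Pi^1_1(\Pi^0_3))^-$, because the side condition $\beta\in M$ is $\Pi^1_1$ and occurs in the antecedent of an implication whose consequent $A(\beta)$ is also $\Pi^1_1$, so the defining formula of $\{\delta:E(\delta)\}$ a priori carries a $\Sigma^1_1$-alternation and is out of reach of $(\Pi^1_1(\Pi^0_3)\text{--}\,CA_0)^-$. The refinements announced at the start of this subsection (following \cite{rathjenweiermann}) consist precisely in reshuffling the quantifiers so that this alternation collapses into a single outermost $\forall X$ over a $\Pi^0_3$ matrix, at which point the comprehension schema suffices to form the set in question and the transfinite iteration of $A_k$ along $Acc$ runs without further issue.
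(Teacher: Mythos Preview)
Your overall architecture matches the paper's: external induction on $k$, with the inductive step driven by transfinite iteration of $A_k$ along an initial segment of $Acc$ via the auxiliary predicate $E(\delta)\equiv(\forall\beta<_\Omega\alpha+\Omega^k\cdot\delta)A(\beta)$ (the paper calls it $B(\zeta)$). The progressiveness argument (zero, successor via $A_k$, limit by cofinality) is identical, and your extraction of a suitable $\delta\in Acc$ from a given $\beta$ is a more laborious version of the paper's one-liner $\xi=k(\beta)+1$.

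Where your proposal has a genuine gap is in the handling of the complexity obstacle. You propose to ``reshuffle'' $E(\delta)$ into $(\Pi^1_1(\Pi^0_3))^-$ form and then apply the parameter-free comprehension to obtain the set $\{\delta:E(\delta)\}$, after which set-$TI$ (i.e.\ $WF$) would finish. But this reshuffling does not go through: writing out $E(\delta)$ with the defining formulas of $M$ and $A$ inlined gives
\[
\forall\beta\,\forall Y\,\bigl[\exists Z\,\neg D(\beta,Z)\ \vee\ \beta\geq\alpha+\Omega^k\delta\ \vee\ C(\beta,Y)\bigr],
\]
and pulling the $\exists Z$ outward yields a genuine $\Pi^1_2$ prefix, not $\Pi^1_1$. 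No prenex manipulation collapses the $\Sigma^1_1$ contribution of $\neg(\beta\in M)$ against the $\Pi^1_1$ consequent $A(\beta)$.

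The paper resolves this differently and more simply. It first uses $(\Pi^1_1(\Pi^0_3)\text{--}\,CA_0)^-$ once, to turn $A$ (and implicitly $M$, $<_\Omega$) into actual \emph{sets}; this is legitimate because their defining formulas are parameter-free. With $A$ and $M$ now set parameters, $B(\zeta)$ becomes a bona fide $\Pi^0_1$ formula. Crucially, the paper does \emph{not} then try to form $\{\zeta:B(\zeta)\}$ (which $RCA_0$ cannot do, and the parameter-free comprehension no longer applies since $B$ now carries set parameters). Instead it invokes the known fact that $RCA_0\vdash\forall X(WO(X)\to TI_X(\psi))$ for $\Pi^0_1$ formulas $\psi$, applied along the well-ordering $\xi+1$ with $\xi\in Acc$. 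That is the step your proposal is missing.
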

\begin{proof}
We will prove that by outer induction on $k$. First note that $A(a)$ can be expressed by a set in $RCA_0 + (\Pi^1_1(\Pi^0_3)$-- $CA_0)^-$. Assume that $Prog_\Omega(\{\xi: A(\xi)\})$ and pick an arbitrary $\alpha$ such that $(\forall \beta <_\Omega \alpha) A(\beta)$. If $\alpha \notin M$, the assertion trivially follows. Assume $\alpha \in M$. 
If $k=0$, the proofs follows easily from $Prog_\Omega(\{\xi: A(\xi)\})$. Assume $k=l+1$ en suppose $(\forall \beta <_\Omega \alpha) A(\beta) $. We want to prove that $(\forall \beta <_\Omega \alpha + \Omega^{l+1}) A(\beta)$. Take an arbitrary $\beta <_\Omega \alpha + \Omega^{l+1}$. $RCA_0$ proves that there exists a $\xi \in Acc$ such that $\beta <_\Omega \alpha + \Omega^{l}\xi$ (by induction on the construction of $\beta$ in $OT(\vartheta)$, one can show that one can take $\xi = k(\beta) +1$. Let $B(\zeta)$ be
\[(\forall \beta <_\Omega \alpha + \Omega^l \zeta) A(\beta).\]
$B(\zeta)$ is a $\Pi^0_1$-formula in $A$. It is known (Lemma 6 in \cite{RMandrankfunctionsfordirectedgraphs}) that $RCA_0 \vdash \forall X (WO(X) \rightarrow TI_X(\psi))$ for all $\Pi^0_1$-formulas $\psi$. Because $\xi \in Acc$, we have that $\xi+1$ is well-ordered, hence we know that $TI_{\xi+1}(B)$ is true. This means
\[\forall x \leq \xi [\forall y \leq \xi (y<x \rightarrow B(y) ) \rightarrow B(x)] \rightarrow \forall x \leq \xi B(x).\]
The theorem follows from $B(\xi)$, hence we only have to prove the progressiveness of $B$ along $\xi+1$. 

\medskip

Assume that $x \leq \xi$. Then $x \in Acc$. If $x=0$, then $B(0)$ follows from the assumption. Assume that $x$ is a limit. If $\beta <_\Omega \alpha + \Omega^l x $, then there exists a $y < x$ such that $\beta <_\Omega \alpha + \Omega^l y$. Because $B(y)$ is valid, one obtains $A(\beta)$.
Assume that $x = x'+1 \in Acc$. From $x' < x$, one obtains $(\forall \beta <_\Omega \alpha + \Omega^l x') A(\beta)$. Because $A_l$ is valid, we get $(\forall \beta <_\Omega \alpha + \Omega^l x' + \Omega^l) A(\beta)$, hence $(\forall \beta <_\Omega \alpha + \Omega^l (x' +1)) A(\beta)= B(x)$.

   \end{proof}


\begin{theorem}\label{lower bound RCA_0 + Pi11-CA}
$\vert RCA_0 + (\Pi^1_1(\Pi^0_3)$-- $CA_0)^- \vert \geq \vartheta(\Omega^\omega)$.
\end{theorem}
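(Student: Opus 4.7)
My plan is to apply Lemma~\ref{Prog in RCA_0*} and Lemma~\ref{Prog implies WF in RCA_0} to the formula $Acc_{\Omega}$ in order to exhibit, for every external $m$, a theorem of the form ``$WF(\vartheta(\Omega^m))$'' in $RCA_0+(\Pi^1_1(\Pi^0_3)\text{--}\,CA_0)^{-}$. Taking the supremum over $m$ then yields the desired lower bound.

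The first point to check is that the formula $Acc_{\Omega}(\xi)\equiv \xi\in M\wedge \vartheta(\xi)\in Acc$ is itself $(\Pi^1_1(\Pi^0_3))^{-}$. Both $M$ and $Acc$ have this shape, and a conjunction $\forall X\,B_1(X)\wedge \forall Y\,B_2(Y)$ is equivalent to $\forall Z\,B(Z)$ with $Z$ coding a pair $(X,Y)$ and $B$ still $\Pi^0_3$; no free set parameters appear, so the combined formula remains in $(\Pi^1_1(\Pi^0_3))^{-}$. The corresponding instance of $(\Pi^1_1(\Pi^0_3)\text{--}\,CA_0)^{-}$ therefore produces a set witnessing $Acc_{\Omega}$, which is precisely what is needed to invoke Lemma~\ref{Prog implies WF in RCA_0}.

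By Lemma~\ref{Prog in RCA_0*}, the theory proves $Prog_{\Omega}(Acc_{\Omega})$, and Lemma~\ref{Prog implies WF in RCA_0} then yields, for each external $k$,
\[
\forall\alpha\bigl[(\forall\beta<_{\Omega}\alpha)\,Acc_{\Omega}(\beta)\rightarrow (\forall\beta<_{\Omega}\alpha+\Omega^{k})\,Acc_{\Omega}(\beta)\bigr].
\]
Instantiating $\alpha=0$ makes the premise vacuous, so $(\forall\beta<_{\Omega}\Omega^{k})\,Acc_{\Omega}(\beta)$ is provable. For every standard $m$, the ordinal $\Omega^{m}$ satisfies $K(\Omega^{m})\subseteq\{0,1,m\}$, a finite set of naturals, each of which is trivially in $Acc$ (the relevant initial segments of $<_{OT'(\vartheta)}$ are finite, hence well-founded). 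Hence $\Omega^{m}\in M$, and taking $k=m+1$ above we obtain $Acc_{\Omega}(\Omega^{m})$, i.e.\ $\vartheta(\Omega^{m})\in Acc$, which is exactly $WF(<\!\upharpoonright\!\vartheta(\Omega^{m}))$.

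Since the theory therefore proves $WF(\vartheta(\Omega^{m}))$ for every standard $m$, and since every element of $OT'(\vartheta)\cap\Omega$ strictly below $\vartheta(\Omega^{\omega})$ lies below some $\vartheta(\Omega^{m})$, the proof-theoretic ordinal is at least $\sup_{m}\vartheta(\Omega^{m})=\vartheta(\Omega^{\omega})$. The only delicate points are the complexity bookkeeping for $Acc_{\Omega}$ and the cofinality computation $\sup_{m}\vartheta(\Omega^{m})=\vartheta(\Omega^{\omega})$ inside the chosen notation system; both are routine consequences of Lemma~\ref{main property theta-function} and the inductive definition of $OT'(\vartheta)$.
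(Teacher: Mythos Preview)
Your proof is correct and follows essentially the same approach as the paper: combine Lemma~\ref{Prog in RCA_0*} (progressiveness of $Acc_\Omega$), Lemma~\ref{Prog implies WF in RCA_0} (the jump lemma for $(\Pi^1_1(\Pi^0_3))^-$-formulas), and the observation that $Acc_\Omega$ has the right syntactic complexity. The paper's own proof is a one-line reference to these three ingredients; you have spelled out the instantiation $\alpha=0$, the verification that $\Omega^m\in M$, and the cofinality $\sup_m\vartheta(\Omega^m)=\vartheta(\Omega^\omega)$, which the paper leaves implicit.
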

\begin{proof}
This follows from Lemmas \ref{Prog in RCA_0*}, \ref{Prog implies WF in RCA_0} and the fact that $Acc_\Omega$ is expressible by a $(\Pi^1_1(\Pi^0_3))^-$-formula.
   \end{proof}


\begin{lemma}\label{Prog implies WF in RCA_0^*}
Let $A(a)$ be a $(\Pi^1_1(\Pi^0_3))^-$-formula. Define $A_k$ as 
\[(\forall \beta <_\Omega \Omega  \cdot k) A(\beta).\]
Then $RCA_0^* + (\Pi^1_1(\Pi^0_3)$-- $CA_0)^-$ proves $Prog_\Omega(\{\xi: A(\xi)\}) \rightarrow A_k$.
\end{lemma}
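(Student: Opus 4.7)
The plan is to argue by outer induction on $k$ at the meta-level, producing for each $k$ a separate derivation inside $RCA_0^* + (\Pi_1^1(\Pi^0_3)\text{--}\,CA_0)^-$. The base case $k=0$ is vacuous, since $A_0$ reads $(\forall \beta <_\Omega 0)\,A(\beta)$. For the step from $l$ to $l+1$, I would first use $(\Pi_1^1(\Pi^0_3)\text{--}\,CA_0)^-$---legitimate because $A$ has no free set parameters---to form $X_A := \{n : A(n)\}$, and then apply $\Delta^0_1$-comprehension to introduce the auxiliary set
\[
\mathcal{Z} := \bigl\{\gamma' < \Omega \,:\, (\forall \gamma'' \leq \gamma')\ \Omega \cdot l + \gamma'' \in X_A\bigr\}.
\]
Now pick $\beta <_\Omega \Omega \cdot (l+1)$. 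If $\beta <_\Omega \Omega \cdot l$, then $A(\beta)$ is immediate from the meta-level IH $A_l$; otherwise $\beta = \Omega \cdot l + \gamma$ with $\gamma < \Omega$, and a short evaluation of $K(\beta)$ combined with $\beta \in M$ forces $\gamma \in Acc$.

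The heart of the step case is to verify that $\mathcal{Z}$ is progressive along $<\!\upharpoonright\!(\gamma+1)$. Fix $\gamma' < \gamma + 1$ and suppose $(\forall \gamma'' < \gamma')\,\gamma'' \in \mathcal{Z}$, which unpacks to $(\forall \gamma''' < \gamma')\,A(\Omega \cdot l + \gamma''')$. Since $\gamma' \leq \gamma \in Acc$ gives $\gamma' \in Acc$ and hence $\Omega \cdot l + \gamma' \in M$, I would apply $Prog_\Omega(\{\xi : A(\xi)\})$ at $\alpha := \Omega \cdot l + \gamma'$: every $\delta <_\Omega \alpha$ either satisfies $\delta <_\Omega \Omega \cdot l$ (handled by $A_l$) or has the shape $\Omega \cdot l + \delta'$ with $\delta' < \gamma'$ (handled by the standing hypothesis). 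Thus $A(\Omega \cdot l + \gamma')$ follows, so $\gamma' \in \mathcal{Z}$. Once progressiveness is secured, I would invoke the auxiliary fact $\gamma \in Acc \Rightarrow \gamma + 1 \in Acc$---a routine $RCA_0^*$-argument that, given a $<\!\upharpoonright\!(\gamma+1)$-progressive set $X$, first uses $WF(<\!\upharpoonright\!\gamma)$ to get $\alpha \in X$ for all $\alpha < \gamma$ and then applies progressiveness at $\gamma$ itself---and apply $WF(<\!\upharpoonright\!(\gamma+1))$ directly to $\mathcal{Z}$ to conclude $\gamma \in \mathcal{Z}$, which delivers $A(\Omega \cdot l + \gamma) = A(\beta)$.

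The main obstacle I expect is confirming that each ingredient really survives the drop from $\Sigma^0_1$- to $\Sigma^0_0$-induction. The decisive design choice is that no formula-transfinite-induction is invoked: whereas the $RCA_0$-proof of Lemma \ref{Prog implies WF in RCA_0} needs $\Pi^0_1$-$TI$ along well-orderings (hence the external lemma from \cite{RMandrankfunctionsfordirectedgraphs}), the plan above replaces the $\Pi^0_1$-formula $B(\zeta)$ by the honest $\Delta^0_1$-set $\mathcal{Z}$. This substitution is only available because $A_{l+1}$ has no outer $\forall\alpha$ quantifier, so the specific value $\Omega \cdot l$ can be hardwired into the definition of $\mathcal{Z}$; well-foundedness is then applied to an already-existing set, and no induction beyond $\Sigma^0_0$-induction together with $\Delta^0_1$-comprehension is called upon.
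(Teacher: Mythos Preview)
Your approach is essentially the paper's own: outer meta-induction on $k$, split $\beta$ according to whether it lies below $\Omega\cdot l$, form the set $X_A$ via $(\Pi^1_1(\Pi^0_3)\text{--}\,CA_0)^-$, and then reduce the step case to transfinite induction for a set along the well-founded segment determined by $\gamma\in Acc$. The paper phrases this last step as ``prove $B(\zeta):=A(\Omega\cdot l+\zeta)$ by induction on $\zeta$'' and handles $\zeta=0$, $\zeta$ limit, $\zeta=\zeta'+1$ separately, while you package it as progressiveness of a set $\mathcal{Z}$ followed by one application of $WF(<\!\upharpoonright\!(\gamma+1))$; these are the same argument.

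One technical slip: your $\mathcal{Z}=\{\gamma'<\Omega:(\forall\gamma''\leq\gamma')\,\Omega\cdot l+\gamma''\in X_A\}$ carries an \emph{unbounded} universal number quantifier (the ordinal inequality $\gamma''\leq\gamma'$ does not bound the numerical code of $\gamma''$), so it is $\Pi^0_1$ in $X_A$, not visibly $\Delta^0_1$, and $\Delta^0_1$-comprehension alone does not produce it in $RCA_0^*$. The downward closure is unnecessary anyway: replace $\mathcal{Z}$ by the $\Delta^0_0(X_A)$ set $\mathcal{Z}'=\{\gamma':\Omega\cdot l+\gamma'\in X_A\}$. Your progressiveness argument goes through verbatim for $\mathcal{Z}'$ (the hypothesis $(\forall\gamma''<\gamma')\,\gamma''\in\mathcal{Z}'$ already unpacks to exactly what you need), and this is precisely the set the paper implicitly uses when it calls $B(\zeta)$ a ``$\Pi^0_0$-formula in $A$''. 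With that adjustment your proof is correct and matches the paper.
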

\begin{proof}
We will prove that by outer induction on $k$. First note that $A(a)$ can be expressed by a set in $RCA_0^* + (\Pi^1_1(\Pi^0_3)$-- $CA_0)^-$. It is easy to see that the case $k=0$ holds. Assume $k=l+1$ and $Prog_\Omega(\{\xi: A(\xi)\})$. Then we know $(\forall \beta <_\Omega \Omega  \cdot l) A(\beta)$. Pick an arbitrary $\beta <_\Omega \Omega \cdot (l+1)$. If $\beta < \Omega \cdot l$, we obtain $A(\beta)$. Hence assume that $\beta \geq \Omega \cdot l$. There e\-xists a $\xi < \Omega$ such that $\beta = \Omega \cdot l + \xi$. Let $B(\zeta)$ be $A( \Omega \cdot l + \zeta)$. $B(\zeta)$ is a $\Pi^0_0$-formula in $A$. We prove by induction on $\zeta$ that $(\forall \zeta \in Acc) B(\zeta)$ is true. From this, the theorem follows.\\
If $\zeta=0$, then $B(\zeta)$ is true because $(\forall \beta <_\Omega \Omega  \cdot l) A(\beta)$ and $Prog_\Omega(\{\xi: A(\xi)\})$ imply $A( \Omega \cdot l )$. Assume $\zeta \in Acc$ is a limit and assume $(\forall \zeta' <_\Omega \zeta) B(\zeta)$. From $Prog_\Omega(\{\xi: A(\xi)\})$, we obtain $B(\zeta)$. Let $\zeta = \zeta'+1 \in Acc$. Then $B(\zeta)$ follows from $B(\zeta')$ and $Prog_\Omega(\{\xi: A(\xi)\})$.
%
%
%
   \end{proof}

\begin{theorem}\label{lower bound RCA_0^* + Pi11-CA}
$\vert RCA_0^* + (\Pi^1_1(\Pi^0_3)$--\,$CA_0)^- \vert \geq \vartheta(\Omega \cdot \omega) = \varphi \omega 0$.
\end{theorem}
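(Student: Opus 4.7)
The plan is to mirror the argument for Theorem~\ref{lower bound RCA_0 + Pi11-CA}, but substituting the weaker Lemma~\ref{Prog implies WF in RCA_0^*} for Lemma~\ref{Prog implies WF in RCA_0}. The two essential ingredients are already in place: the predicate $Acc_\Omega$ is expressible by a $(\Pi^1_1(\Pi^0_3))^-$-formula, and $RCA_0^*$ itself proves $Prog_\Omega(Acc_\Omega)$ by Lemma~\ref{Prog in RCA_0*}, hence so does the stronger theory under consideration.

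First, I would apply Lemma~\ref{Prog implies WF in RCA_0^*} with the $(\Pi^1_1(\Pi^0_3))^-$-formula $A(\cdot)$ chosen to define $Acc_\Omega$. For every meta-natural number $k$, this yields
\[
RCA_0^* + (\Pi^1_1(\Pi^0_3)\text{--}\,CA_0)^- \vdash Prog_\Omega(Acc_\Omega) \to (\forall \beta <_\Omega \Omega \cdot k)\, Acc_\Omega(\beta).
\]
Detaching $Prog_\Omega(Acc_\Omega)$ via Lemma~\ref{Prog in RCA_0*} gives, for every such $k$,
\[
RCA_0^* + (\Pi^1_1(\Pi^0_3)\text{--}\,CA_0)^- \vdash (\forall \beta <_\Omega \Omega \cdot k)\, \vartheta(\beta) \in Acc.
\]

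Next, I would observe that the ordinal $\Omega \cdot k$ itself lies in $M$ for every meta-natural $k$: its only non-trivial coefficient is the natural number $k$, and natural numbers are provably elements of $Acc$. Hence $\Omega \cdot k <_\Omega \Omega \cdot (k+1)$, and instantiating the conclusion above at $k+1$ with $\beta := \Omega \cdot k$ produces
\[
RCA_0^* + (\Pi^1_1(\Pi^0_3)\text{--}\,CA_0)^- \vdash WF(<\upharpoonright \vartheta(\Omega \cdot k))
\]
for every meta-natural $k$. Passing to the supremum in the meta-theory then yields
\[
|RCA_0^* + (\Pi^1_1(\Pi^0_3)\text{--}\,CA_0)^-| \geq \sup_{k<\omega}\vartheta(\Omega \cdot k) = \vartheta(\Omega \cdot \omega),
\]
using the standard continuity of $\vartheta$ at countable limits whose coefficient-sets remain bounded (here $k(\Omega \cdot n) = n$), together with the classical identity $\vartheta(\Omega \cdot \omega) = \varphi\omega 0$ from the Veblen/$\vartheta$ correspondence.

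No step involves a substantive obstacle; the argument is essentially a transcription of the proof of Theorem~\ref{lower bound RCA_0 + Pi11-CA}. The only delicate point worth highlighting is that Lemma~\ref{Prog implies WF in RCA_0^*} is established by an outer meta-induction on $k$, so it delivers a \emph{schema} of internal provability statements rather than a single internally quantified one. It is precisely this schema, and not a single internal formula, that lets us pass to the supremum over $k$ in the final step and thereby reach $\vartheta(\Omega \cdot \omega)$.
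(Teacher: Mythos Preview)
Your proposal is correct and follows exactly the same approach as the paper: it combines Lemma~\ref{Prog in RCA_0*}, Lemma~\ref{Prog implies WF in RCA_0^*}, and the $(\Pi^1_1(\Pi^0_3))^-$-definability of $Acc_\Omega$, then passes to the supremum over the meta-natural numbers $k$. The paper's own proof is precisely this, stated as a one-line citation of these three ingredients; your write-up merely spells out the details (such as $\Omega\cdot k\in M$ and the meta-level nature of the schema) that the paper leaves implicit.
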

\begin{proof}
This follows from Lemmas \ref{Prog in RCA_0*}, \ref{Prog implies WF in RCA_0^*} and the fact that $Acc_\Omega$ is expressible by a $(\Pi^1_1(\Pi^0_3))^-$-formula.
   \end{proof}


\subsection{Upper bounds}\label{sec:upper bounds}
In this subsection, we give an upper bound for $\vert RCA_0 + (\Pi^1_1(\Pi^0_3)\text{--}\,CA_0)^- \vert $. For this, we use the fact that $\vert \Pi^1_2$--$BI_0 \vert  = \vartheta(\Omega^\omega)$ (see \cite{rathjenweiermann}).

\begin{lemma}\label{special normal form}
For every arithmetical formula $B(X)$ with all free set variables indicated, there is a $\Delta_0$-formula $R(x,X,f)$ such that
\begin{enumerate}
\item $ACA_0 \vdash B(X) \rightarrow \exists f \forall x R(x,X,f)$
\item If $T$ is a theory with $RCA_0 \subseteq T$ and $\mathcal{F}(x,y)$ is an arbitrary formula,
\[T \vdash \forall x \exists ! y \mathcal{F}(x,y) \wedge \forall x \exists z (lh(z)=x \wedge \forall i < x \mathcal{F}(i,(z)_i)),\]
then $T \vdash \forall x R(x,X,\mathcal{F}) \rightarrow B(X)$, where $R(x,X,\mathcal{F})$ results from $R(x,X,f)$ by replacing subformulae of the form $f(t)=s$ by $\mathcal{F}(t,s)$.
\end{enumerate}
\end{lemma}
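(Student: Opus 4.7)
The plan is to proceed by induction on the construction of the arithmetical formula $B(X)$, in effect performing a Skolemization that replaces each unbounded existential quantifier by a term $f(t)$. In the base case where $B(X)$ is already $\Delta_0$, I set $R(x,X,f) := B(X)$ and ignore the dummy arguments $x,f$. The Boolean connectives and bounded quantifiers are handled by repackaging the auxiliary variable $x$ and the Skolem function $f$ through primitive recursive pairing; these cases are routine.

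The essential cases are the unbounded quantifiers. Given $R_A(x,y,X,f)$ by the inductive hypothesis, for $B(X)\equiv\exists y\,A(y,X)$ I will define $R(x,X,f)$ as the $\Delta_0$ formula
\[ (\exists y\le f(0))\bigl(y=f(0)\wedge R_A(x,y,X,g)\bigr), \]
where $g$ stands for the shifted function $n\mapsto f(n+1)$, so that the first coordinate of $f$ supplies a witness and the remaining coordinates supply a Skolem function for $A$. For $B(X)\equiv\forall y\,A(y,X)$ I take a single Skolem function serving all $y$ simultaneously by setting
\[ R(x,X,f) \;:=\; R_A\bigl((x)_0,(x)_1,X,h\bigr), \]
where $h$ stands for $n\mapsto f(\langle n,(x)_1\rangle)$. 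Concretely this is implemented by syntactically replacing each atomic subformula $f(t)=s$ appearing in $R_A$ according to these recipes; the result remains $\Delta_0$. Clause (1) then follows by a straightforward induction: inside $ACA_0$, arithmetical comprehension produces the required Skolem functions from the witnesses furnished by $B(X)$.

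The main obstacle is clause (2), where $\mathcal{F}$ is only a formula serving as the virtual graph of a function, not a genuine second-order object. Since $R(x,X,\mathcal{F})$ is $\Delta_0$ it reads only finitely many values of the virtual function, and the assumption that $T$ proves $\forall x\exists z(lh(z)=x\wedge\forall i<x\,\mathcal{F}(i,(z)_i))$ supplies exactly a finite sequence code $z$ capturing a sufficiently long initial segment of that function. For each inductive step I will argue inside $T$ that evaluating $R(x,X,\mathcal{F})$ against such a $z$ reproduces the substitution instance $R_A(\dots,\mathcal{F}\dots)$ one level down, and that by the inductive assumption this forces the appropriate witness for $A$ to exist in $T$. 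The delicate bookkeeping will be tracking, through the induction on $B$, which positions in the coded $f$ correspond to which nested quantifier, so that the substitution $f\mapsto\mathcal{F}$ lines up correctly with the finite-sequence hypothesis; once the alignment is in place each step reduces to a $\Delta_0$ argument provable in $RCA_0$, yielding the implication $\forall x\,R(x,X,\mathcal{F})\to B(X)$.
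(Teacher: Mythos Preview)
Your approach is correct in substance---it is Skolemization, which is exactly what the paper does---but the paper's execution is considerably more streamlined. Rather than a structural induction on the build-up of $B(X)$, the paper first rewrites $B(X)$ in prenex normal form
\[
B(X)\;\equiv\;\forall x_1\exists y_1\cdots\forall x_r\exists y_r\,S(x_1,y_1,\dots,x_r,y_r,X)
\]
with $S$ quantifier-free, and then Skolemizes the existential block in one sweep, introducing functions $f_1,\dots,f_r$ and packing them into a single $f$ by a tuple coding. Clause (1) is then a chain of equivalences in $ACA_0$, and clause (2) follows simply by observing that the right-to-left direction of each equivalence needs only the ability to evaluate $f$ at given arguments, which the hypothesis on $\mathcal{F}$ supplies. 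This avoids all of the ``delicate bookkeeping'' you describe for tracking coordinates of $f$ through nested connectives.

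One caution about your version: calling the Boolean connectives ``routine'' is not quite accurate. Negation in particular is not closed under the shape $\exists f\forall x\,R$; if you genuinely induct on the full propositional structure you must either first pass to negation normal form or else handle $\neg$ and $\vee$ with extra tag bits in $f$. The paper's prenex reduction sidesteps this entirely, which is the main economy it buys.
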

\begin{proof}
This proof is a little adaptation of the normal form theorem V.1.4 in \cite{sosoa}. We can assume that $B(X)$ is in prenex normal form
\[B(X) \equiv \forall x_1 \exists y_1 \dots \forall x_r \exists y_r S(x_1,y_1,\dots,x_r,y_r,X),\]
with $S$ quantifier-free. Then over $ACA_0$, we have
\begin{align*}
B(X) & \leftrightarrow \exists f_1 \forall x_1 x_2 \exists y_2 \dots S(x_1,f_1(x_1), x_2,y_2,\dots,x_r,y_r,X)\\
& \leftrightarrow \exists f_1 \exists f_2 \forall x_1 x_2 x_3 \exists y_3 \dots S(x_1,f_1(x_1),x_2,f_2(x_1,x_2),x_3,y_3,\dots,x_r,y_r,X)\\
& \dots\\
& \leftrightarrow \exists f_1 \dots f_r \forall x_1 \dots x_r\, S(x_1,f_1(x_1),x_2,f_2(x_1,x_2),\dots, x_r,f_r(x_1,\dots, x_r),X)\\
& \leftrightarrow \exists f \forall x\, S((x)_1,(f)_1((x)_1),\dots,(x)_r, (f)_r((x)_1,\dots,(x)_r),X),
\end{align*}
where $(f)_k((x)_1,\dots,(x)_k) = f(\langle  k,(x)_1,\dots,(x)_k \rangle)$ and $\langle \dots \rangle$ is some primitive recursive tupel coding. Now let $R(x,X,f) := S((x)_1, (f)_1((x)_1),\dots,X)$. Note that the right-to-left directions are provable in $RCA_0$, so that part 2. follows from reading the equivalences from bottom to top.
   \end{proof}

By adapting the previous lemma, we get the following corollary.

\begin{corollary}\label{corollary special normal form}
If in addition to the conditions of Lemma \ref{special normal form}(2.), $T$ also satisfies
\[T \vdash \forall x \exists! y \mathcal{G}(x,y) \wedge \forall x \exists z  (lh(z)=x \wedge \forall i < x \mathcal{G}(i,(z)_i)),\]
then $T \vdash \forall x R(x,\mathcal{G},\mathcal{F}) \rightarrow B(\mathcal{G})$, where $R(x,\mathcal{G},\mathcal{F})$ results from $R(x,X,\mathcal{F})$ by replacing $t\in X$ by $\mathcal{G}(t,0)$.
\end{corollary}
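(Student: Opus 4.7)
The plan is to run the proof of Lemma \ref{special normal form}(2.) verbatim, letting $\mathcal{G}$ play for the set parameter $X$ the same role that $\mathcal{F}$ plays for the Skolem function $f$. As in that proof, I would first put $B(X)$ in prenex normal form
$$B(X) \equiv \forall x_1 \exists y_1 \dots \forall x_r \exists y_r\, S(x_1,y_1,\dots,x_r,y_r,X),$$
and Skolemize step by step to obtain the chain of equivalences
\begin{align*}
B(X) & \leftrightarrow \exists f_1 \dots f_r\, \forall x_1 \dots x_r\, S\bigl(x_1,f_1(x_1),\dots,x_r,f_r(x_1,\dots,x_r),X\bigr)\\
& \leftrightarrow \exists f\, \forall x\, R(x,X,f),
\end{align*}
whose right-to-left directions are, by the proof of Lemma \ref{special normal form}, already provable in $RCA_0$.

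Next, I would observe that the substitution $t\in X\mapsto \mathcal{G}(t,0)$ can be performed uniformly throughout this chain: since $X$ is a free set parameter of $B$ (never itself bound), $X$ occurs only at atomic positions, and the substitution commutes with every propositional and first-order manipulation used in the Skolemization. Applying the substitution everywhere gives the parallel chain
\begin{align*}
B(\mathcal{G}) & \leftrightarrow \exists f_1 \dots f_r\, \forall x_1 \dots x_r\, S\bigl(x_1,f_1(x_1),\dots,x_r,f_r(x_1,\dots,x_r),\mathcal{G}\bigr)\\
& \leftrightarrow \exists f\, \forall x\, R(x,\mathcal{G},f).
\end{align*}

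Finally, I would read this chain from bottom to top in $T$, exactly as in Lemma \ref{special normal form}(2.). The hypothesis on $\mathcal{F}$ (functional and sequentially representable) allows one to pass from $\forall x\, R(x,\mathcal{G},\mathcal{F})$ to $\exists f\, \forall x\, R(x,\mathcal{G},f)$ by gluing finite initial segments of $\mathcal{F}$ into a genuine function $f$. Repeating this idea ``downwards'' through the remaining equivalences yields $B(\mathcal{G})$, giving the desired implication
$$T \vdash \forall x\, R(x,\mathcal{G},\mathcal{F}) \to B(\mathcal{G}).$$
The only delicate bookkeeping point — which I expect to be the main obstacle — is ensuring that the presence of $\mathcal{G}$ in place of $X$ does not damage the syntactic form of $R$ needed by the lemma; this is handled by the new hypothesis, which guarantees that whenever the argument of Lemma \ref{special normal form} needs a bounded snapshot of the characteristic function of $X$ to evaluate a $\Delta_0$-subformula, the finite-sequence clause for $\mathcal{G}$ provides such a snapshot inside $T$, so no strength beyond that already used for $\mathcal{F}$ is required.
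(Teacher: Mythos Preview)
Your proposal is correct and is exactly the adaptation the paper has in mind: its proof consists of the single sentence ``By adapting the previous lemma, we get the following corollary,'' and your write-up spells out precisely that adaptation, substituting $\mathcal{G}$ for $X$ throughout the Skolemization chain and using the new finite-sequence hypothesis on $\mathcal{G}$ in the same way the original hypothesis on $\mathcal{F}$ was used. One small expository point: you do not literally need to ``glue $\mathcal{F}$ into a genuine function $f$'' to instantiate $\exists f$; rather, you carry $\mathcal{F}$ (and $\mathcal{G}$) as class functions through the bottom-to-top de-Skolemization, which is what the hypotheses are designed to support.
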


\begin{lemma}\label{special normal form 2}
Assuming that the conditions for $\mathcal{F}$ and $\mathcal{G}$ from Lemma \ref{special normal form} and Corollary \ref{corollary special normal form} are satisfied, then there exists a $\Delta^0_1$-formula $P$ such that
\[T \vdash  \forall x R(x,\mathcal{G},\mathcal{F}) \leftrightarrow \forall x P(\mathcal{G}[x], \mathcal{F}[x])\]
\end{lemma}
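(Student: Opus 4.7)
The plan is to exploit the fact that $R(x,X,f)$ is $\Delta_0$, so all its quantifiers are bounded and, at any fixed $x$, only finitely many atomic statements of the form $t\in X$ or $f(t)=s$ can occur in its evaluation. The goal is therefore to replace the ``oracles'' $\mathcal{G}$ and $\mathcal{F}$ by sufficiently long sequence codes that record their graphs up to a primitive recursive bound $b(x)$ in $x$.

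First I would show by a straightforward induction on the build-up of the $\Delta_0$-formula $R(x,X,f)$ that there is a primitive recursive function $b$ such that every subterm $t$ appearing as an argument to $X$ or $f$ inside $R(x,X,f)$ evaluates, under any assignment to the bounded quantifiers of $R$, to a value $\le b(x)$. (Bounded numerical quantifiers only increase the bound by polynomial overhead, which $T\supseteq RCA_0$ handles.) Next, using the sequence-existence hypotheses on $\mathcal{F}$ and $\mathcal{G}$, in $T$ one can form, for each $x$, codes
\[
\mathcal{G}[x]=z_g,\qquad \mathcal{F}[x]=z_f,
\]
of lengths $b(x)+1$ with $\forall i\le b(x)\,(\mathcal{G}(i,(z_g)_i)\wedge \mathcal{F}(i,(z_f)_i))$. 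These are uniquely determined by functionality.

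Then I would define the $\Delta^0_1$-formula $P(u,v,x)$ by formally replacing in $R(x,X,f)$ every atomic subformula $t\in X$ by $(u)_t\neq 0$ and every atomic $f(t)=s$ by $(v)_t=s$ (after first rewriting $R$ so that $f$ only occurs in such equations, which is standard for $\Delta_0$-formulas in the language with coded finite functions). Since $R$ is $\Delta_0$ and $u,v$ are finite sequence codes, $P(u,v,x)$ is $\Delta^0_1$. The equivalence
\[
T\vdash \forall x\bigl(R(x,\mathcal{G},\mathcal{F})\leftrightarrow P(\mathcal{G}[x],\mathcal{F}[x],x)\bigr)
\]
is then proved by induction, inside $T$, on the structure of the $\Delta_0$-formula $R$: for atomic occurrences this is exactly the defining property of $\mathcal{G}[x]$ and $\mathcal{F}[x]$ together with the functionality of $\mathcal{G}$ and $\mathcal{F}$, and bounded quantifiers and propositional connectives carry through routinely.

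The one point to watch, and what I expect to be the main technical obstacle, is the bookkeeping of the bound $b(x)$: one has to verify that for every substitution of witnesses for the bounded quantifiers of $R$, the terms substituted into the arguments of $X$ and $f$ really stay below $b(x)$, so that $(\mathcal{G}[x])_t$ and $(\mathcal{F}[x])_t$ are genuinely defined and agree with $\mathcal{G}$ and $\mathcal{F}$. Once $b$ is fixed uniformly for the finitely many term-occurrences in $R$, this is a standard $\Sigma^0_1$-induction-free verification available in $RCA_0$, and the final statement of the lemma follows by universally quantifying over $x$.
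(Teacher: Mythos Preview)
Your proposal is correct and follows essentially the same approach as the paper: the paper's proof consists of the single line ``By induction on the build-up of $R$,'' and what you have written is a faithful elaboration of exactly that induction, making explicit the primitive recursive bound on the term-arguments and the use of the sequence-existence hypotheses to form the finite approximations $\mathcal{G}[x]$ and $\mathcal{F}[x]$.
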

\begin{proof}
By induction on the build-up of $R$.
   \end{proof}

\begin{theorem}\label{upper bound RCA_0 + Pi11(Pi03)-CA-}
Assume that $B(X,n)$ is a $\Pi^0_3$-formula, then
\[\Pi^1_2\text{--}\,BI_0 \vdash \forall n [ \forall A \in Rec((\Pi^1_1)^-) B(A(\cdot),n) \leftrightarrow (\forall X \subseteq \omega) B(X,n) ],\]
where $B(A(\cdot),n)$ results from $B(X,n)$ by replacing $t \in X$ by $A(t)$.
\end{theorem}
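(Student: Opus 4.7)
The backward implication holds trivially: each $A \in Rec((\Pi^1_1)^-)$ defines a subset of $\omega$, so if $B(X,n)$ holds for every $X \subseteq \omega$ it holds in particular for each such $A(\cdot)$.

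For the forward direction, the first step is to apply the normal form machinery of Lemmas \ref{special normal form}, \ref{corollary special normal form}, and \ref{special normal form 2}. Since $B(X,n)$ is $\Pi^0_3$ and hence arithmetical, one obtains a $\Delta^0_1$-formula $P$ such that, provably in $\Pi^1_2$-$BI_0$,
\[
B(X,n) \,\leftrightarrow\, \exists f \,\forall x\, P(X[x], f[x], n),
\]
together with the analogous equivalence for $B(A(\cdot),n)$ when $A$ is a $(\Pi^1_1)^-$-predicate (using the corollary to strip the set parameter). Consequently, $\neg B(X,n)$ is equivalent to well-foundedness of the tree
\[
T_X \,=\, \{\sigma \in \omega^{<\omega} : \forall y < |\sigma|\, P(X[y], \sigma[y], n)\},
\]
which is a $\Pi^1_1$ statement in $X$.

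My plan is then to argue by contradiction: assume $\forall A \in Rec((\Pi^1_1)^-)\,B(A(\cdot),n)$ yet $\neg B(X,n)$ for some $X$. The well-foundedness of $T_X$ allows one to apply $\Pi^1_2$-bar induction along $T_X$ to a $\Pi^1_2$-formula $\psi(\sigma)$ expressing the existence of an $A \in Rec((\Pi^1_1)^-)$ with $A[|\sigma|]$ matching $\sigma$ on all queries made by $P$ at levels $< |\sigma|$, and such that $T_A$ is well-founded above $\sigma$. Progressiveness would combine lightface witnesses at all immediate extensions of $\sigma$ into a single lightface $A$ agreeing with $\sigma$ on its relevant initial segment; the conclusion $\psi(\langle\rangle)$ then delivers a recursive $(\Pi^1_1)^-$-counterexample, contradicting the hypothesis. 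The right-to-left equivalence $\neg B(A(\cdot),n) \leftrightarrow$ well-foundedness of $T_A$ then closes the loop.

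The main obstacle is precisely this uniformity in the progressiveness step: given lightface witnesses $A_\tau$ for all immediate successors $\tau$ of $\sigma$ in $T_X$, one must assemble a single lightface $A_\sigma$ realizing the well-foundedness beyond $\sigma$ without letting $X$ leak into the definition of $A_\sigma$. The idea is that the $\Pi^1_1$ well-foundedness statement for $T_X$ supplies a rank structure along which a $(\Pi^1_1)^-$-definable recursion can be threaded; this is exactly where the bar-induction strength of $\Pi^1_2$-$BI_0$ is indispensable, since it allows the transfinite patching to be internalized without extra set parameters. The remainder of the proof is bookkeeping around the normal-form equivalence and the decidable-tree membership provided by $P \in \Delta^0_1$.
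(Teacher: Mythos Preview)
Your treatment of the right-to-left direction is a genuine gap. You claim it is trivial because ``each $A \in Rec((\Pi^1_1)^-)$ defines a subset of $\omega$,'' but $\Pi^1_2$--$BI_0$ does \emph{not} prove $(\Pi^1_1)^-$-comprehension (its proof-theoretic ordinal is $\vartheta(\Omega^\omega)$, whereas $ACA_0 + (\Pi^1_1\text{--}CA_0)^-$ already has ordinal $\vartheta(\varepsilon_{\Omega+1})$). So from $\forall X\,B(X,n)$ you cannot simply instantiate $X$ by $\{m : A(m)\}$; that set need not exist in the model. The paper handles this direction via $\Sigma^1_4$-$\omega$-model reflection, which \emph{is} available in $\Pi^1_2$--$BI_0$: assuming $\neg B(A(\cdot),n_0)$, reflect into an $\omega$-model $M \models ACA_0 + \neg B(A,n_0)$, set $X := \{m : M \models A(m)\}$ by arithmetical comprehension, and conclude $\neg B(X,n_0)$ since $B$ is arithmetical and $M$ is an $\omega$-model.

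For the left-to-right direction the paper takes a quite different and much more direct route than your bar-induction sketch. It applies the normal-form lemmas to $\neg B$ (not to $B$), obtaining $\neg B(X,n_0) \leftrightarrow \exists f\,\forall x\,P(X[x],f[x])$ and then, pairing $X$ and $f$ into a single $h$, the parameter-free $\Sigma^1_1$ statement $\exists h\,\forall x\,P(h_0[x],h_1[x])$. The predicate $I(\sigma) := \exists h\,(\sigma \subseteq h \wedge \forall x\,P(h_0[x],h_1[x]))$ is lightface $\Sigma^1_1$, hence recursive in the $(\Pi^1_1)^-$-complete set of recursive well-founded trees. One then builds the leftmost branch $\mathcal{H}$ by $\mathcal{H}(0) = \langle\rangle$, $\mathcal{H}(n+1) = \mathcal{H}(n)^\frown\langle \mu k.\,I(\mathcal{H}(n)^\frown\langle k\rangle)\rangle$; the graph of $\mathcal{H}$ is $\Sigma^1_2$, and $\Sigma^1_2$-induction (available in $\Pi^1_2$--$BI_0$) shows $\mathcal{H}$ is total. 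Thus $\mathcal{H} \in Rec((\Pi^1_1)^-)$, and unwinding the normal form gives $\neg B(\mathcal{H}_0(\cdot),n_0)$. This is essentially a formalization of the Kleene basis theorem. Your bar-induction plan, by contrast, works along the tree $T_X$ which carries $X$ as a parameter, and the progressiveness step must assemble a \emph{lightface} $A_\sigma$ from witnesses indexed by the immediate successors of $\sigma$ in $T_X$ --- a set that itself depends on $X$. You flag this obstacle but do not resolve it; the paper's approach sidesteps it entirely by absorbing $X$ into the branch being constructed.
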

\begin{proof}
Pick an arbitrary natural number $n_0$. Assume that we have \[(\exists A \in Rec((\Pi^1_1)^-) ) \text{ } \neg B(A(\cdot),n_0).\] In \cite{barinductionandomegamodelreflection} it is proven that $\Sigma^1_4$-$\omega$-model reflection holds in $\Pi^1_2$--$BI_0$. Therefore, there exists an $\omega$-model $M$ such that $n_0 \in M$ and $M \models ACA_0 + \neg B(A,n_0)$. Define $X$ as the set $\{ m \in \omega : M \models A(m)\}$. This set exists by arithmetical comprehension. Hence, $\neg B(X,n_0)$ is valid.
\medskip

For the reverse implication, assume $(\exists X \subseteq \omega) \neg B(X,n_0)$. Using Lemmas \ref{special normal form} and \ref{special normal form 2} on the formula $\neg B(X,n_0)$, we have $\exists X \exists f \forall x P(X[x], f[x])$, thus $\exists h \forall x P(h_0[x],h_1[x])$. Define $I(\sigma)$ as $\exists h (\sigma \subseteq h \wedge \forall x P(h_0[x],h_1[x]))$. 
Define a class function $\mathcal{H}$ by $\mathcal{H}(0) = \langle \rangle$ and $\mathcal{H}(n+1) = \mathcal{H}(n)^{\frown}\langle \mu k \cdot I(\mathcal{H}(n)^{\frown} \langle k \rangle) \rangle$. Note that $\mathcal{H}$ has a $\Sigma^1_2$--graph. As $\Pi^1_2$--$BI_0$ proves $\Sigma^1_2$-induction on the naturals, one can prove that $\mathcal{H}(n)$ is always defined by $\exists h \forall x P(h_0[x],h_1[x])$. Also note that $\mathcal{H}$ is recursive in $I$. We can show that $I$ is recursive in a $(\Pi^1_1)^-$-formula. This because $I$ is recursive in $W$, the set of all recursive well-founded trees, which is $(\Pi^1_1)^-$-complete. Therefore, we obtain that $\mathcal{H} \in Rec((\Pi^1_1)^-)$. 
We also have $\forall x P(\mathcal{H}_0[x], \mathcal{H}_1[x])$ by this construction. Letting $\mathcal{G}= \mathcal{H}_0$ and $\mathcal{F} = \mathcal{H}_1$ in Lemma \ref{special normal form 2}, we get $\forall x R(x,\mathcal{G},\mathcal{F})$. By Corollary \ref{corollary special normal form} we get $\neg B(\mathcal{G},n_0)$.
   \end{proof}

\begin{corollary}
$\vert RCA_0 + (\Pi^1_1(\Pi^0_3)\text{--}\,CA_0)^- \vert  = \vartheta(\Omega^\omega)$. 
\end{corollary}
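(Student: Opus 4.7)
The plan is to sandwich $|RCA_0+(\Pi^1_1(\Pi^0_3)\text{--}\,CA_0)^-|$ between matching bounds. The lower bound $\vartheta(\Omega^\omega)\leq |RCA_0+(\Pi^1_1(\Pi^0_3)\text{--}\,CA_0)^-|$ is already established by Theorem \ref{lower bound RCA_0 + Pi11-CA}, so only the upper bound remains. For this I would exhibit $RCA_0+(\Pi^1_1(\Pi^0_3)\text{--}\,CA_0)^-$ as a subtheory of $\Pi^1_2\text{--}BI_0$, at least for the $\Pi^0_2$-sentences (in particular the primitive recursive well-foundedness claims of shape $WF(<\upharpoonright\alpha)$ with $\alpha\in OT'(\vartheta)\cap\Omega$) that underlie the proof-theoretic ordinal, and then invoke the known identity $|\Pi^1_2\text{--}BI_0|=\vartheta(\Omega^\omega)$ from \cite{rathjenweiermann}.

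To carry out the embedding, I would go axiom by axiom. The basic, logical and arithmetical axioms of $RCA_0$ are trivially available in $\Pi^1_2\text{--}BI_0$, so the only non-trivial work is to discharge each comprehension instance of $(\Pi^1_1(\Pi^0_3)\text{--}\,CA_0)^-$: the existence of a set $Z$ with $n\in Z\leftrightarrow \forall X\,B(X,n)$ for a $\Pi^0_3$ formula $B$ without free set parameters. Theorem \ref{upper bound RCA_0 + Pi11(Pi03)-CA-} is precisely the tool for this, yielding, provably in $\Pi^1_2\text{--}BI_0$, the equivalence $\forall X\,B(X,n) \leftrightarrow \forall A\in Rec((\Pi^1_1)^-)\,B(A(\cdot),n)$. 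Indexing the class $Rec((\Pi^1_1)^-)$ by Kleene-style reduction codes relative to a universal $(\Pi^1_1)^-$-set $W$ turns the right-hand side into a predicate in $n$ that is arithmetical in $W$ with only numerical quantifiers, and the set $Z$ itself can then be extracted by means of the $\Sigma^1_4$-$\omega$-model reflection already invoked in the proof of Theorem \ref{upper bound RCA_0 + Pi11(Pi03)-CA-}.

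The main obstacle is more bookkeeping than conceptual: one must verify that the equivalence in Theorem \ref{upper bound RCA_0 + Pi11(Pi03)-CA-} relativises uniformly in the numerical parameter $n$, so that a single comprehension set $Z$ (rather than merely individual bits of information) can be formed in $\Pi^1_2\text{--}BI_0$, and that the coding of $Rec((\Pi^1_1)^-)$ yields a $\Pi^1_2\text{--}BI_0$-definable class closed under the reductions needed by $B$. Once this is confirmed, any ordinal $\alpha\in OT'(\vartheta)\cap\Omega$ with $WF(<\upharpoonright\alpha)$ provable in $RCA_0+(\Pi^1_1(\Pi^0_3)\text{--}\,CA_0)^-$ automatically has $WF(<\upharpoonright\alpha)$ provable in $\Pi^1_2\text{--}BI_0$, forcing $\alpha<\vartheta(\Omega^\omega)$. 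Combining this with Theorem \ref{lower bound RCA_0 + Pi11-CA} yields the claimed equality.
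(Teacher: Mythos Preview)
Your plan coincides with the paper's one-line proof: combine Theorem~\ref{lower bound RCA_0 + Pi11-CA} for the lower bound with Theorem~\ref{upper bound RCA_0 + Pi11(Pi03)-CA-} and the known value $|\Pi^1_2\text{--}\,BI_0|=\vartheta(\Omega^\omega)$ for the upper bound. One slip worth correcting: $WF(<\upharpoonright\alpha)$ is a $\Pi^1_1$ sentence, not $\Pi^0_2$; this is harmless, since Theorem~\ref{upper bound RCA_0 + Pi11(Pi03)-CA-} applies precisely to such parameter-free $\Pi^1_1(\Pi^0_3)$ statements, and that equivalence (rather than a second appeal to $\omega$-model reflection) is already what lets you pass from the $Rec((\Pi^1_1)^-)$-restricted interpretation back to the unrestricted well-foundedness claim.
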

\begin{proof}
Follows from Theorems \ref{lower bound RCA_0 + Pi11-CA}, \ref{upper bound RCA_0 + Pi11(Pi03)-CA-} and the fact that $ \vert \Pi^1_2\text{--}\,BI_0 \vert = \vartheta(\Omega^\omega)$.
 
\end{proof}

In a similar way, one could also show the following lemma.

\begin{lemma}
Assume that $B(X,n)$ is a $\Pi^0_2$-formula, then
\[\Pi^1_1\text{--}\,BI_0 \vdash \forall n [ \forall A \in Rec((\Pi^1_1)^-) B(A(\cdot),n) \leftrightarrow (\forall X \subseteq \omega) B(X,n) ].\]
\end{lemma}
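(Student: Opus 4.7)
The plan is to mirror the proof of Theorem \ref{upper bound RCA_0 + Pi11(Pi03)-CA-}, dropping one complexity level throughout: $B$ is $\Pi^0_2$ instead of $\Pi^0_3$, the ambient theory is $\Pi^1_1\text{--}\,BI_0$ instead of $\Pi^1_2\text{--}\,BI_0$, and the $\omega$-model reflection principle I invoke is $\Sigma^1_3$ instead of $\Sigma^1_4$. According to \cite{barinductionandomegamodelreflection}, $\Pi^1_1\text{--}\,BI_0$ proves $\Sigma^1_3$-$\omega$-model reflection, so the main external ingredient is already available.

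For the direction $\forall A \in Rec((\Pi^1_1)^-)\, B(A(\cdot),n) \Rightarrow \forall X\,B(X,n)$, I would argue by contrapositive exactly as in Theorem \ref{upper bound RCA_0 + Pi11(Pi03)-CA-}. Fixing $n_0$ and assuming $(\exists A \in Rec((\Pi^1_1)^-))\,\neg B(A(\cdot),n_0)$, the hypothesis is of complexity at most $\Sigma^1_3$ (the ambient existential over $A \in Rec((\Pi^1_1)^-)$ together with $\neg B$ being $\Sigma^0_2$), so $\Sigma^1_3$-$\omega$-model reflection produces an $\omega$-model $M$ with $n_0 \in M$ and $M \models ACA_0 + \neg B(A(\cdot),n_0)$. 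Then $X := \{m \in \omega : M \models A(m)\}$ exists by arithmetical comprehension and witnesses $\neg B(X,n_0)$.

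For the reverse direction, I would again argue by contrapositive: assume $\exists X\,\neg B(X,n_0)$ and apply Lemmas \ref{special normal form} and \ref{special normal form 2} to $\neg B(X,n_0)$ to obtain $\exists h \forall x\, P(h_0[x],h_1[x])$ with $P \in \Delta^0_1$. Define $I(\sigma) := \exists h\,(\sigma \subseteq h \wedge \forall x\, P(h_0[x],h_1[x]))$ and the class function $\mathcal{H}$ recursively by $\mathcal{H}(0) := \langle\rangle$ and $\mathcal{H}(n+1) := \mathcal{H}(n)^{\frown}\langle \mu k \cdot I(\mathcal{H}(n)^{\frown}\langle k\rangle)\rangle$, as before. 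Since $\neg B$ is $\Sigma^0_2$, the matrix $\forall x\,P$ is $\Pi^0_1$ and $I$ is merely $\Sigma^1_1$; the totality of $\mathcal{H}$ on $\omega$ should then be provable by the induction fragment available in $\Pi^1_1\text{--}\,BI_0$, one level below the $\Sigma^1_2$-induction used in the previous theorem. The oracle $I$ remains recursive in a $(\Pi^1_1)^-$-complete object such as Kleene's $W$, so $\mathcal{H} \in Rec((\Pi^1_1)^-)$; setting $\mathcal{G} := \mathcal{H}_0$, $\mathcal{F} := \mathcal{H}_1$ and invoking Corollary \ref{corollary special normal form} delivers $\neg B(\mathcal{G},n_0)$.

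The main obstacle will be the complexity bookkeeping at two spots: (i) verifying that the witnessing statement for the contrapositive of the first direction is really $\Sigma^1_3$ (so that the reflection provable in $\Pi^1_1\text{--}\,BI_0$ suffices), which rests on $\neg B$ having one fewer arithmetical quantifier than in Theorem \ref{upper bound RCA_0 + Pi11(Pi03)-CA-}; and (ii) checking that the definition of $\mathcal{H}$ falls into an induction fragment provably handled by $\Pi^1_1\text{--}\,BI_0$, because the naive $\mu$-search still mixes the $\Sigma^1_1$ content of $I$ with a $\Pi^1_1$ minimality clause $\forall j<k\,\neg I(\cdots)$. I expect both to resolve once one exploits that the $\Pi^0_1$ matrix makes the inner computations arithmetical and that bar induction in $\Pi^1_1\text{--}\,BI_0$ is strong enough to justify the recursion step despite its apparent syntactic $\Sigma^1_2$ presentation.
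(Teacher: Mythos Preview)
Your proposal is correct and takes exactly the approach the paper indicates: the paper's own ``proof'' is the single remark that the lemma is obtained \emph{in a similar way} to Theorem~\ref{upper bound RCA_0 + Pi11(Pi03)-CA-}, and your adaptation---lowering each complexity level by one, invoking $\Sigma^1_3$-$\omega$-model reflection from \cite{barinductionandomegamodelreflection} in place of $\Sigma^1_4$, and using the induction available in $\Pi^1_1\text{--}\,BI_0$ in place of $\Sigma^1_2$-induction---is precisely what that remark intends. Your identification of the two bookkeeping points (the reflection complexity and the induction needed for the totality of $\mathcal{H}$) is apt; the paper does not spell these out either.
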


Therefore, $\vert \Pi^1_1\text{--}\,BI_0 \vert = \varphi\omega 0$ yields that the ordinal of $RCA_0^* + (\Pi^1_1(\Pi^0_2)\text{--}\,CA_0)^-$ is bounded above by $\varphi \omega 0$.
Actually one can prove that the ordinal of the theory $RCA_0^* + (\Pi^1_1(\Pi^0_2)\text{--}\,CA_0)^-$ is even much lower. Using $WKL_0$ and the normal form theorem II.2.7 in \cite{sosoa}, one can prove that every $(\Pi^1_1(\Pi^0_2))^-$-formula is equivalent with a $(\Pi^0_2)^-$-formula (and if the original formula has extra parameters, then the equivalent one will also have those parameters).
The intuitive idea behind this is that $WKL_0$ proves that the projection of closed set is a closed set, meaning that $\exists X \forall x\dots $ can be reduced to $\forall z \dots$ (a closed set can be seen in some sense as a $\Pi^0_1$-formula). 
Furthermore, one can proof the following lemmas.

\begin{lemma}
Let $\mathcal{F}$ be $(\Pi^1_1(\Pi^0_n))^-$ or $(\Pi^0_n)^-$. Then the first order part of the theory $WKL_0 + \mathcal{F}$--$\,CA_0$ is the same as that of $RCA_0 + \mathcal{F}$--$\,CA_0$.
\end{lemma}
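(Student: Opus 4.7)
The plan is to prove first-order conservativity by a model-theoretic argument, invoking the standard Harrington conservation theorem in a form that also guarantees absoluteness for $\Pi^1_1$ sentences without free set parameters. Write $T_0 := RCA_0 + \mathcal{F}\text{--}CA_0$ and $T_1 := WKL_0 + \mathcal{F}\text{--}CA_0$. Since $T_1 \supseteq T_0$, what needs to be shown is that a first-order sentence $\varphi$ unprovable in $T_0$ remains unprovable in $T_1$; equivalently, that every countable model of $T_0$ can be expanded to a countable model of $T_1$ with the same first-order part.

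First I would fix a countable $(M,\mathcal{S}_0) \models T_0$ and apply Harrington's tree-forcing construction to produce a countable expansion $(M,\mathcal{S}_1)$ with $\mathcal{S}_0 \subseteq \mathcal{S}_1$, the same first-order part $M$, and $(M,\mathcal{S}_1) \models WKL_0$. The crucial extra feature needed from the construction is that every $\Pi^1_1$ sentence $\forall X\, B(X,\bar k)$ with $B$ arithmetical and $\bar k$ a tuple of numerical parameters from $M$ is absolute between $(M,\mathcal{S}_0)$ and $(M,\mathcal{S}_1)$: the downward direction is immediate from $\mathcal{S}_0 \subseteq \mathcal{S}_1$, and the upward direction is the precise content of Harrington's argument, which ensures that the generic paths added through the infinite binary trees in $\mathcal{S}_0$ never falsify a pre-existing true $\Pi^1_1$ statement that has no set parameters.

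Next I would verify $(M,\mathcal{S}_1) \models \mathcal{F}\text{--}CA_0$, splitting on the form of $\mathcal{F}$. In the case $\mathcal{F} = (\Pi^0_n)^-$, a comprehension instance with numerical parameters $\bar k$ asks for the set $\{m : M \models B(m,\bar k)\}$; this set is determined by the first-order structure alone, lies in $\mathcal{S}_0$ by $T_0$-comprehension, and therefore still witnesses the instance in $(M,\mathcal{S}_1)$. In the case $\mathcal{F} = (\Pi^1_1(\Pi^0_n))^-$, a comprehension instance with numerical parameters $\bar k$ asks for the set $\{m : (M,\mathcal{S}_1) \models \forall X\, B(X,m,\bar k)\}$, which by the absoluteness feature coincides with $\{m : (M,\mathcal{S}_0) \models \forall X\, B(X,m,\bar k)\}$; the latter lies in $\mathcal{S}_0 \subseteq \mathcal{S}_1$ by $T_0$-comprehension, so the instance holds in $(M,\mathcal{S}_1)$. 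Since $\neg\varphi$ is first-order, it is automatically preserved.

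The main obstacle is the $\Pi^1_1$-absoluteness property of the Harrington extension, in particular its upward direction. This is essentially the content of Harrington's proof that $WKL_0$ is $\Pi^1_1$-conservative over $RCA_0$ (as presented in Simpson's \emph{Subsystems of Second Order Arithmetic}, Section~IX.2), and the restriction to formulas without free set parameters built into the definition of $\mathcal{F}$ is exactly the setting covered by that argument; the remaining verifications are routine.
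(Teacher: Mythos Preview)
Your proposal is correct and follows essentially the same approach the paper indicates: the paper's proof is simply the remark ``Can be proved easily by little adaptations of paragraph IX.2 in \cite{sosoa},'' i.e., Harrington's forcing construction for the $\Pi^1_1$-conservativity of $WKL_0$ over $RCA_0$, which is precisely what you have spelled out. Your observation that the absence of free set parameters in $\mathcal{F}$-formulas is exactly what makes the $\Pi^1_1$-absoluteness of the Harrington extension applicable is the key point, and it matches the intended ``little adaptation.''
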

\begin{proof}
Can be proved easily by little adaptations of paragraph IX.2 in \cite{sosoa}.
 
\end{proof}
In a similar way, 
\begin{lemma}
The first order part of $RCA_0 + (\Pi^0_2)^-$--IND is $I\Sigma_3$. 
\end{lemma}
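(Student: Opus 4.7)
The plan is to establish the two inclusions between $I\Sigma_3$ and the first-order part of $RCA_0 + (\Pi^0_2)^-$-IND following the template of \S IX.2 in Simpson \cite{sosoa}, but paying careful attention to how the $\cdot^{-}$ restriction (no free set parameters) interacts with the induction and comprehension schemes.

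For the upper bound, I would proceed by a standard model-theoretic interpretation. Take a countable model $M \models I\Sigma_3$ and let $\mathcal{S}$ be the collection of subsets of $|M|$ definable by $\Delta^0_1$-formulas with number parameters from $M$. It is routine to verify that $(M,\mathcal{S}) \models RCA_0$: the basic axioms and $\Delta^0_1$-CA hold by construction, and $\Sigma^0_1$-IND transfers from $I\Sigma_1 \subseteq I\Sigma_3$. To check $(\Pi^0_2)^-$-IND in $(M,\mathcal{S})$, note that since such a formula contains no free set variable, when evaluated in $(M,\mathcal{S})$ it is a genuine first-order formula of $\mathcal{L}_{PA}$, and induction on it is a consequence of $I\Sigma_3$ in $M$. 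Hence every first-order consequence of $RCA_0 + (\Pi^0_2)^-$-IND holds in $M$, so is a theorem of $I\Sigma_3$.

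For the lower bound, the task is to derive $\Sigma_3$-induction inside $RCA_0 + (\Pi^0_2)^-$-IND. Given a $\Sigma_3$-formula $\varphi(n) \equiv \exists x \forall y \exists z\, R(n,x,y,z)$ with $R$ bounded, assume $\varphi(0)$ and $\forall n(\varphi(n) \to \varphi(n+1))$. The idea is to use $\Delta^0_1$-comprehension to form a set $X$ coding, uniformly in the numerical parameters, the stage-by-stage sequence of witnesses for the leading existential quantifier. Once such a uniformly $\Delta^0_1$-definable $X$ is introduced, the statement $\varphi(n)$ becomes provably equivalent to a $\Pi^0_2$-statement in which $X$ appears only as an explicit $\Delta^0_1$-definition with number parameters, not as a free set variable. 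Induction then falls within the $(\Pi^0_2)^-$-IND scheme, yielding $\forall n\, \varphi(n)$; the dual least-number principle for $\Pi_3$ is handled by the same trick.

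The main obstacle, and the only delicate point, is the quantifier-counting in the lower-bound direction. Specifically, one must arrange the auxiliary set $X$ so that it is given by a fixed $\Delta^0_1$-formula of the numerical parameters, with no free second-order variable surreptitiously creeping in, since $(\Pi^0_2)^-$-IND forbids set parameters. This is achieved by working with a canonical $\Sigma_1$-definition of the witness-recording function and verifying $\Delta^0_1$-ness uniformly, rather than by introducing $X$ as a free variable bound by an outer quantifier. Provided this is done cleanly, the remainder of the argument is a routine adaptation of the reverse-mathematics conservation techniques of Simpson \cite{sosoa} IX.2.
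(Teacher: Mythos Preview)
Your upper-bound argument is fine and is exactly the adaptation of Simpson~IX.1 that the paper has in mind: from any $M\models I\Sigma_3$ the structure $(M,\Delta^0_1\text{-Def}(M))$ satisfies $RCA_0$, and a $(\Pi^0_2)^-$-formula, having no set parameters, is a first-order $\Pi_2$-formula, so induction on it already holds in $M$ (indeed already in $I\Sigma_2$).

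The lower bound, however, does not go through as you describe it. The crucial step is your claim that the ``stage-by-stage sequence of witnesses'' for the leading existential of a $\Sigma_3$-formula $\exists x\,\forall y\,\exists z\,R(n,x,y,z)$ can be given by a $\Delta^0_1$-definition. It cannot: picking out (say) the least such $x$ requires deciding the $\Pi_2$-matrix $\forall y\,\exists z\,R$, and no amount of $\Delta^0_1$-comprehension will produce that set. Consequently your plan to substitute a $\Delta^0_1$-term for $X$ and land in the parameter-free $(\Pi^0_2)^-$-scheme collapses at the very point you flag as ``the only delicate point''. In fact, read literally, $(\Pi^0_2)^-$-IND over $RCA_0$ is nothing more than first-order $I\Pi_2=I\Sigma_2$ (since the formulas carry no set parameters), and one checks by the same model construction that the first-order part is exactly $I\Sigma_2$; so no argument can push it up to $I\Sigma_3$.

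What makes the lemma work in the paper's context is \emph{comprehension}, not induction: with $(\Pi^0_2)^-$-CA one forms the set $A=\{\langle n,x\rangle:\forall y\,\exists z\,R(n,x,y,z)\}$, rewrites $\varphi(n)$ as the $\Sigma^0_1(A)$-formula $\exists x\,(\langle n,x\rangle\in A)$, and then applies the $\Sigma^0_1$-induction of $RCA_0$ \emph{with the set parameter $A$}. That is the missing idea; your attempt to manufacture $A$ from $\Delta^0_1$-CA alone is where the argument fails. (The paper's own proof is just the one-line reference ``adapt \S IX.1 of \cite{sosoa}'', so the intended mechanism is precisely this comprehension-then-$\Sigma^0_1$-induction manoeuvre.)
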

\begin{proof}
Can be proved by adaptations of paragraph IX.1 in \cite{sosoa}.
 
\end{proof}

Hence, $\vert RCA_0 + (\Pi^1_1(\Pi^0_2)$--$\,CA_0)^-\vert  = \vert I\Sigma_3 \vert = \omega^{\omega^{\omega^\omega}}$. In a similar but more technical way, one could prove that $RCA^*_0 + (\Pi^1_1(\Pi^0_2))^- $ is $\Pi_4$-conservative over $I\Sigma_2$. Hence, $\vert RCA^*_0 + (\Pi^1_1(\Pi^0_2))^- \vert = \omega^{\omega^\omega}$. We want to thank Leszek Ko\l odziejczyk, who reminded us of these facts. 

\medskip

There are still some open questions left, for example what is the ordinal of $RCA_0 + (\Pi^1_1(\Pi^0_4)\text{--}\,CA_0)^-$ or $RCA_0 + (\Pi^1_1\text{--}\,CA_0)^-$ etc. We state the following conjectures

\begin{conjecture}
\leavevmode
\begin{enumerate}
\item $\vert RCA_0^* + (\Pi^1_1(\Pi^0_3)\text{--}\,CA_0)^- \vert = \varphi \omega 0$,
\item $\vert RCA_0^* + (\Pi^1_1\text{--}\,CA_0)^- \vert = \varphi \omega 0$,
\item $\vert RCA_0 + (\Pi^1_1\text{--}\,CA_0)^- \vert = \vartheta(\Omega^\omega)$.
\end{enumerate}
\end{conjecture}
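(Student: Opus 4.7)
The plan is to treat the three parts as three separate ordinal computations in which the lower bounds come essentially for free from what has already been proved, while the real work lies in the matching upper bounds. Since $(\Pi^1_1(\Pi^0_3))^- \subseteq (\Pi^1_1)^-$, parts (2) and (3) inherit their lower bounds from Theorems \ref{lower bound RCA_0^* + Pi11-CA} and \ref{lower bound RCA_0 + Pi11-CA} respectively, and part (1) is exactly the lower bound already established in Theorem \ref{lower bound RCA_0^* + Pi11-CA}. So the problem reduces to establishing the three upper bounds.

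For the upper bounds my template is Theorem \ref{upper bound RCA_0 + Pi11(Pi03)-CA-}: embed the theory into a bar-induction theory whose proof-theoretic ordinal is known (namely $|\Pi^1_1\text{--}BI_0|=\varphi\omega 0$, as used implicitly at the end of section \ref{sec:upper bounds}, and $|\Pi^1_2\text{--}BI_0|=\vartheta(\Omega^\omega)$ from \cite{rathjenweiermann}), by showing that when only essentially recursive set parameters are considered, the relevant comprehension scheme is available inside the bar-induction theory via $\omega$-model reflection. For part (3) I would note that the restriction to $\Pi^0_3$ in Theorem \ref{upper bound RCA_0 + Pi11(Pi03)-CA-} was not really needed: Lemma \ref{special normal form} already produces a Skolem normal form for \emph{any} arithmetical matrix, so for a general $(\Pi^1_1)^-$-formula $\forall X\, B(X,n)$ we still obtain an equivalent form $\forall X\,\exists f\,\forall x\, R(x,X,f)$ with $R\in\Delta_0$. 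The $\Sigma^1_4$-$\omega$-model reflection of $\Pi^1_2\text{--}BI_0$ still applies, and the recursive construction of $\mathcal H$ via $\Sigma^1_2$-induction in the reverse direction goes through verbatim. This should upgrade Theorem \ref{upper bound RCA_0 + Pi11(Pi03)-CA-} to the full $(\Pi^1_1)^-$-scheme and settle part (3).

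For parts (1) and (2), the target $\varphi\omega 0$ over the weaker base $RCA_0^*$ suggests using $\Pi^1_1\text{--}BI_0$ in place of $\Pi^1_2\text{--}BI_0$ in the template. Concretely I would try to prove, for an arithmetical (or $\Pi^0_3$) matrix $B(X,n)$,
\[
\Pi^1_1\text{--}BI_0 \vdash \forall n \bigl[\, \forall A\in Rec((\Pi^1_1)^-)\, B(A(\cdot),n) \leftrightarrow \forall X\subseteq\omega\, B(X,n)\, \bigr],
\]
using $\Sigma^1_3$-$\omega$-model reflection (available in $\Pi^1_1\text{--}BI_0$) for the left-to-right direction and $\Sigma^1_1$-induction on $\mathbb N$ for the construction of the class function $\mathcal H$ in the converse. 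Combined with the already established lower bounds, both parts would then collapse to $\varphi\omega 0$.

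The main obstacle in my view is verifying that the entire apparatus of section \ref{sec:upper bounds} survives the weakening from $RCA_0$ to $RCA_0^*$. The key steps, namely coding of finite sequences, manipulation of Skolem functions, and the bar-induction-driven existence of $\mathcal H$, all rely in the $RCA_0$ treatment on $\Sigma^0_1$-induction and $\Delta^0_1$-comprehension, whereas $RCA_0^*$ offers only $\Sigma^0_0$-induction augmented by exponentiation. Showing that $\mathcal H$ can still be built and that the appropriate $\omega$-model reflection produces a model containing the required parameter with only what $RCA_0^*$ provides is the real technical hurdle; if it fails, one may need to refine the argument by restricting to $(\Pi^1_1)^-$-formulas whose arithmetical matrix lies in a bounded level and performing a finer analysis of which fragment of $\Pi^1_1\text{--}BI_0$ is actually invoked in the reflection step.
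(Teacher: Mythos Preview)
The statement you are attempting to prove is explicitly labeled a \emph{Conjecture} in the paper; the authors provide no proof and present it precisely as an open problem following the sentence ``There are still some open questions left\dots''. So there is no proof in the paper against which to compare your attempt --- what you have written is a proposed attack on an open problem, not an alternative to an existing argument.

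Judged on its own terms, your sketch is a reasonable heuristic but it contains genuine gaps rather than mere details to fill in. For part (3) you assert that the $\Pi^0_3$ restriction in Theorem~\ref{upper bound RCA_0 + Pi11(Pi03)-CA-} ``was not really needed'' and that the reflection and $\mathcal H$-construction ``go through verbatim'' for an arbitrary arithmetical matrix. That is exactly the content of the conjecture, and the paper's own evidence points the other way: the authors match $\Pi^0_3$ with $\Pi^1_2\text{--}BI_0$ and, in the lemma immediately after, $\Pi^0_2$ with $\Pi^1_1\text{--}BI_0$. This calibration suggests that the level of the arithmetical matrix governs the amount of $\omega$-model reflection and induction required; you would need to check carefully that for a $\Pi^0_k$ matrix the formula $\neg B(A(\cdot),n_0)$ (with $A$ recursive in a $(\Pi^1_1)^-$-complete set) still falls under $\Sigma^1_4$-reflection, and that the graph of $\mathcal H$ stays within reach of the $\Sigma^1_2$-induction available in $\Pi^1_2\text{--}BI_0$. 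Neither is verified in your sketch.

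For parts (1) and (2) the gap is more visible still: you propose to run the template inside $\Pi^1_1\text{--}BI_0$, but the paper needed $\Pi^1_2\text{--}BI_0$ already for the $\Pi^0_3$ matrix over $RCA_0$, so expecting $\Pi^1_1\text{--}BI_0$ to handle the same (part 1) or larger (part 2) matrix class over $RCA_0^*$ is not something you can take for granted. You yourself flag the $RCA_0^*$ obstacle --- that $\Sigma^0_1$-induction is unavailable for building $\mathcal H$ --- and then leave it unresolved. In short, your proposal correctly identifies the lower bounds as already done and correctly locates the template for the upper bounds, but the crucial upper-bound steps are asserted rather than proved, which is precisely why the authors recorded these as conjectures.
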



\section{Independence results}\label{sec:independence results}

\subsection{Independence results for $ACA_0 +  (\Pi^1_1$--\,$CA_0)^{-} $}\label{sec:independence results for ACA_0 +  (Pi^1_1--CA_0)^{-}}

%
%

It is well-known that $\vert  ACA_0 + (\Pi^1_1$--\,$CA_0)^{-}  \vert = \vartheta(\varepsilon_{\Omega+1})$ (for example see \cite{897}).
Now,
\begin{theorem}\label{InACA0: T(B) wpo ->WF(Howard-Bachmann)} $ACA_0 \vdash$ `$\T(\mathcal{B}(\cdot))$ is a $\wpo$' $\rightarrow WF(\vartheta(\varepsilon_{\Omega+1}))$.
\end{theorem}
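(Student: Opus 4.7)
The plan is to transfer non-well-foundedness of the ordinal notation system into badness of an infinite sequence in $\T(\mathcal{B}(\cdot))$ via the quasi-embedding $g$ constructed in Theorem~\ref{computation lower bound W(X)=B(X)}. Working in $ACA_0$, I first observe that the map $g\colon OT(\vartheta) \cap \Omega \to \T(\mathcal{B}(\cdot))$ is primitive recursive: its recursive definition is indexed by $G_\vartheta$-values, which are ordinary natural numbers by Lemma~\ref{ONS for Howard-Bachmann}, and the explicit binary-tree construction $f(\beta)$ only invokes previously defined values $g(\gamma), f(\beta_i)$ whose arguments have strictly smaller $G_\vartheta$-value. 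Consequently the graph of $g$ is $\Delta^0_1$-definable, so $g$ is a function provably available in $ACA_0$.

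Next, I verify that the quasi-embedding property $g(\alpha) \leq_{\T(\mathcal{B}(\cdot))} g(\alpha') \rightarrow \alpha \leq \alpha'$ is provable in $ACA_0$. The argument given in Theorem~\ref{computation lower bound W(X)=B(X)} is stated as an induction on the ordinal $\alpha'$, but inspection of cases (a)--(d) for $g$ and cases (i)--(iv) for $f$ shows that each recursive appeal is made to pairs of ordinals whose $G_\vartheta$-values are strictly smaller. Hence the induction is in fact arithmetical induction on the natural number $G_\vartheta(\alpha) + G_\vartheta(\alpha')$ (or, for the auxiliary claim about $f$, on $G_\vartheta(\delta')$), and all side conditions—inequalities between ordinals in normal form, membership of coefficients in $K(\cdot)$, comparisons via Lemma~\ref{main property theta-function}—are primitive recursive. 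All of this goes through inside $ACA_0$.

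Now, to establish the implication itself, assume in $ACA_0$ that $\T(\mathcal{B}(\cdot))$ is a $\wpo$, and suppose towards contradiction that $<_{OT(\vartheta)}\!\!\upharpoonright \vartheta(\varepsilon_{\Omega+1})$ is not well-founded. In $ACA_0$, well-foundedness of a primitive recursive linear order is equivalent to the non-existence of an infinite descending sequence (use arithmetical comprehension to build the sequence from a non-progressive set, or vice versa). Hence there exists a function $h\colon \omega \to OT(\vartheta) \cap \Omega$ with $h(n+1) <_{OT(\vartheta)} h(n)$ for every $n$. By arithmetical comprehension the sequence $n \mapsto g(h(n))$ exists as an infinite sequence in $\T(\mathcal{B}(\cdot))$. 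The $\wpo$ hypothesis yields indices $i < j$ with $g(h(i)) \leq_{\T(\mathcal{B}(\cdot))} g(h(j))$, and applying the quasi-embedding property of $g$ gives $h(i) \leq h(j)$, contradicting strict descent of $h$.

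The main obstacle I anticipate is the formalization bookkeeping in the second paragraph: one has to spell out that each of the case splits in the proof of Theorem~\ref{computation lower bound W(X)=B(X)} reduces to comparisons and recursive calls whose $G_\vartheta$-values strictly decrease—so that the transfinite-looking induction is really $\Sigma^0_1$-induction on natural numbers. Once this translation is checked, the rest is a routine application of the $\wpo$ assumption to the $ACA_0$-definable sequence $n \mapsto g(h(n))$.
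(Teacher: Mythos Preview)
Your proposal is correct and follows exactly the approach the paper takes: the paper's proof is the single sentence ``Theorem~\ref{computation lower bound W(X)=B(X)} can be carried out in $ACA_0$ if we use the primitive recursive ordinal notation system $OT(\vartheta)\cap\Omega$ for $\vartheta(\varepsilon_{\Omega+1})$,'' and you have simply spelled out the formalization details (primitive recursiveness of $g$, recasting the induction as arithmetical induction on $G_\vartheta$-values, and the standard descending-sequence-to-bad-sequence argument) that this one-liner leaves implicit.
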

\begin{proof}
Theorem \ref{computation lower bound W(X)=B(X)} can be carried out in $ACA_0$ if we use the primitive recursive ordinal notation system $OT(\vartheta) \cap \Omega$ for $\vartheta(\varepsilon_{\Omega+1})$.
   \end{proof}

Therefore, one has the following theorem,
\begin{theorem} $ACA_0 + (\Pi^1_1$--\,$CA_0)^{-} \not\vdash$ `$\T(\mathcal{B}(\cdot))$ is a $\wpo$'.
\end{theorem}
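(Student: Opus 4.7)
The plan is a straightforward application of the preceding Theorem \ref{InACA0: T(B) wpo ->WF(Howard-Bachmann)} combined with the known value of the proof-theoretic ordinal of $ACA_0 + (\Pi^1_1\text{--}\,CA_0)^{-}$. I would argue by contradiction: suppose that $ACA_0 + (\Pi^1_1\text{--}\,CA_0)^{-} \vdash$ `$\T(\mathcal{B}(\cdot))$ is a $\wpo$'. Since Theorem \ref{InACA0: T(B) wpo ->WF(Howard-Bachmann)} is a theorem of $ACA_0$ (and hence \emph{a fortiori} of the stronger theory under consideration), we would obtain
\[ACA_0 + (\Pi^1_1\text{--}\,CA_0)^{-} \vdash WF(\vartheta(\varepsilon_{\Omega+1})),\]
where well-foundedness is formalized with respect to the fixed primitive recursive ordinal notation system $OT(\vartheta)\cap\Omega$ furnished by Lemma \ref{ONS for Howard-Bachmann}.

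Next I would invoke the fact, recalled at the beginning of the section and referenced via \cite{897}, that $|ACA_0 + (\Pi^1_1\text{--}\,CA_0)^{-}| = \vartheta(\varepsilon_{\Omega+1})$. By definition of the proof-theoretic ordinal, this means that $\vartheta(\varepsilon_{\Omega+1})$ is the supremum of the order types of primitive recursive well-orderings whose well-foundedness is provable in $ACA_0 + (\Pi^1_1\text{--}\,CA_0)^{-}$. In particular, the theory cannot prove the well-foundedness of a primitive recursive notation system whose order type equals $\vartheta(\varepsilon_{\Omega+1})$ itself, which is exactly what the displayed assertion above claims. This contradiction completes the proof.

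There is essentially no hard computational step: both nontrivial ingredients (the formalized lower-bound construction of Theorem \ref{computation lower bound W(X)=B(X)} inside $ACA_0$, and the proof-theoretic ordinal of $ACA_0 + (\Pi^1_1\text{--}\,CA_0)^{-}$) are already in place. The only subtlety worth a sentence is making sure that the notation system used in the formalization of Theorem \ref{InACA0: T(B) wpo ->WF(Howard-Bachmann)} is precisely the one witnessing the proof-theoretic ordinal, so that the well-foundedness statement derived from the putative proof of the wpo-property coincides with the one that is known to be unprovable; this is immediate from Lemma \ref{ONS for Howard-Bachmann}, which yields a primitive recursive coding of $OT(\vartheta)\cap\Omega$ of order type $\vartheta(\varepsilon_{\Omega+1})$.
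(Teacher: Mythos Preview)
Your proposal is correct and follows exactly the paper's intended argument: the paper simply writes ``Therefore, one has the following theorem,'' deriving it from Theorem~\ref{InACA0: T(B) wpo ->WF(Howard-Bachmann)} together with the known value $\vert ACA_0 + (\Pi^1_1\text{--}\,CA_0)^{-}\vert = \vartheta(\varepsilon_{\Omega+1})$ cited from \cite{897}. Your additional remark about matching the notation system via Lemma~\ref{ONS for Howard-Bachmann} is a welcome clarification of a detail the paper leaves implicit.
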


The $\wpo$ $\T(\mathcal{B}(\cdot))$ can be seen as the limit of $\T(\cdot^{\overbrace{*\dots*}^{n}})$, because a binary tree can be seen as an iteration of the ${}^*$-operator. For example, one can interpret an element of ${\{a\}}^{**}$ as a binary tree which goes only one time to the left and every node has label $a$.
So, it would be interesting if $ACA_0 + (\Pi^1_1$--\,$CA_0)^{-} \vdash `\T(\cdot^{\overbrace{*\dots*}^{n}})$ is a $\wpo$' for every natural number $n$ because this theory does not prove the `limit'.\\
The maximal order type of $\T(\cdot^{*\dots*})$ is strictly below the proof-theoretical ordinal of $ACA_0 + (\Pi^1_1$--\,$CA_0)^- $. Therefore, one can really expect this provability assertion. However, in the proof of the upper bound of the maximal order type of $\T(\cdot^{*\dots*})$ one uses several induction schemes and it is not immediately clear that the proof goes through in $ACA_0 + (\Pi^1_1$--\,$CA_0)^- $. Theorem \ref{provability in Pi11CA0 and ACA} shows that this is indeed possible and uses the so-called minimal bad sequence argument. For more information about the minimal bad sequence argument and its reverse mathematical strength, we refer the reader to \cite{onthelogicalstrengthofnashwilliamstheoremontransfinitesequences}.

\begin{lemma}\label{star} Over $RCA_0$, the following are equivalent
\begin{enumerate}
\item $ACA_0$,
\item Higman's theorem, i.e. $\forall X(X$ is a $\wpo$ $\rightarrow$ $X^*$ is a $\wpo)$.
\item $\forall X(X$ is a well-quasi-order $\rightarrow$ $X^*$ is a well-quasi-order$)$.
\end{enumerate}
\end{lemma}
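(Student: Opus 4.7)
The plan is to establish the cycle $(1) \Rightarrow (2) \Rightarrow (3) \Rightarrow (1)$. The step $(2) \Rightarrow (3)$ will be the routine one: given a well-quasi-order $(X,\leq)$, form the antisymmetric quotient $X/{\sim}$, where $x \sim y$ iff $x \leq y$ and $y \leq x$. This quotient is available in $RCA_0$ and is a wpo whenever $X$ is a wqo. The projection $X \to X/{\sim}$ lifts to a quasi-embedding $X^* \to (X/{\sim})^*$; applying (2) to the quotient wpo and lifting back via this quasi-embedding (as in Lemma \ref{quasi-embedding}) yields that $X^*$ is a wqo.

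For $(1) \Rightarrow (2)$ the plan is to formalize the Nash-Williams minimal bad sequence argument inside $ACA_0$. Fix a wpo $X$ and assume, toward contradiction, that $X^*$ admits a bad sequence. Using arithmetic comprehension, I would recursively select $(s_n)_{n \in \mathbb{N}}$ in $X^*$ by taking $s_n$ of minimal length such that the finite sequence $(s_0,\ldots,s_n)$ extends to some infinite bad sequence; the predicate ``extends to an infinite bad sequence'' is arithmetical once $X$ and $X^*$ are given as primitive recursive sets of codes. Each $s_n$ is nonempty, so write $s_n = a_n \cdot t_n$ with $a_n \in X$ and $t_n \in X^*$. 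By the wpo property of $X$, and a further use of arithmetic comprehension, I would extract an infinite weakly increasing subsequence $(a_{n_k})_{k \in \mathbb{N}}$. The concatenated sequence $s_0,\ldots,s_{n_0-1},t_{n_0},t_{n_1},\ldots$ would then be a bad sequence in $X^*$ whose $n_0$-th entry is strictly shorter than $s_{n_0}$, contradicting the minimal choice. The principal obstacle here will be justifying that this minimisation really is available with only arithmetic comprehension rather than $\Pi^1_1$-choice; the point is that the minimum is taken over lengths in $X^*$ and that the existence of a bad extension at stage $n+1$ can be witnessed arithmetically from the witness at stage $n$ together with the originally assumed bad sequence.

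For $(3) \Rightarrow (1)$, the plan is a Friedman-style reversal. Over $RCA_0$ it suffices to produce the range of an arbitrary injection $f:\mathbb{N}\to\mathbb{N}$, since this gives $\Sigma^0_1$-comprehension and hence $ACA_0$. I would associate with $f$ a primitive-recursively presented quasi-order $(X,\leq_X)$ on $\mathbb{N}$ that $RCA_0$ already proves to be a wqo, but in such a way that the wqo-ness of $X^*$ supplied by (3) decodes membership in $\operatorname{ran}(f)$ through a bounded search on two- or three-element sequences. This follows the established template for reverse-mathematical treatments of Higman's lemma over general well-quasi-orders. The hard part will be calibrating the construction: too simple an $X$ (a finite alphabet, a discrete order, $\omega$ with the trivial order) makes Higman's lemma provable well below $ACA_0$, while too rich an $X$ is not itself provably a wqo in $RCA_0$. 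The quasi-order has to encode the enumeration of $f$ just finely enough that passing to $X^*$ forces information that $RCA_0$ alone cannot compute.
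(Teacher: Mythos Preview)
Your $(1)\Rightarrow(2)$ has a genuine gap. The predicate ``$(s_0,\ldots,s_n)$ extends to an infinite bad sequence in $X^*$'' is \emph{not} arithmetical: it begins with an existential second-order quantifier over the extending sequence, so it is properly $\Sigma^1_1$. Your attempted repair --- carrying along a single witnessing bad sequence from the previous stage --- does not survive the minimisation step: once you replace $s_n$ by a strictly shorter $s_n'$ that still admits a bad extension, that extension need bear no relation to the witness you had for $(s_0,\ldots,s_{n-1})$, so you cannot manufacture it arithmetically from the old data. This is not a technicality that more bookkeeping will fix; Marcone (cited in this paper as \cite{onthelogicalstrengthofnashwilliamstheoremontransfinitesequences}) shows that the minimal bad sequence principle in this form is equivalent over $RCA_0$ to $\Pi^1_1$-$CA_0$, which is far beyond $ACA_0$.

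The paper avoids this entirely by routing through well-ordering principles: it invokes the Girard--Hirst result that, over $RCA_0$, $ACA_0$ is equivalent to ``$\alpha$ well-ordered $\Rightarrow$ $2^\alpha$ well-ordered'', and then the equivalence of that principle with Higman's theorem (for which it cites Clote). The forward direction thus never touches a minimal bad sequence, and the reversal $(3)\Rightarrow(1)$ is obtained by specialising Higman to a well-order $\alpha$ and reading off the well-ordering of $2^\alpha$ from the wpo $\alpha^*$, rather than by the direct range-of-an-injection coding you sketch. Your reversal plan might be made to work, but as you yourself note the calibration is delicate, and you give no concrete construction; the well-ordering route is both standard and already in the literature. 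Your $(2)\Leftrightarrow(3)$ via the antisymmetric quotient is fine and matches the paper's ``trivial''.
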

\begin{proof}
It is trivial to show that (2.) and (3.) are equivalent. In \cite{girard87} and \cite{reversemathematicsandordinalexponentiation}, the reader finds a proof of the fact that $ACA_0$ is equivalent over $RCA_0$ with $\forall \alpha(\alpha$ is a well-order $\rightarrow$ $2^\alpha$ is a well-order$)$. One can prove that the latter is equivalent with Higman's theorem. For a detailed version see \cite{clote90}. 
 
\end{proof}

\begin{theorem}\label{provability in Pi11CA0 and ACA} For all $n$, $ACA_0 + (\Pi^1_1$--\,$CA_0)^-\vdash$ `$\T(\cdot^{\overbrace{*\dots*}^{n}})$ is a $\wpo$'.
\end{theorem}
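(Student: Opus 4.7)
The plan is to prove the statement by meta-level induction on $n$, reducing each fixed case to a minimal-bad-sequence argument in the spirit of Nash--Williams, carried out within $ACA_0 + (\Pi^1_1\text{--}\,CA_0)^-$.

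Fix $n$ and write $W_n := \cdot^{\overbrace{*\dots*}^{n}}$. Arguing in $ACA_0 + (\Pi^1_1\text{--}\,CA_0)^-$, I would suppose for contradiction that $\T(W_n)$ is not a $\wpo$ and then use lightface $\Pi^1_1$-comprehension to form the set
\[
B := \{ \sigma \in \T(W_n)^{<\omega} : \sigma \text{ is bad and extends to an infinite bad sequence in } \T(W_n)\}.
\]
The defining formula of $B$ is $\Sigma^1_1$ with only the fixed primitive-recursive presentation of $(\T(W_n), \leq_{\T(W_n)})$ as data, and in particular carries no free set parameters. From $B$ I would then define a minimal bad sequence $(t_i)_{i<\omega}$ by arithmetic recursion in $B$: having chosen $t_0, \dots, t_{i-1}$ with $(t_0, \dots, t_{i-1}) \in B$, let $t_i$ be the $t$ of smallest primitive-recursive rank (with numerical tie-breaking) such that $(t_0, \dots, t_{i-1}, t) \in B$. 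Such a $t$ exists because $(t_0,\dots,t_{i-1}) \in B$.

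Writing each $t_i = \circ[w_i(t_{i,1}, \dots, t_{i,k_i})]$, set $Y := \{t_{i,j} : i<\omega,\, 1 \leq j \leq k_i\} \subseteq \T(W_n)$ with the inherited ordering. A standard splicing argument shows $Y$ is a $\wpo$: given a supposed bad sequence $(s_k)$ in $Y$, write $s_k = t_{i_k, j_k}$ and choose $k_0$ so that $i_{k_0}$ is minimal among the $i_k$. Then the spliced sequence $t_0, \dots, t_{i_{k_0}-1}, s_{k_0}, s_{k_0+1}, \dots$ is bad, since for any $j < i_{k_0}$ and $k \geq k_0$ an inequality $t_j \leq s_k$ would give $t_j \leq t_{i_k}$ via clause~2 of $\leq_{\T(W_n)}$, contradicting the original badness because $j < i_{k_0} \leq i_k$. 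Since $\rho(s_{k_0}) < \rho(t_{i_{k_0}})$, the spliced sequence violates the minimality of $(t_i)$. With $Y$ a $\wpo$ in hand, $n$ iterations of Higman's theorem (Lemma~\ref{star}, each step provable in $ACA_0$) yield that $W_n(Y) = Y^{\overbrace{*\dots*}^{n}}$ is a $\wpo$, so some $i<j$ satisfy $\times t_i \leq_{W_n(Y)} \times t_j$; since the ordering on $Y$ is the restriction of that on $\T(W_n)$, this upgrades to $\times t_i \leq_{W_n(\T(W_n))} \times t_j$, and clause~3 of $\leq_{\T(W_n)}$ delivers $t_i \leq_{\T(W_n)} t_j$, contradicting the badness of $(t_i)$.

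The hard part will be verifying that the minimal-bad-sequence construction is genuinely parameter-free: the full Nash--Williams argument for universally quantified $\wpo$'s is known to require boldface $\Pi^1_1$-comprehension, but here $\T(W_n)$ is a single fixed primitive recursive poset, so the defining formula of $B$ uses only numerical parameters drawn from the finite prefix and lightface $\Pi^1_1$-comprehension suffices. A careful check is also needed that, at each step of the recursion, the predicate ``$(t_0, \dots, t_{i-1}, t) \in B$'' is handled as an arithmetic condition on $t$ with $B$ as set parameter and the prefix as numerical parameter, so no further set comprehension intervenes beyond the single one forming $B$.
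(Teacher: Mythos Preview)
Your proposal is correct and follows essentially the same Nash--Williams minimal-bad-sequence argument as the paper: form the set of extendable bad prefixes via lightface $\Pi^1_1$-comprehension (legitimate because $\T(W_n)$ is a fixed primitive recursive poset), extract a minimal bad sequence, show the collection of its immediate subterms is a $\wpo$ by a splicing argument, and then apply Higman's lemma $n$ times inside $ACA_0$ to reach a contradiction. The only differences are presentational: the paper builds the minimal sequence in one shot by comprehending on $\chi(\sigma)\wedge\psi(\sigma)$ (extendability \emph{and} minimality) rather than forming your $B$ and then recursing, and it works with indexed pairs $(i,t)$ in a quasi-order $subS$ (requiring an extra pass to an increasing-index subsequence) where you use the plain subset $Y$ and a single choice of $k_0$ with minimal $i_{k_0}$.
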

\begin{proof} Fix a natural number $n$. We reason in $ACA_0 + (\Pi^1_1$--\,$CA_0)^{-}$. Code the elements of $\T(\cdot^{\overbrace{*\dots*}^{n}})$ as natural numbers such that $\circ$ has code $0$ and the code of $t_i$ is strictly smaller than the code of $t = \circ[w(t_1,\dots,t_i,\dots, t_n)]$. This coding can be done primitive recursively. We say that the leaves of $\circ[w(t_1,\dots,t_i,\dots, t_n)]$ are $\{t_1,\dots,t_n\}$ and we assume that there is a primitive recursive relation `\dots is a leaf of \dots'. 
If $\sigma$ decodes a finite sequence, then this sequence is equal to $((\sigma)_0,\dots,(\sigma)_{lh(\sigma)-1})$, where $lh(\sigma)$ is the length and $(\sigma)_i$ is the $i$-th element of the sequence. An infinite sequence $(\sigma_i)_{i< \omega}$ is decoded as a set $\{(i,\sigma_i): i< \omega\}$. $\left( \{(i,\sigma_i): i< \omega\} \right)_i$ stands for the element $\sigma_i$. Note that one can recursively construct the set $\{(\sigma_0,\dots, \sigma_i) : i<\omega\}$ from the original set $\{(i,\sigma_i): i< \omega\}$, where $(\sigma_0,\dots, \sigma_i) $ is decoded as a natural number. Furthermore, if one has the set $\{(\sigma_0,\dots, \sigma_i) : i<\omega\}$, one can reconstruct the original set $\{(i,\sigma_i): i< \omega\}$ from it in a recursive way.

\medskip 

Now, assume that $\T(\cdot^{*\dots *})$ is not a well-partial-order. Then there exists an infinite sequence $(t_i)_{i<\omega}$ in $\T(\cdot^{*\dots *})$ such that $\forall i,j (i<j \rightarrow t_i \not\leq t_j)$. Define $\chi(\sigma)$ as
\begin{align*}
{}&{}\text{$\sigma$ is a finite sequence of elements in $\T(\cdot^{*\dots *})$}\\
 \text{and }{}&{} \exists Z( \text{$Z$ is an infinite bad sequence in $\T(\cdot^{*\dots *})$} \wedge \forall i < lh(\sigma) ((\sigma)_i=(Z)_i)),
\end{align*}
and $\psi(\sigma)$ as  
\begin{align*}
&\text{$\sigma$ is a finite sequence of elements in $\T(\cdot^{*\dots *})$}\\
 \text{and }& \forall Y \left[\left( \text{$Y$ is an infinite bad sequence in $\T(\cdot^{*\dots *})$}\right) \right.\\
& \left. \rightarrow \forall i < lh(\sigma) \left[\forall k < i  \left((\sigma)_k =(Y)_k\right)\rightarrow (\sigma)_i \leq (Y)_i\right]\right],
\end{align*}
Note that $(\sigma)_i \leq (Y)_i$ is interpreted as the inequality relation between natural numbers and not between elements of $\T(\cdot^{*\dots* })$. 
Using $(\Pi^1_1$--\,$CA_0)^-$, there exists a set $S$ such that $\sigma \in S \leftrightarrow \chi(\sigma) \wedge \psi(\sigma)$.
Choose now two arbitrary elements $s$, $s'$ in $S$. We want to prove that either $s$ is an initial segment of $s'$ or $s'$ an initial segment of $s$. Assume there is an index $i <  \min\{lh(s),lh(s')\}$ such that $(s)_i < (s')_i$ and $\forall k < i$, $(s)_k = (s')_k$. The case $(s)_i > (s')_i$ can be handled in a similar way. Note that $(s)_i < (s')_i$ is seen as an inequality between natural numbers and not between elements of $\T(\cdot^{*\dots * })$. Because $s$ is in $S$, $s$ can be extended to an infinite bad sequence $Y$ in $\T(\cdot^{*\dots*})$. This, however, contradicts $\psi(s')$ because $(Y)_k = (s)_k =  (s')_k$ for all $k<i$, but $(Y)_i = (s)_i < (s')_i$.\\
If $s\in S$, one can show by minimization in $RCA_0$, that there is a $z \in \T(\cdot^{*\dots *})$ such that $s^{\frown} (z) \in S$. Therefore, there exists an infinite sequence $(s_i)_{i< \omega}$ in $\T(\cdot^{*\dots *})$ such that $S = \{(s_0,\dots,s_i) : i < \omega\}$. 


\medskip

Now, define $subS$ as the set of all pairs $(i,t)$ such that $t$ is a leaf of $s_i$. Remark that $subS$ is definable in $RCA_0$, because
\begin{align*}
(i,t) \in subS & \Leftrightarrow \exists \sigma ( \sigma \in S  \text{ and } lh(\sigma) = i+1 \text{ and $t$ is a leave of $\sigma_i$})\\
& \Leftrightarrow \forall \sigma (( \sigma \in S \text{ and } lh(\sigma)=i+1 ) \rightarrow \text{$t$ is a leave of $\sigma_i$}). 
\end{align*}
On $subS$, we define the following ordering: $(i,t) \leq (j,t') \Leftrightarrow t \leq_{\T(\cdot^{*\dots*})} t'$. With this ordering $subS$ is a quasi-order. 
We want to prove that it is a well-quasi-order.

\medskip 

Assume that this is not true. This implies the existence of an infinite bad sequence $((n_i,s'_i))_{i<\omega}$ in $subS$. This implies $s'_i \leq s_{n_i}$ for all $i$.
Construct now an infinite subsequence $H$ such that $(n_i)_{i<\omega}$ is strictly increasing and the first element of $H$ is $(n_0,s'_0)$. So $H = \{(i,(n_{j_i},s'_{j_i})) : i<\omega\}$ with $j_0=0$.
This is possible in $RCA_0$, because the number of leaves of an element in $\T(\cdot^{*\dots*})$ is finite.\\
Construct now recursively a set $S'$ such that all elements that lie in $S'$ have the form $(\sigma_0,\dots,\sigma_i)$ such that if $i<n_0$, the element $\sigma_i$ is equal to $s_i$ and if $i\geq n_0$, then $\sigma_i$ is equal to $s'_{j_{i-n_0}}$. 
The existence of $S'$ is however in contradiction with the definition of $S$. This is because $\psi((s_0,\dots,s_{n_0}))$, $s'_{j_0} = s'_0 < s_{n_0}$ as a natural number and $(\sigma_i)_{i<\omega}$ is an infinite bad sequence. 
Take for example $k<l<\omega$. If $k<l<n_{0}$, then $\sigma_k \not\leq \sigma_l$ follows from $s_k \not\leq s_l$. If $n_{0}\leq k <l$, then $\sigma_k \not\leq \sigma_l$ follows from $s'_{j_{k-n_0}} \not\leq s'_{j_{l-n_0}}$. Assume $k < n_{0} \leq l$. Then $\sigma_k \not\leq \sigma_l$ follows from the fact that otherwise $s_k \leq  s'_{j_{l-n_{0}}} \leq s_{n_{j_{l-n_{0}}}}$ with $k<n_{0}=n_{j_0} \leq n_{j_{l-n_{0}}}$, a contradiction. So we conclude that $subS$ is indeed a well-quasi-order. 

\medskip

By Lemma \ref{star}, we obtain that $(subS)^{*\dots *}$ is a well-quasi-ordering. Now, look at the infinite sequence $(s_i)_{i<\omega}$ in $\T(\cdot^{*\dots*})$. Rewrite every element $\times s_i$ to an element in $(subS)^{*\dots *}$ and call it $\overline{s_i}$. For example, if $s_i=\circ\left(\left(s^1_1,\dots,s^1_{n_1}\right),\dots,\left(s^k_1,\dots,s^k_{n_k}\right)\right)$, then $\overline{s_i}$ is equal to $\left(\left((i,s^1_1),\dots,(i,s^1_{n_1})\right),\dots,\left((i,s^k_1),\dots,(i,s^k_{n_k})\right)\right)$. Because we know that $(subS)^{*\dots *}$ is a well-quasi-order, there exist two indices $i<j$ such that $\overline{s_i} \leq_{(subS)^{*\dots *}} \overline{s_j}$. Therefore, $s_i \leq_{\T(\cdot^{*\dots*})}s_j$, a contradiction.
 
\end{proof}

Remark that the previous proof can actually be simplified. We wrote it however in this way, so that the reader can see what is needed to prove the well-partial-orderedness of $\T(W)$: the proof can be carried out in $RCA_0 + (\Pi^1_1(\Pi^0_3)$--\,$CA_0)^-+ \forall X (X$ is a $\wpo \rightarrow  X^*$ is a $\wpo$). This theory is the same as $ACA_0 + (\Pi^1_1$--\,$CA_0)^-$.
A general guideline is that for a natural theory $T\supseteq RCA_0$ with $RCA_0 \vdash T \leftrightarrow \forall X (X$ is a $\wpo  \rightarrow W(X)$ is a $\wpo)$, the following holds
\[T+ (\Pi_1^1(\Pi^0_3)\text{--}\,CA_0)^- \vdash \T(W)\text{ is a $\wpo$}\]
and 
\[T+ (\Pi_1^1(\Pi^0_3)\text{--}\,CA_0)^- \not\vdash \T(W')\text{ is a $\wpo$},\]
where $W'$ is the `limit' of $W$. The notion `limit' is of course very vague. For example a binary tree in $\mathcal{B}(X)$ can be seen in some sense as an element of $X^{*\dots*}$ and a finite sequence in $X^*$ can be seen as an element of $X^n$. Therefore, $\mathcal{B}(\cdot)$ is the limit of $\cdot^{*\dots*}$ and $\cdot^*$ is the limit of $\cdot^n$. We do not give a specific definition of the notion `limit' as this is only a guideline and not a real theorem.

\subsection{Independence results for $RCA_0 + (\Pi^1_1(\Pi^0_3)$--\,$CA_0)^{-}$}\label{sec:independence results for RCA_0 + (Pi^1_1(Pi^0_3)--CA_0)^{-}}
We know that $\vert RCA_0 + (\Pi^1_1(\Pi^0_3)$--\,$CA_0)^{-}\vert = \vartheta(\Omega^\omega)$. Furthermore, one can easily show as in Theorem \ref{computation lower bound W(X)=B(X)} that
$RCA_0 \vdash \T(X^*)$ is a $\wpo$ $\rightarrow WF(\vartheta(\Omega^\omega))$. Therefore, $RCA_0 + (\Pi^1_1(\Pi^0_3)$--\,$CA_0)^{-} \not\vdash$ `$\T(X^*)$ is a $\wpo$'.

\medskip 

Again, as in the $ACA_0$-case, one can search for provability results: $\T(X^*)$ can be seen as the limit of $\T(X^n)$. Therefore, it would be interesting if $RCA_0 +(\Pi^1_1(\Pi^0_3)$--\,$CA_0)^{-} \vdash$ `$\T(X^n)$ is a $\wpo$'. However, the theory $RCA_0$ does not prove $\forall X$(`$X$ is a $\wpo$' $\rightarrow$ `$X^n$ is a $\wpo$') (see \cite{Reversemathematicsandtheequivalenceofdefinitionsforwellandbetterquasiorders} for more information). 
The theorem $\forall X$(`$X$ is a $\wpo$' $\rightarrow$ `$X^n$ is a $\wpo$') is however provable in $RCA_0 + CAC$, where $CAC$ is the principle saying that every infinite sequence in a partial order has a subsequence that is either a chain or an antichain.
This implies that $RCA_0 +CAC+(\Pi^1_1(\Pi^0_3)$--\,$CA_0)^{-} \vdash$ `$\T(X^n)$ is a $\wpo$'.

\renewcommand{\abstractname}{Acknowledgements}

\begin{abstract}
The first author wants to thank his funding organization Fellowship of the Research Foundation - Flanders (FWO). 
The second author's research was made possible through the support of a grant from the John Templeton Foundation. The opinions expressed in this publication are those of the authors and do not necessarily reflect the views of the John Templeton Foundation.
The authors also wish to thank Leszek Ko\l odziejczyk for his helpful comments and Ryota Akiyoshi for his fruitful discussions with the first author.
\end{abstract}

\bibliographystyle{spmpsci}      
\bibliography{Universal}

\end{document}